\documentclass[12pt]{amsart}

\usepackage{amssymb, amsmath, amsthm}
\usepackage[margin=1in]{geometry}
\usepackage[dvipdfmx]{graphicx,xcolor} 
\usepackage{tikz}

\allowdisplaybreaks

\numberwithin{equation}{section}

\theoremstyle{plain}
\newtheorem{thm}{Theorem}[section]
\newtheorem{prop}[thm]{Proposition}
\newtheorem{lem}[thm]{Lemma}
\newtheorem{cor}[thm]{Corollary}
\newtheorem*{referthmA}{Theorem A}
\newtheorem*{referthmB}{Theorem B}

\newtheorem*{rem*}{Remark}
\theoremstyle{definition}
\newtheorem{defn}[thm]{Definition}
\newtheorem{rem}[thm]{Remark}


\newcommand{\N}{\mathbb{N}}
\newcommand{\R}{\mathbb{R}}

\newcommand{\Z}{\mathbb{Z}}

\newcommand{\calA}{\mathcal{A}}

\newcommand{\calF}{\mathcal{F}}

\newcommand{\calS}{\mathcal{S}}

\newcommand{\supp}{\mathrm{supp}\, }
\newcommand{\op}{\mathrm{Op}}

\newcommand{\ichi}{\mathbf{1}}

\newcommand{\veck}{\boldsymbol{k}}
\newcommand{\vecm}{\boldsymbol{m}}
\newcommand{\vecs}{\boldsymbol{s}}

\newcommand{\vecz}{\boldsymbol{z}}
\newcommand{\vecxi}{\boldsymbol{\xi}}

\newcommand{\vecnu}{\boldsymbol{\nu}}
\newcommand{\vecmu}{\boldsymbol{\mu}}
\newcommand{\vectau}{\boldsymbol{\tau}}
\newcommand{\vecell}{\boldsymbol{\ell}}

\newcommand{\II}{I\hspace{-3pt}I}


\begin{document}
\title[$S_{0,0}$ class symbols of limited smoothness]
{Multilinear pseudo-differential operators with 
$S_{0,0}$ class symbols of limited smoothness} 

\author[T. Kato]{Tomoya Kato}

\address[T. Kato]
{Division of Pure and Applied Science, 
Faculty of Science and Technology, Gunma University, 
Kiryu, Gunma 376-8515, Japan}

\email[T. Kato]{t.katou@gunma-u.ac.jp}

\date{\today}

\keywords{Multilinear pseudo-differential operators,
multilinear H\"ormander symbol classes, 
local Hardy spaces, 
Wiener amalgam spaces}
\thanks{This work was supported by JSPS KAKENHI, 
Grant Numbers 20K14339.}
\subjclass[2020]{35S05, 42B15, 42B35}

\begin{abstract}
We consider the boundedness of 
the multilinear pseudo-differential operators 
with symbols in the multilinear H\"{o}rmander class $S_{0,0}$.
The aim of this paper is to
discuss smoothness conditions for symbols to 
assure the boundedness between local Hardy spaces.
\end{abstract}

\maketitle

\section{Introduction}\label{secIntro}

First of all, 
the letter $N$ which is mentioned in this article 
is understood to be a positive integer
unless the contrary is explicitly stated.

For a bounded measurable function 
$\sigma = \sigma (x, \xi_1, \dots, \xi_N)$ on $(\R^n)^{N+1}$, 
the ($N$-fold) multilinear pseudo-differential operator $T_{\sigma}$ 
is defined by  
\[
T_{\sigma}(f_1,\dots,f_N)(x)
=\frac{1}{(2\pi)^{Nn}}
\int_{(\R^n)^N}e^{i x \cdot(\xi_1+\dots+\xi_N)}
\sigma (x, \xi_1, \dots, \xi_N) 
\prod_{j=1}^{N} \widehat{f_j}(\xi_j)
\, d\xi_1 \dots d\xi_N
\]
for $x\in \R^n$ and 
$f_1,\dots,f_N \in \calS(\R^n)$.
The function $\sigma$ is called the symbol of the operator 
$T_{\sigma}$.

The subject of the present paper is 
to investigate the boundedness of 
the multilinear pseudo-differential operators on
several function spaces.
In stating this, 
we use the following terminology with a slight abuse. 
Let $X_1,\dots, X_N$, and $Y$ be function spaces on $\R^n$ 
equipped with quasi-norms 
$\|\cdot \|_{X_j}$ and $\|\cdot \|_{Y}$, 
respectively.  
If there exists a constant $C$
such that
\begin{equation}\label{boundedness-XjY}
\|T_{\sigma}(f_1,\dots, f_N)\|_{Y}
\le C 
\prod_{j=1}^{N} \|f_j\|_{X_j}, \quad
f_j\in \calS \cap X_j, \quad
j=1,\dots,N,
\end{equation}
then we say that 
$T_{\sigma}$ is bounded from 
$X_1 \times \cdots \times X_N$ to $Y$. 
The smallest constant $C$ of 
\eqref{boundedness-XjY} 
is denoted by 
$\|T_{\sigma}\|_{X_1 \times \cdots \times X_N \to Y}$. 
If $\calA$ is a class of symbols,  
we denote by $\mathrm{Op}(\calA)$
the class of all operators $T_{\sigma}$ 
corresponding to $\sigma \in \calA$. 
If $T_{\sigma}$ is bounded from 
$X_1 \times \cdots \times X_N$ to $Y$ 
for all $\sigma \in \calA$, 
then we write 
$\mathrm{Op}(\calA) 
\subset B (X_1 \times \cdots \times X_N \to Y)$. 
For the spaces $X_{j}$ and $Y$,
we consider the Lebesgue space $L^p$,
the Hardy space $H^p$,
the local Hardy space $h^p$, 
and the spaces
$BMO$ and $bmo$.
The definitions of these spaces will be collected
in Subsection \ref{subsecFunctionSpaces}.

Notice that, if $T_{\sigma}$ is bounded from
$X_1 \times \cdots \times X_N$ to $Y$ 
in the sense given above, 
then, in many cases,  
we can extend the definition of 
$T_{\sigma}$ defined for $f_j \in \calS (\R^n)$
to that for general $f_j \in X_j$
and can prove that 
\eqref{boundedness-XjY} holds 
for all $f_j \in X_j$ using some limiting argument.

In this article, we focus on 
the H\"ormander symbol class of $S_{0,0}$-type.
We recall that
the class $S_{0,0}^{m} (\R^n,N)$, $m \in \R$,
consists of all smooth functions 
$\sigma$ on $(\R^n)^{N+1}$ such that
\[
| \partial^{\alpha_0}_x \partial^{\alpha_1}_{\xi_1} \cdots \partial^{\alpha_N}_{\xi_N} 
\sigma(x,\xi_1,\dots,\xi_N) |
\le C_{\alpha_0,\alpha_1,\dots,\alpha_N}
( 1 + |\xi_1| +\cdots + |\xi_N| )^{m}
\]
holds for all multi-indices 
$\alpha_0, \alpha_1, \dots, \alpha_N \in (\N_0)^n
= (\{ 0,1,2,\dots\})^{n}$.
The linear case, $N=1$,
is the widely known H\"ormander class 
and the following is a classical boundedness result:

\begin{referthmA}
Let $0 < p \le \infty$ and $m\in \R$. 
Then, the boundedness
\begin{equation*}
\op ( S^{m}_{0,0}(\R^n, 1) ) \subset
B(h^{p}  \to h^{p})
\end{equation*}
holds
if and only if 
\begin{equation*}
m \le 
\min \Big\{ \frac{n}{p}, \frac{n}{2} \Big\} 
- \max \Big\{ \frac{n}{p}, \frac{n}{2} \Big\},    
\end{equation*}
where, if $p=\infty$,
$h^{p}$ should be replaced by $bmo$.
\end{referthmA}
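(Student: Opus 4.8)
For the sufficiency the plan is the following. Since $\langle\xi\rangle^{m}\le\langle\xi\rangle^{m'}$ whenever $m\le m'$, the inclusion $S^{m}_{0,0}(\R^n,1)\subset S^{m'}_{0,0}(\R^n,1)$ holds, so it is enough to treat the critical order $m=m_p:=\min\{n/p,n/2\}-\max\{n/p,n/2\}=-n\,|1/p-1/2|$; that is, to prove $\op(S^{m_p}_{0,0}(\R^n,1))\subset B(h^p\to h^p)$ for each $p\in(0,\infty]$. The whole range would be reduced to two anchor estimates: the Calder\'on--Vaillancourt bound $\op(S^{0}_{0,0}(\R^n,1))\subset B(L^2\to L^2)$ at $p=2$, and $\op(S^{-n/2}_{0,0}(\R^n,1))\subset B(bmo\to bmo)$ at $p=\infty$. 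Granting these, the endpoint $p=1$ follows by duality, since $\op(S^{-n/2}_{0,0})$ is stable under passage to the formal adjoint and $(h^{1})^{*}=bmo$ (alternatively it can be obtained directly from the atomic decomposition of $h^1$), and the intermediate ranges $1<p<2$ and $2<p<\infty$ then follow by complex interpolation along the scale $h^1,\,L^2,\,bmo$, applied to the analytic family of operators with symbols $\langle\xi\rangle^{z}\sigma(x,\xi)$. This is legitimate because on each vertical line $\{\Re z=a\}$ one has $\langle\xi\rangle^{z}\sigma\in S^{m_p+a}_{0,0}(\R^n,1)$ with seminorms of at most polynomial growth in $|\Im z|$, and because $m_p$ is affine in $1/p$ separately on $(0,2]$ and on $[2,\infty]$, so that the interpolation parameter attached to a given $p$ returns exactly the original symbol at the critical order.

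The remaining range $0<p<1$, where interpolation with the correct orders is unavailable, would be handled through the atomic decomposition $f=\sum_i\lambda_i a_i$ with $\bigl(\sum_i|\lambda_i|^p\bigr)^{1/p}\lesssim\|f\|_{h^p}$ and $a_i$ an $h^p$-atom. By the $p$-subadditivity of $\|\cdot\|_{h^p}^{p}$ the matter reduces to the uniform estimate $\|T_\sigma a\|_{h^p}\lesssim 1$ over all $h^p$-atoms $a$, which one proves by splitting $T_\sigma a$ into a part concentrated near the ball carrying $a$ — controlled by H\"older's inequality together with the $L^2$ bound for $S^0_{0,0}$ — and a tail part, for which the (very negative) critical order is used, after the frequency decomposition described below, to produce the molecular estimates placing $T_\sigma a$ in $h^p$.

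The real content lies in the two anchor estimates, and here the defining feature of the class $S_{0,0}$ is the source of the difficulty: differentiation in $\xi$ yields no gain, so the integration-by-parts in the frequency variables that produces off-diagonal kernel decay for $S_{1,0}$-type symbols is unavailable, and the boundedness cannot be read off from pointwise kernel bounds. The substitute is a time-frequency (Gabor / Wiener amalgam) decomposition of the symbol: with smooth partitions of unity $\sum_{\nu\in\Z^n}\psi(\xi-\nu)\equiv1$ and $\sum_{\mu\in\Z^n}\varphi(x-\mu)\equiv1$, one writes $\sigma=\sum_{\mu,\nu}\sigma_{\mu,\nu}$ with $\sigma_{\mu,\nu}(x,\xi)=\sigma(x,\xi)\varphi(x-\mu)\psi(\xi-\nu)$, so that each $T_{\sigma_{\mu,\nu}}$ is an elementary operator built from a modulation, a translation and a smooth multiplier supported on a unit cube, of operator size $\lesssim\langle\nu\rangle^{m}$, and the family $\{T_{\sigma_{\mu,\nu}}\}$ has almost-orthogonality in $(\mu,\nu)$ coming from the disjointness of the cubes together with the rapid decay in $\mu$ of the $x$-Fourier transform $\widehat\sigma(\cdot,\xi)$. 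The $L^2$ bound is then a Cotlar--Stein estimate. For the $bmo$ bound one must in addition track how the elementary pieces act (equivalently, how their transposes act on $h^1$-atoms): the point is that the $L^2$ operator norm of the frequency-unit-cube piece near frequency $\nu$ is of size $\langle\nu\rangle^{m}$, and the square sum of these norms over the $\sim R^{n}$ cubes lying at frequency $\sim R$ stays bounded as $R\to\infty$ precisely when $m\le -n/2$ — which is where the exponent $-n/2$ enters. I expect the bulk of the work to be exactly this: carrying out the summation over the frequency lattice with the sharp exponent, and verifying the uniform molecular (respectively $BMO$) estimates for the elementary operators; the Wiener amalgam space formalism is the natural device for organizing it, and once the two anchors are established the rest of the sufficiency is routine.

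For the necessity one would exhibit, for each $m>m_p$, a symbol in $S^{m}_{0,0}(\R^n,1)$ whose operator fails to be bounded on $h^p$ (on $bmo$ when $p=\infty$). Already $\langle\xi\rangle^{m}\in S^{m}_{0,0}(\R^n,1)$ and $\langle D\rangle^{m}$ is unbounded on $h^p$ once $m>0$, which disposes of $p=2$; to realize the full loss $n|1/p-1/2|$ when $p\neq 2$ one tests operators whose symbols carry a nonstationary but uniformly-bounded-derivative oscillation in $\xi$ — of the form $e^{i\varphi(\xi)}\langle\xi\rangle^{m}\chi(\xi)$, or equivalently a lacunary superposition of modulated frequency bumps — against extremal inputs assembled from modulated bumps, and compares the two sides of the putative inequality. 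These constructions are classical and we refer to the existing literature.
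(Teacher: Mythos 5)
Theorem A is stated in this paper as a reference theorem only: the paper does not prove it, but attributes the ``if'' direction to Calder\'on--Vaillancourt (for $p=2$), Fefferman and Coifman--Meyer (for $1<p<\infty$), and Miyachi \cite{Miyachi-MN} and P\"aiv\"arinta--Somersalo \cite{PS} (for the full range $0<p\le\infty$), and the ``only if'' direction to \cite[Section 5]{Miyachi-MN} and \cite[Theorem 1.5]{KMT-JFA}. So there is no proof in the present paper to compare your attempt against; the comparison has to be with the cited literature.

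With that caveat, your outline is essentially the standard blueprint from that literature and the arithmetic is right. The reduction to the critical order $m_p=-n\,|1/p-1/2|$ is legitimate by monotone inclusion of the classes; the two anchors ($L^2$ at $m=0$, $bmo$ at $m=-n/2$), duality to reach $h^1$, and Stein interpolation for the analytic family $z\mapsto\langle\xi\rangle^z\sigma$ recover the intermediate range, and I checked that the interpolation parameter matches $m_p$ on each of $[2,\infty]$ and $[1,2]$. The frequency-lattice heuristic for why $-n/2$ is the right exponent for the $bmo$ anchor ($\sim R^n$ unit cubes at frequency $\sim R$, each of size $R^m$, square-summable iff $m\le -n/2$) is also accurate, and it is the same almost-orthogonality / Wiener-amalgam mechanism that the present paper deploys in Sections 4--5 for the multilinear case. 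The one place where the sketch genuinely understates the difficulty is $0<p<1$: you reduce to a uniform bound on atoms and wave at ``molecular estimates,'' but for $S_{0,0}$ symbols the kernel has no pointwise off-diagonal decay, so producing the tail estimate requires the full dyadic-in-frequency decomposition with the negative order used to sum, plus the standard care that $\|T_\sigma a\|_{h^p}\lesssim1$ on atoms together with a convergence argument actually yields boundedness on $h^p$. Those are the nontrivial parts of Miyachi's and P\"aiv\"arinta--Somersalo's proofs and are not reproduced here; as a blind high-level plan the proposal is sound, but it is an outline of the literature rather than a self-contained proof.
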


The ``if" part of this result for $p=2$ was proved
by Calder\'on and Vaillancourt in \cite{CV},
and then it was generalized to the case $1< p < \infty$
by Fefferman in \cite{Fefferman} and Coifman and Meyer in \cite{CM-Ast}.
Finally, the boundedness for the full range $0 < p \le \infty$ was obtained
by Miyachi in \cite{Miyachi-MN} and 
P\"aiv\"arinta and Somersalo in \cite{PS}.
For the ``only if'' part,
see, for instance, \cite[Section 5]{Miyachi-MN} and \cite[Theorem 1.5]{KMT-JFA}.

The study of the multilinear case, $N \ge 2$,
originated with the paper \cite{BT-2004} by B\'enyi and Torres,
where they showed that, for $N=2$ and 
for $1 \le p, p_1, p_2 < \infty$ with $1/p = 1/p_1 + 1/p_2$,
$x$-independent symbols in $S^{0}_{0,0}(\R^n, 2)$ 
do not always give rise to bounded operators
from $L^{p_1} \times L^{p_2}$ to $L^p$. 
Then, the condition of $m \in \R$
for which the multilinear pseudo-differential operators 
with symbols in the class
$S^{m}_{0,0}(\R^n, N)$
can be bounded among local Hardy spaces 
was investigated.
More precisely, the following holds:

\begin{referthmB}
Let $N\ge 2$, 
$0 < p, \, p_1,\, \dots, \, p_N \le \infty$, 
$1/p= 1/p_1 + \dots + 1/p_N$, and $m\in \R$. 
Then, the boundedness 
\begin{equation*}
\op( S^{m}_{0,0}(\R^n, N) ) \subset
B(h^{p_1} \times \cdots \times h^{p_N} \to h^{p})
\end{equation*}
holds if and only if 
\begin{equation*}
m \le 
\min \Big\{ \frac{n}{p}, \frac{n}{2} \Big\} 
- \sum_{j =1}^{N} 
\max \Big\{ \frac{n}{p_j}, \frac{n}{2} \Big\},
\end{equation*}
where, if $p_j = \infty$ for some $j\in \{1, \dots, N\}$,
the corresponding $h^{p_j}$ can be replaced by $bmo$. 
\end{referthmB}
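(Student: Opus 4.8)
Since $S^{m'}_{0,0}(\R^n,N)\subset S^{m}_{0,0}(\R^n,N)$ whenever $m'\le m$, for the ``if'' part it is enough to treat the endpoint value
\[
m=m_0:=\min\Big\{\frac np,\frac n2\Big\}-\sum_{j=1}^N\max\Big\{\frac n{p_j},\frac n2\Big\}.
\]
For that I would start from a frequency-uniform decomposition of the symbol: fix $\psi\in\calS(\R^n)$ supported in a fixed cube with $\sum_{k\in\Z^n}\psi(\cdot-k)\equiv1$ and put $\sigma_{\veck}(x,\vecxi)=\sigma(x,\vecxi)\prod_{j=1}^N\psi(\xi_j-k_j)$ for $\veck=(k_1,\dots,k_N)\in(\Z^n)^N$, so that $T_\sigma=\sum_{\veck}T_{\sigma_{\veck}}$ with $1+|\xi_1|+\dots+|\xi_N|\sim1+|\veck|$ on $\supp\sigma_{\veck}$. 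Expanding each $\sigma_{\veck}$ in a Fourier series in $\vecxi$ over the supporting cube and using that an $S^{m_0}_{0,0}$ symbol has \emph{all} $\vecxi$-derivatives bounded, I would rewrite $T_{\sigma_{\veck}}$ as a rapidly convergent superposition over $\vecnu\in(\Z^n)^N$ of operators of the shape $(f_1,\dots,f_N)\mapsto b_{\veck,\vecnu}(x)\prod_{j=1}^N(\Psi_{k_j}f_j)(x-c\,\nu_j)$, with a fixed $c$, where $\Psi_{k_j}$ is a smooth Fourier projection onto the unit cube about $k_j$, where $\sup_x|\partial_x^\beta b_{\veck,\vecnu}(x)|\lesssim_{\beta,L}(1+|\veck|)^{m_0}(1+|\vecnu|)^{-L}$ for all $\beta$ and $L$, and where the kernel of each summand is dominated by $C_L(1+|\veck|)^{m_0}\prod_{j=1}^N(1+|x-y_j|)^{-L}$.

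The heart of the matter is then a building-block estimate for these frequency-localized multilinear pieces, together with the summations in $\veck$ and $\vecnu$ (the latter harmless by the rapid decay). Since the $\veck$-th piece has output frequency supported in a fixed ball about $\ell:=k_1+\dots+k_N$, I would group the pieces by $\ell$ and use the square-function description of $h^p$ attached to a uniform frequency decomposition to reduce everything to an inequality of the form
\[
\Bigl\|\Bigl(\sum_{\ell}\Bigl|\sum_{k_1+\dots+k_N=\ell}(1+|\veck|)^{m_0}\prod_{j=1}^N\Psi_{k_j}f_j\Bigr|^2\Bigr)^{1/2}\Bigr\|_{L^p}\lesssim\prod_{j=1}^N\|f_j\|_{h^{p_j}}.
\]
For a family of model tuples --- each $p_j$ equal to $2$ or to $\infty$ --- I would prove this directly: estimate the inner product by Bernstein's and Hölder's inequalities, distribute the weight $(1+|\veck|)^{m_0}$ among the factors through a splitting $m_0=\sum_j\mu_j$ with each $\mu_j$ governed by $\max\{n/p_j,n/2\}$, and recombine the $k_j$-sums by Littlewood--Paley orthogonality when $p_j=2$ and by duality against $h^1$-atoms when $p_j=\infty$ (in the latter case $T_\sigma$ must first be assigned a meaning on $bmo$, which the kernel decay allows). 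The order $m_0$ is exactly the threshold at which these sums converge, which is the origin of the $\min$- and $\max$-terms. To pass from the model tuples to an arbitrary admissible one at order $m_0$, I would invoke complex interpolation applied to the analytic family $z\mapsto T_{\sigma_z}$ with $\sigma_z=(1+|\vecxi|^2)^{(m(z)-m_0)/2}\sigma\in S^{\mathrm{Re}\,m(z)}_{0,0}(\R^n,N)$, supplemented by the Hardy--Sobolev embeddings $(1+|D|)^{-(n/p_j-n/2)}(h^{p_j})\subset L^2$ ($p_j<2$) to reach the remaining small exponents; equivalently, one carries out the whole scheme inside the Wiener amalgam spaces into which every $h^{p_j}$ embeds and out of which $h^p$ is recovered, where the borderline summation is built into the norms.

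For the ``only if'' part, given a target tuple I would test the putative boundedness --- which, by the closed graph theorem, is accompanied by a bound in terms of finitely many symbol seminorms --- on families $(f_1,\dots,f_N)$ depending on a large parameter $R$: take $f_j$ to be a single modulated Schwartz bump at a frequency of size $R$ when $p_j\le2$, and a spread-out superposition of $\sim R^{n}$ such bumps when $p_j>2$, paired with symbols concentrated at the corresponding frequencies and of size $\sim R^{m}$ there; a direct computation then yields a lower bound of order $R^{\,m-m_0}$ for the relevant operator norm, forcing $m\le m_0$. Freezing all but one input also reduces part of this bookkeeping to the ``only if'' direction of Theorem A, which serves as a consistency check.

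The step I expect to be the main obstacle is the summation over $\veck$ at the critical order $m=m_0$: every naive way of splitting the weight $(1+|\veck|)^{m_0}$ and applying Hölder loses a logarithm at the endpoint, so one genuinely needs the extra orthogonality coming from the disjointness of the output frequency balls together with a space-tailored argument --- complex interpolation from sub-critical estimates, or the Wiener amalgam reformulation --- and, for the exponents with $p<1$, this must be done with no duality available for the target, relying instead on the atomic and molecular theory of $h^p$. Making sense of $T_\sigma$ at the $bmo$ endpoints and proving the bound there is the secondary technical point.
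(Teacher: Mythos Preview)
First, note that Theorem~B is quoted in the paper as an external result (proved in \cite{MT-IUMJ} for $N=2$ and in \cite{KMT-JFA} for $N\ge3$); the paper does not give a self-contained proof of it. What the paper \emph{does} prove is the stronger Theorem~\ref{main-thm-1}, which by Proposition~\ref{classicalderivative} contains the ``if'' direction of Theorem~B. For the ``only if'' direction the paper simply cites \cite[Theorem~1.5]{KMT-JFA}.

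Your outline and the paper's argument share the opening move (uniform frequency decomposition of the symbol, Lemma~\ref{unifdecom}) but then diverge. You propose to first handle model tuples $p_j\in\{2,\infty\}$ and reach general exponents by complex interpolation on an analytic family of symbols; this is essentially the route of \cite{MT-IUMJ} and is sound in principle, though you leave the model-tuple estimates and the interpolation machinery unwritten. The paper instead proceeds \emph{directly} for every admissible tuple, with no interpolation: for $0<p<2$ it controls $\|T_\sigma(\cdot)\|_{h^p}$ by a short-time Fourier transform norm (Lemma~\ref{Amalgam-Hardy}) and dualizes in $L^2_\zeta$; for $2\le p<\infty$ it dualizes in $L^{p'}$; for $p=\infty$ it argues pointwise. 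In every case the inputs are pushed into the Wiener amalgam spaces $W^{p_j,2}_{\alpha(p_j)}$ via Lemma~\ref{Waembd}, and the decisive step---exactly the endpoint $\veck$-summation you flag as the main obstacle---is handled by the discrete weighted convolution inequalities of Lemmas~\ref{productLweakp'} and~\ref{productLweak1}. These lemmas absorb the weight $(1+|\veck|)^{m_0}$ \emph{without} the logarithmic loss you anticipate from na\"ive H\"older, and they are what replaces your interpolation. Your parenthetical remark that ``one carries out the whole scheme inside the Wiener amalgam spaces'' is thus the option the paper actually implements; the complex-interpolation route you describe in more detail is a genuinely different (and heavier) alternative.

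Your proposal for the ``only if'' part---testing on parameterized bump families to extract a power of $R$---is the standard approach and matches what is done in \cite{KMT-JFA}; the paper does not reproduce those calculations. As written, your submission is an accurate strategic sketch with the right obstacle identified, but the key estimates (model tuples, or else the endpoint summation lemma) are promised rather than proved.
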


The case $N=2$ was proved by Miyachi and Tomita \cite{MT-IUMJ}
and the case $N \ge 3$ by Miyachi, Tomita, and the author \cite{KMT-JFA}.
For the preceding results considering the subcritical case,
see the papers by Michalowski, Rule, and Staubach \cite{MRS} and
by B\'enyi, Bernicot, Maldonado, Naibo, and Torres \cite{BBMNT}.
Quite recently, a generalization of Theorem B for $N=2$
considering boundedness  on Sobolev spaces
was shown by Shida \cite{Shida}.

Remark that,
in Theorems A and B,
much smoothness
is implicitly assumed for symbols.
In the rest of this section, 
we shall consider
smoothness conditions for symbols
to assure the boundedness,
which is our interest of the present paper.
We first recall the linear case.
In Miyachi \cite{Miyachi-MN},
it was shown that
the smoothness condition of symbols
assumed in Theorem A
can be relaxed to,
roughly speaking, 
the smoothness up to 
$\min\{n/p, n/2\}$ for the space variable $x$
and 
$\max\{n/p, n/2\}$ for the frequency variable $\xi_1$.
Moreover, it might be worth mentioning that
these values are partially sharp
(see \cite[Section 5]{Miyachi-MN}).
Some results on this direction 
can be also found in, 
for instance,
Boulkhemair \cite{Boulkhemair},
Coifman and Meyer \cite{CM-Ast}, 
Cordes \cite{Cordes}, 
Hwang \cite{Hwang},
Muramatu \cite{Muramatu}, 
and
Sugimoto \cite{Sugimoto-JMSJ}
for $p=2$
and Tomita \cite{Tomita-AM}
for $0 < p < \infty$.
For the multilinear case,
in \cite{KMT-JPDOA, KMT-JMSJ},
it was shown that,
for the case $2/N \le p \le 2$ and $2 \le p_1, \dots, p_N \le \infty$,
the assumptions of the smoothness up to 
$n/2$ for each space and frequency variables
are sufficient to have the boundedness in Theorem B.
See also
Herbert and Naibo \cite{HN-2014, HN-2016}
for the preceding results.

The purpose of this paper is to extend
the partial result on the multilinear case stated above
to the whole range of the exponents 
$0 < p, p_1, \dots, p_N \le \infty$.
We shall determine
the smoothness conditions of symbols
for the boundedness in Theorem B
as weak as possible.
Before stating our main theorem,
we introduce a Besov type class 
to measure the smoothness of symbols.
In order to define this class,
we use a partition of unity as follows.
We take $\psi_{0}, \psi \in \calS (\R^n)$ 
satisfying that 
$\supp \psi_{0} \subset 
\{ \xi \in \R^n : |\xi| \leq 2 \}$,
$\supp \psi \subset 
\{ \xi \in \R^n : 1/2 \leq |\xi| \leq 2 \}$,
and
$\psi_{0}+\sum_{k\in\N} \psi ( 2^{-k} \cdot ) = 1$,
and denote $\psi_k := \psi ( 2^{-k} \cdot )$ for $k \in \N$.
We call this $\{\psi_k\}_{k\in \N_0}$ a  
Littlewood--Paley partition of unity on $\R^n$. 
Moreover, we write as 
$\vecxi = (\xi_1, \dots, \xi_N) \in (\R^n)^{N}$
and
$\langle \xi \rangle = (1 + |\xi|^2)^{1/2}$,
$\xi \in \R^d$,
to shorten the notations.

\begin{defn} \label{def-main-thm-1}
Let $N \ge 2$, $m \in \R$, and $t \in (0, \infty]$
and let $\{\psi_{k}\}_{ k\in \N_0 }$ be
a Littlewood--Paley partition of unity on $\R^n$.
For 
$\veck = (k_0, k_1, \dots, k_N) \in (\N_0)^{N+1}$,
$\vecs = (s_0, s_1, \dots, s_N) \in [0, \infty)^{N+1}$, and 
$\sigma = \sigma (x, \vecxi) \in L^{\infty} ((\R^n)^{N+1})$,
we write
$\vecs\cdot \veck
= \sum_{j=0}^{N} s_{j} k_{j}$
and
\begin{equation*}
\Delta_{\veck} 
\sigma (x, \vecxi)
=
\psi_{k_0} ( D_x )
\psi_{k_1} ( D_{\xi_1} )
\dots
\psi_{k_N} ( D_{\xi_N} )
\sigma (x, \vecxi) .
\end{equation*}
We denote by 
$S^{m}_{0,0} (\vecs, t ; \R^n, N)$
the set of all $\sigma \in L^{\infty} ((\R^n)^{N+1})$
such that the quasi-norm
\begin{align*} 
\|\sigma\|_{ S^{m}_{0,0} (\vecs,t; \R^n, N) } 
=
\bigg\{
\sum_{ \veck \in (\N_0)^{N+1} }
\big( 2^{ \vecs\cdot \veck } \big)^{t} \,
\big\| \|
\langle \vecxi \rangle^{-m}
\Delta_{\veck} 
\sigma (x, \vecxi) \|_{ L^2_{ul,\vecxi} ((\R^{n})^{N}) }
\big\|_{ L^{\infty}_{x} (\R^{n}) } ^{t}
\bigg\}^{1/t}
\end{align*}
is finite,
with a usual modification when $t=\infty$.
\end{defn}

Here, the space $L^{2}_{ul}$ is the uniformly local $L^{2}$ space,
which includes $L^\infty$ (see Subsection \ref{subsecFunctionSpaces}).
Using the class in Definition \ref{def-main-thm-1},
the main theorem of the present paper
reads as follows.

\begin{thm}\label{main-thm-1}
Let $N\ge 2$, 
$0<p, \, p_1,\, \dots, \, p_N \le \infty$, and
$1/p = 1/p_1 + \dots + 1/p_N$.
If
\begin{equation*}
m =
\min \Big\{ \frac{n}{p}, \frac{n}{2} \Big\} 
- \sum_{j =1}^{N} 
\max \Big\{ \frac{n}{p_j}, \frac{n}{2} \Big\} 
\end{equation*}
and
\[
s_0 = \min \Big\{ \frac{n}{p}, \frac{n}{2} \Big\} ,
\quad
s_j = \max \Big\{ \frac{n}{p_j}, \frac{n}{2} \Big\},
\quad j=1,\dots,N,
\]
then
\[
\op\left( S^{m}_{0,0} \big( \vecs, \min\{1,p\}; \R^n, N \big) \right) \subset
B(h^{p_1} \times \dots \times h^{p_N} \to h^{p}) ,
\] 
where, if $p_j = \infty$ for some $j\in \{1, \dots, N\}$,
the corresponding $h^{p_j}$ can be replaced by $bmo$. 
\end{thm}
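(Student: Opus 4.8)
Since the boundedness \eqref{boundedness-XjY} need only be verified for $f_j\in\calS$, one may freely decompose $\sigma=\sum_{\veck\in(\N_0)^{N+1}}\Delta_{\veck}\sigma$ and work term by term. The plan is to reduce the theorem to the single \emph{building block estimate}: for every $\veck\in(\N_0)^{N+1}$,
\[
\|T_{\Delta_{\veck}\sigma}(f_1,\dots,f_N)\|_{h^{p}}
\le C\,2^{\vecs\cdot\veck}\,
\Big\|\,\big\|\langle\vecxi\rangle^{-m}\Delta_{\veck}\sigma(x,\vecxi)\big\|_{L^{2}_{ul,\vecxi}}\,\Big\|_{L^{\infty}_{x}}
\prod_{j=1}^{N}\|f_j\|_{h^{p_j}},
\]
with $C$ independent of $\veck$ and $\sigma$ (and the usual $bmo$ substitutions when an exponent equals $\infty$). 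Granting this, one uses that $h^{p}$ is a $\min\{1,p\}$-normed space (and $bmo$ a Banach space), so $\|T_{\sigma}(f_1,\dots,f_N)\|_{h^p}^{\min\{1,p\}}\le\sum_{\veck}\|T_{\Delta_{\veck}\sigma}(f_1,\dots,f_N)\|_{h^p}^{\min\{1,p\}}$ (for $p>1$ the plain triangle inequality in $L^p$), and one sums the building block estimate in $\ell^{\min\{1,p\}}$ over $\veck$; the resulting bound is, by Definition~\ref{def-main-thm-1}, exactly $\|\sigma\|_{S^{m}_{0,0}(\vecs,\min\{1,p\};\R^n,N)}\prod_j\|f_j\|_{h^{p_j}}$. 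So everything hinges on the building block estimate, in which $\Delta_{\veck}\sigma$ is band-limited in $x$ at scale $2^{k_0}$ and in each $\xi_j$ at scale $2^{k_j}$.

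\textbf{Reducing the range of exponents.}
The pair $(m,\vecs)$ is piecewise affine in $(1/p,1/p_1,\dots,1/p_N)$, the pieces being cut out by comparing each of $1/p,1/p_1,\dots,1/p_N$ with $1/2$, and the symbol quasi-norm is of a mixed $\ell^{t}(L^{\infty}_{x}L^{2}_{ul,\vecxi})$ type; hence complex interpolation in the exponents, together with duality in the Hardy/$bmo$ targets (moving one $f_j$ to the dual side), lets one reduce the building block estimate to the ``basic'' range $2/N\le p\le 2$ and $2\le p_1,\dots,p_N\le\infty$ — where $s_0=s_1=\dots=s_N=n/2$ and $m=-(N-1)n/2$ — together with the cases $0<p_j<2$, where $s_j=n/p_j$. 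Configurations not reached by a duality compatible with the asymmetry between the $L^{\infty}_x$ and $L^{2}_{ul,\vecxi}$ factors are treated directly by the decomposition below, and wherever the derivative losses it incurs are affordable one may simply invoke Theorem~B applied to a suitably normalized block, confining the genuinely new work to the critical low-smoothness regime.

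\textbf{Proof of the building block estimate.}
Fix $\veck$. Decompose $\langle\vecxi\rangle^{-m}\Delta_{\veck}\sigma$ into pieces $\omega_{\vecmu}$, $\vecmu\in(\Z^n)^N$, with $\vecxi$-support in $\vecmu+[0,1]^{Nn}$; the $L^{2}_{ul,\vecxi}$-norm controls $\sup_{\vecmu}\|\omega_{\vecmu}(x,\cdot)\|_{L^2}$, while restoring $\langle\vecxi\rangle^{m}$ ($m<0$) supplies decay in $\vecmu$. Using the $\xi_j$-band-limitation, a Paley--Wiener/Shannon sampling expansion in each $\xi_j$ rewrites each $\omega_{\vecmu}$ as a rapidly convergent sum $\sum_{\vecell}c_{\vecmu,\vecell}(x)\prod_{j}\Phi(2^{k_j}\xi_j-\ell_j)$ over a lattice containing $\sim\prod_j 2^{nk_j}$ points $\vecell$ per unit $\vecxi$-cube, with $\big(\sum_{\vecell}|c_{\vecmu,\vecell}(x)|^2\big)^{1/2}\lesssim\prod_j 2^{nk_j/2}\,\|\omega_{\vecmu}(x,\cdot)\|_{L^{2}}$, each $c_{\vecmu,\vecell}$ still band-limited in $x$ at scale $2^{k_0}$; correspondingly
\[
T_{\Delta_{\veck}\sigma}(f_1,\dots,f_N)(x)=\sum_{\vecmu}\sum_{\vecell}c_{\vecmu,\vecell}(x)\prod_{j=1}^{N}\big(\Phi(2^{k_j}D-\ell_j)f_j\big)(x)
\]
up to harmless modulations. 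A pointwise Cauchy--Schwarz over the $\sim\prod_j2^{nk_j}$ indices $\vecell$ attached to a given $\vecmu$ produces the factor $\prod_j 2^{nk_j/2}=\prod_j 2^{s_jk_j}$ and splits the $\vecmu$-block into $\big(\sum_{\vecell}|c_{\vecmu,\vecell}|^2\big)^{1/2}$ times $\big(\sum_{\vecell}\prod_j|\Phi(2^{k_j}D-\ell_j)f_j|^2\big)^{1/2}$; the second factor is estimated in $h^{p}$ by a vector-valued H\"older/Fefferman--Stein argument reducing to the single-factor square-function bounds $\|(\sum_{\ell_j}|\Phi(2^{k_j}D-\ell_j)f_j|^{2})^{1/2}\|_{h^{p_j}}\lesssim\|f_j\|_{h^{p_j}}$ (Rubio de Francia over the $2^{k_j}$-lattice of frequency cubes for $2\le p_j<\infty$, the $bmo$ endpoint for $p_j=\infty$). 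Summing in $\vecmu$ is then carried out by grouping the $\vecmu$ according to the location $\sum_j\mu_j$ of the output frequency, which is band-limited there at scale $2^{k_0}$: for fixed $\sum_j\mu_j$ one uses disjoint-frequency-cube orthogonality of the $f_j$-pieces again, and over the resulting Littlewood--Paley-type pieces of the output one applies the square-function characterization of $h^p$ together with the $x$-band-limitation and the mapping properties of $x$-band-limited multipliers on $h^p$, which account for the factor $2^{s_0k_0}$. For $0<p_j<2$ one replaces the $L^2$/Rubio-de-Francia input on the $j$-th factor by an atomic decomposition of $f_j$ and a direct estimate of the elementary operators on atoms, turning $2^{nk_j/2}$ into $2^{nk_j/p_j}=2^{s_jk_j}$; when $p\le 1$ (resp.\ $p=\infty$) the output is handled through an atomic (resp.\ molecular) characterization of $h^p$ (resp.\ $bmo$).

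\textbf{Main obstacle.}
The heart of the difficulty is the bookkeeping: one must extract \emph{exactly} the powers $2^{s_jk_j}$, uniformly in $\veck$, which forbids using the crude derivative bounds of $\Delta_{\veck}\sigma$ (and Theorem~B) in the high-frequency blocks and forces the argument to let only the $L^{2}_{ul}$-norm enter — hence the need to run the $\vecxi$-cube decomposition, the $\xi_j$-sampling expansion, and the $x$-band-limitation simultaneously and in matched fashion, including the delicate reassembly of the $\vecmu$-sum, where the $\langle\vecxi\rangle^{m}$ decay, the disjoint-frequency orthogonality, and the quasi-orthogonality of the outputs must all be played off against each other (the weight alone does not converge). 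I expect the remaining serious work to be at the endpoints: $p_j=\infty$ (uniform $bmo$-control of frequency pieces of a $bmo$ function) and the output endpoints $p\le 1$ and $p=\infty$, where the elementary operators must be shown to map atoms to molecules with $\ell^{p}$-summable norms.
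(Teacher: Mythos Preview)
Your high-level reduction --- decompose $\sigma=\sum_{\veck}\Delta_{\veck}\sigma$, prove a building-block estimate with constant $2^{\vecs\cdot\veck}$, then sum in $\ell^{\min\{1,p\}}$ --- is exactly what the paper does. The genuine gap is in your proposed reduction of the exponent range by ``complex interpolation \dots\ together with duality (moving one $f_j$ to the dual side)''. For $x$-dependent symbols the $j$-th transpose $T_\sigma^{*j}$ has a symbol obtained via a partial Fourier transform in $x$, which destroys the $L^\infty_x$ structure of the class; even for $x$-independent symbols, the linear change $(\xi_1,\dots,\xi_N)\mapsto(\xi_1,\dots,-\xi_1-\cdots-\xi_N,\dots,\xi_N)$ does not preserve the weight $\langle\vecxi\rangle^{-m}$ (think $\xi_1+\cdots+\xi_N=0$), so $S^m_{0,0}(\vecs,t)$ is not stable under multilinear transposition. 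The interpolation step is likewise problematic: the pair $(m,\vecs)$ depends on $(p,p_j)$, so you would be interpolating a family of classes, and across the transitions at $p=2$ or $p_j=2$ the affine formulas change. Your fallback ``direct'' route via Shannon sampling, Rubio de Francia, and atoms is plausible in outline but leaves the hardest points unaddressed: the $bmo$ input endpoint for Rubio de Francia, how the atomic argument produces exactly $2^{nk_j/p_j}$ uniformly, and above all the $\vecmu$-summation, where you correctly note that the weight $\langle\vecmu\rangle^{m}$ alone diverges but give no concrete mechanism to close it.

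The paper avoids interpolation and duality entirely and treats the building block (its Proposition~\ref{main-prop}) directly in three regimes $0<p<2$, $2\le p<\infty$, $p=\infty$. The unifying device is the Wiener amalgam embedding $h^{p_j}\hookrightarrow W^{p_j,2}_{\alpha(p_j)}$ (and $bmo\hookrightarrow W^{\infty,2}$), which replaces both your Rubio de Francia and your atomic steps by a single square-function control $\|\langle\nu\rangle^{\alpha(p_j)}\square_\nu f_j\|_{\ell^2_\nu L^{p_j}}\lesssim\|f_j\|_{h^{p_j}}$ valid for all $0<p_j\le\infty$. The $\vecmu$-sum is handled by two sharp weighted $\ell^2$ convolution inequalities (Lemmas~\ref{productLweakp'} and~\ref{productLweak1}) that absorb precisely the decay $\prod_j\langle\nu_j\rangle^{m_j-\alpha(p_j)}$; for $0<p<2$ the output is placed in $h^p$ via the short-time Fourier transform and the embedding $\|f\|_{h^p}\lesssim\|V_gf\|_{L^2_\xi L^p_x}$ (Lemma~\ref{Amalgam-Hardy}), which is where the factor $R_0^{n/2}=2^{s_0k_0}$ appears, while for $2\le p<\infty$ a single duality against $L^{p'}$ together with Lemma~\ref{MT-lemma24} yields $R_0^{n/p}$. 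No case requires moving an $f_j$ to the dual side or interpolating the symbol class.
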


We end this section with noting the organization of this paper.
In Section \ref{secPre}, 
we collect some notations which will be used throughout this paper
and give the definitions and properties of some function spaces.
In Section \ref{secMainResult}, 
we first display the key statements, 
Theorem \ref{main-thm-2} and Proposition \ref{main-prop},
which contains the essential part of Theorem \ref{main-thm-1}.
Then, we prove Theorem \ref{main-thm-2} 
and also consider the boundedness for 
symbols with classical derivatives.
After preparing several lemmas for the proof of 
Proposition \ref{main-prop} 
in Section \ref{seclemmasforProof},
we actually give its proof in Section \ref{secProof}.
In Section \ref{secSharp},
we consider the sharpness 
of the order $m$ and the smoothness $s_0,s_1\dots,s_N$
stated in Theorem \ref{main-thm-1}.

\section{Preliminaries}\label{secPre}


\subsection{Notations}
We denote by $Q$ the $n$-dimensional 
unit cube $[-1/2,1/2)^n$ and 
we write $\ell Q = [-\ell/2, \ell/2)^n$, $\ell > 0$.
Then, the cubes $\ell \tau + \ell Q$, $\tau \in \Z^n$, are mutually 
disjoint and constitute a partition of the 
Euclidean space $\R^n$.
This implies that
integral of a function on $\R^{n}$ 
is written as 
\begin{equation}\label{cubicdiscretization}
\int_{\R^n} f(x)\, dx = 
\sum_{\nu \in \Z^{n}} \int_{\ell Q} f(x+ \ell\nu)\, dx 
\end{equation}
for $\ell > 0$.
We denote by $B_R$ the closed ball in $\R^n$
of radius $R>0$ centered at the origin.
We denote by $\ichi_\Omega$ 
the characteristic function of a set $\Omega$.
For $1 \leq p \leq \infty$, $p^\prime$ is 
the conjugate number of $p$ defined by 
$1/p + 1/p^\prime =1$.

For two nonnegative functions $A(x)$ and $B(x)$ defined 
on a set $X$, 
we write $A(x) \lesssim B(x)$ for $x\in X$ to mean that 
there exists a positive constant $C$ such that 
$A(x) \le CB(x)$ for all $x\in X$. 
We often omit to mention the set $X$ when it is 
obviously recognized.  
Also $A(x) \approx B(x)$ means that
$A(x) \lesssim B(x)$ and $B(x) \lesssim A(x)$.

The symbols $\calS (\R^d)$ and 
$\calS^\prime(\R^d)$ denote 
the Schwartz class of rapidly 
decreasing smooth functions
and 
the space of tempered distributions 
on $\R^d$, respectively. 
The Fourier transform and the inverse 
Fourier transform of $f \in \calS(\R^d)$ are defined by
\begin{align*}
\mathcal{F} f  (\xi) 
= \widehat {f} (\xi) 
= \int_{\R^d}  e^{-i \xi \cdot x } f(x) \, dx,
\quad
\mathcal{F}^{-1} f (x) 
= \frac{1}{(2\pi)^d} \int_{\R^d}  e^{i x \cdot \xi } f( \xi ) \, d\xi,
\end{align*}
respectively. For a Schwartz function $f (x,\xi_{1},\dots,\xi_{N})$,  
$x,\xi_{1},\dots,\xi_{N} \in\R^{n}$, we denote the partial Fourier transform 
with respect to the $x$ and $\xi_{j}$ variables by
$\calF_0$ and $\calF_{j}$, $j=1,\dots,N$, respectively. 
We also write the Fourier transform on 
$(\R^{n})^{N}$ for the $\xi_{1},\dots, \xi_{N}$ variables as
$\calF_{1,\dots,N} = \calF_1 \dots \calF_{N}$.
For $m \in \calS^\prime (\R^n)$, 
we defined 
$m(D) f = \calF^{-1} [m \widehat{f} \,]$ 
and
use the notation 
$m(D)f(x)=m(D_x)f(x)$ 
when we indicate which variable is considered.
For $g \in \mathcal{S} (\mathbb{R}^n) \setminus \{0\}$,
we denote the short-time Fourier transform of 
$f \in \mathcal{S}^\prime (\mathbb{R}^n) $ with respect to $g$ by 
\begin{equation} \label{STFT}
V_g f (x,\xi) = 
\int_{\mathbb{R}^n} e^{ - i \xi \cdot t } \ \overline {g (t-x)} \, f(t) \, d t.
\end{equation}

\subsection{Function spaces} \label{subsecFunctionSpaces}
For a measurable subset $E \subset \R^d$, 
the Lebesgue space $L^p (E)$, $0<p\le \infty$, 
is the set of all those 
measurable functions $f$ on $E$ such that 
$\| f \|_{L^p(E)} = 
( \int_{E} \big| f(x) \big|^p \, dx )^{1/p} < \infty $
if $0< p < \infty$ 
or   
$\| f \|_{L^\infty (E)} 
= \operatorname{ess \, sup}_{x\in E} |f(x)| < \infty$ 
if $p = \infty$. 
If $E=\R^{n}$, 
we usually write $L^p $ for 
$L^p (\R^{n})$. 
The uniformly local $L^2$ space, denoted by 
$L^2_{ul} (\R^d)$, consists of 
all those measurable functions $f$ on 
$\R^d$ such that 
\begin{equation*}
\| f \|_{L^2_{ul} (\R^d) } 
= \sup_{\nu \in \Z^d}
\| f(x+\nu) 
\|_{L^{2}_{x} ([-1/2, 1/2)^d)}
< \infty .
\end{equation*}

For a countable set $K$,
the sequence space $\ell^q (K)$, $0< q \le \infty$, 
is the set of all those 
complex sequences $a=\{a_k\}_{k\in K}$ 
such that 
$ \| a \|_{ \ell^q (K)} 
= ( \sum_{ k \in K } | a_k |^q)^{ 1/q } 
<\infty$ 
if $0< q < \infty$  
or 
$\| a \|_{ \ell^\infty (K) } 
= \sup_{k \in K} |a_k| < \infty$ 
if $q = \infty$.
If $K=\Z^{n}$, 
we usually write $ \ell^q $ for 
$\ell^q (\Z^{n})$. 

Let $X,Y,Z$ be function spaces.  
We use the notation 
$\| f \|_{X} = \| f(x) \|_{X_{x}} $ 
when we indicate which variable is measured.
We denote the mixed norm by
\begin{equation*}
\label{normXYZ}
\| f (x,y,z) \|_{ X_x Y_y Z_z } 
= 
\Big\| \big\| \| f (x,y,z) 
\|_{ X_x } \big\|_{ Y_y } \Big\|_{ Z_z }.
\end{equation*} 
(Pay special attention to the order 
of taking norms.)
For $X, Y, Z$, we consider $L^{p}$ or $\ell^{p}$. 

Let $\phi \in \calS(\R^n)$ be such that
$\int_{\R^n}\phi(x)\, dx \neq 0$ and 
let 
$\phi_t(x)=t^{-n}\phi(x/t)$ for $t>0$. 
The space $H^p= H^p(\R^n)$, $0<p\leq\infty$,  
consists of
all $f \in \calS'(\R^n)$ such that 
$\|f\|_{H^p}=\|\sup_{0<t<\infty}|\phi_t*f|\|_{L^p}
<\infty$. 
The space $h^p= h^p(\R^n)$, $0<p\leq\infty$,  
consists of
all $f \in \calS'(\R^n)$ such that 
$\|f\|_{h^p}=\|\sup_{0<t<1}|\phi_t*f|\|_{L^p}
<\infty$.
It is known that $H^p$ and $h^p$ 
do not depend on the choice of the function $\phi$ 
up to the equivalence of quasi-norm. 
Obviously $H^p \subset h^p$. 
If $1 < p \leq \infty$, 
then 
$H^p = h^p = L^p$ with equivalent norms. 
In more details,
see, for instance,
Goldberg \cite{Goldberg}.

The space $BMO=BMO(\R^n)$ consists of
all locally integrable functions $f$ on $\R^n$ 
such that
\[
\|f\|_{BMO}
=\sup_{R}\frac{1}{|R|}
\int_R |f(x)- f_{R}|\, dx
<\infty,
\]
where $f_R=|R|^{-1}\int_R f(x) \, dx$ 
and $R$ ranges over all the cubes in $\R^n$.
The space $bmo=bmo(\R^n)$ consists of
all locally integrable functions $f$ on $\R^n$ 
such that
\[
\|f\|_{bmo}
=\sup_{|R| \le 1}\frac{1}{|R|}
\int_{R}|f(x)-f_R|\, dx
+\sup_{|R|\geq1}\frac{1}{|R|}
\int_R |f(x)|\, dx
<\infty,
\]
where $R$ denotes cubes in $\R^n$.
Obviously, 
$L^{\infty} \subset bmo \subset BMO$ holds.
Also,
$BMO$ satisfies that
$\|\lambda f\|_{BMO} = |\lambda| \|f\|_{BMO}$,
$\lambda \in \R$,
and
$\|f(\lambda \cdot)\|_{BMO} = \|f\|_{BMO}$,
$\lambda>0$
(see, for instance, Grafakos \cite[Section 3.1.1]{Grafakos-M}).

Let $\kappa\in \calS(\R^n)$ 
be a function such that the support of $\kappa$ is compact and
\begin{equation*} 
\sum _{k\in \mathbb{Z}^{n}} \kappa(\xi-k) = 1, 
\quad \xi \in \R^n.  
\end{equation*}
For $0< p, q \leq \infty$ and $s\in \R$, 
the Wiener amalgam space $W^{p,q}_{s}$ is defined to be the 
set of all $f \in \calS'(\R^n)$ such that 
the quasi-norm 
\begin{equation*}
\|f\|_{W^{p,q}_{s}} 
= \| \langle k \rangle^{s} \, \kappa(D-k)f(x)
\|_{\ell^{q}_{k}(\Z^n) L^p_{x}(\R^n)}
\end{equation*}
is finite. 
If 
$s=0$, we write 
$W^{p,q} = W^{p,q}_{0}$. 
The space $W^{p,q}_{s}$ 
does not depend on the choice of the function $\kappa$ 
up to the equivalence of quasi-norm. 
The space $W^{p,q}_{s}$ is a quasi-Banach space 
(Banach space if $1 \leq  p,q \leq \infty$) and 
$\mathcal{S} \subset W^{p,q}_{s} \subset \mathcal{S}^\prime$. 
If $0 < p,q < \infty$, then $\mathcal{S}$ is dense in $W^{p,q}_{s}$.
It is known that $W^{p,2}$ is equivalent to 
the $L^2$-based amalgam space $(L^2, \ell^{p})$
equipped with the quasi-norm
$\| f \|_{ (L^2, \ell^p) } 
= \| f(x+\nu)
\|_{ L^{2}_{x} (Q) \ell^{p}_{\nu} (\Z^n) }$.
See Feichtinger \cite{Feichtinger-1981} and Triebel \cite{Triebel-1983}
for more details. 
See also \cite{Fei-1983, GS-2004, Gro-book-2001, Kob-2006, WH-2007} 
for modulation spaces
which are variation of the Wiener amalgam spaces.
For amalgam spaces, see \cite{FS, Holland}.

Some of the relations between $W^{p,q}_{s}$ and the spaces 
$L^p$, $h^p$, and $bmo$ will be given below.  
\begin{lem} \label{Waembd}
Let $s \in \R$ and $0 < p, p_1, p_2, q_1, q_2 \le \infty$.
Then,
\begin{align}
&
W^{p_1,q_1}_{s} \hookrightarrow W^{p_2,q_2}_{s}
\quad\textrm{if}\quad
p_1\le p_2, \;\; q_1\le q_2; 
\label{emb-WW}
\\
&
h^p \hookrightarrow W^{p,2}_{ \alpha(p) }, 
\quad\text{where}\quad
\alpha(p) = n/2 - \max \{{n}/{2}, {n}/{p} \}; 
\label{emb-hW}
\\
&
bmo \hookrightarrow W^{\infty, 2} .
\label{emb-bmoW}
\end{align}
\end{lem}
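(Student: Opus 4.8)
The plan is to treat the three inclusions in turn; \eqref{emb-WW} and \eqref{emb-bmoW} are soft, and essentially all of the work goes into \eqref{emb-hW}. For \eqref{emb-WW}, the monotonicity in the second index is just $\ell^{q_1}(\Z^n)\hookrightarrow\ell^{q_2}(\Z^n)$ applied, for each fixed $x$, to the sequence $\{\langle k\rangle^{s}\kappa(D-k)f(x)\}_{k}$. For $p_1\le p_2$ with $q$ fixed, one uses that each piece $\kappa(D-k)f$ has Fourier support in a fixed ball translated by $k$, so that the Peetre maximal-function estimate (the sub--mean-value property of band-limited functions) together with Young's inequality upgrades the inner $L^{p_1}_x$ norm to $L^{p_2}_x$; this is in any case a special case of the embedding theorem for Wiener amalgam spaces, cf.\ \cite{Feichtinger-1981,Triebel-1983}.

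For \eqref{emb-bmoW} I would argue by duality. First one checks $W^{1,2}\hookrightarrow h^1$: using $W^{1,2}\approx(L^2,\ell^1)$ and a smooth partition of unity $1=\sum_{\nu}\theta(\cdot-\nu)$, each $\theta(\cdot-\nu)f$ is an $L^2$ function supported in a ball of bounded radius, hence lies in $h^1$ with $\|\theta(\cdot-\nu)f\|_{h^1}\lesssim\|\theta(\cdot-\nu)f\|_{L^2}$, and summing over $\nu$ gives $\|f\|_{h^1}\lesssim\|f\|_{(L^2,\ell^1)}$. Since $\calS$ is dense in $W^{1,2}$ and in $h^1$, and $(h^1)^{\ast}=bmo$, $(W^{1,2})^{\ast}=W^{\infty,2}$, dualizing this inclusion yields $bmo\hookrightarrow W^{\infty,2}$.

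For \eqref{emb-hW} I distinguish three ranges of $p$. If $2\le p\le\infty$ then $\alpha(p)=0$: for $2\le p<\infty$ one has $h^p=L^p$ and Hölder's inequality on the unit cubes gives $L^p\hookrightarrow(L^2,\ell^p)\approx W^{p,2}$, and $p=\infty$ is \eqref{emb-bmoW} because $L^{\infty}\subset bmo$. If $0<p<1$ I would invoke the atomic decomposition of $h^p$: since $\|\cdot\|_{W^{p,2}_{\alpha(p)}}^{p}$ is subadditive, it suffices to prove $\|a\|_{W^{p,2}_{\alpha(p)}}\lesssim1$ uniformly over $h^p$-atoms $a$, say supported in $B=B(x_0,r)$ with $r\le1$, $\|a\|_{L^{\infty}}\lesssim r^{-n/p}$, and vanishing moments up to order $N=\lfloor n(1/p-1)\rfloor$ when $r<1$. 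Setting $G=\bigl(\sum_{k}\langle k\rangle^{2\alpha(p)}|\kappa(D-k)a|^2\bigr)^{1/2}$, I would bound $\|G\|_{L^p}$ on a fixed dilate $B^{\ast}$ of $B$ by Hölder and then by $\|G\|_{L^2}$, which by Plancherel is comparable to $\bigl(\int\langle\xi\rangle^{2\alpha(p)}|\widehat a(\xi)|^2\,d\xi\bigr)^{1/2}$; splitting that integral at the scales $|\xi|\sim1$ and $|\xi|\sim1/r$ and inserting the moment bound $|\widehat a(\xi)|\lesssim(r|\xi|)^{N+1}\|a\|_{L^1}$ together with $\|\widehat a\|_{L^2}\lesssim r^{\,n/2-n/p}$ makes it $\lesssim1$, every cancellation coming from $2\alpha(p)=n-2n/p$. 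On $\R^n\setminus B^{\ast}$ I would estimate $|\kappa(D-k)a(x)|$ pointwise by the Schwartz tail $\langle x-x_0\rangle^{-L}\|a\|_{L^1}$ when $|k|>1/r$ and by the Taylor/moment bound $(1+|k|)^{N+1}r^{N+1}\langle x-x_0\rangle^{-L}\|a\|_{L^1}$ when $|k|\le1/r$; the resulting sum over $k$ converges --- crucially because $\alpha(p)<-n/2$, i.e.\ $\sum_{k}\langle k\rangle^{2\alpha(p)}<\infty$, exactly when $p<1$ --- leaving $G(x)\lesssim\langle x-x_0\rangle^{-L}$ there, which is in $L^p$. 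Finally, for $1\le p<2$ I would interpolate between the case $p<1$ just treated and $h^2=L^2=W^{2,2}_{0}$: $h^p=[h^{p_0},h^2]_{\theta}$ and $W^{p,2}_{\alpha(p)}=[W^{p_0,2}_{\alpha(p_0)},W^{2,2}_{0}]_{\theta}$ with $1/p=(1-\theta)/p_0+\theta/2$, and the smoothness matches because $(1-\theta)\alpha(p_0)=\alpha(p)$ --- precisely the arithmetic identity built into the definition of the $\alpha$'s.

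The step I expect to be the real obstacle is the $0<p<1$ atomic estimate: the delicate point is to arrange the tail estimate on $\R^n\setminus B^{\ast}$ so that the lattice summation converges --- which is what forces the exact value $\alpha(p)=n/2-n/p$ (equivalently $\alpha(p)<-n/2$) --- while at the same time using the vanishing moments to compensate for the fact that $\|a\|_{L^1}$ is only controlled by $r^{\,n-n/p}$, which blows up as $r\to0$. A secondary point of care is the legitimacy of complex interpolation of the quasi-Banach spaces $W^{p_0,2}_{\alpha(p_0)}$ for $p_0<1$, which can be handled by passing through the Triebel--Lizorkin realization $h^p=F^{0}_{p,2}$.
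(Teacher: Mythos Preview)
The paper does not actually prove this lemma: it simply cites \cite{CKS-JFA} for \eqref{emb-hW} in the range $1<p\le\infty$, \cite{GWYZ-JFA} for \eqref{emb-hW} in the range $0<p\le1$, and \cite{KMT-JFA} for \eqref{emb-WW} and \eqref{emb-bmoW}. Your proposal, by contrast, supplies a direct argument, so there is little to compare on the level of method. Your treatments of \eqref{emb-WW} (Nikol'skij/Peetre on each frequency-localized piece, plus $\ell^{q_1}\hookrightarrow\ell^{q_2}$) and of \eqref{emb-bmoW} (prove $W^{1,2}\approx(L^2,\ell^1)\hookrightarrow h^1$ and dualize) are correct and self-contained. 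Your atomic argument for \eqref{emb-hW} when $0<p<1$ is also sound; the near-ball piece works via H\"older and Plancherel, and the key observation for the tail --- that $2\alpha(p)<-n$ exactly when $p<1$, making $\sum_k\langle k\rangle^{2\alpha(p)}$ convergent --- is right. You should add a line for the ``large'' $h^p$-atoms (radius $r\approx1$, no moments), but those are easier since then $\|a\|_{L^1}\lesssim1$.

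The one genuine soft spot is $1\le p<2$. Your interpolation uses an endpoint $p_0<1$, so you must interpolate the \emph{target} spaces $W^{p_0,2}_{\alpha(p_0)}$ in the quasi-Banach range; invoking the Triebel--Lizorkin realization $h^p=F^{0}_{p,2}$ addresses only the source side and does not fix this. The gap is fillable --- complex interpolation of Wiener amalgam/modulation spaces in the full range is available via retraction to weighted mixed-norm sequence spaces --- but that is not what you wrote. An alternative that stays in the Banach range is to establish $p=1$ directly and then interpolate between $p=1$ and $p=2$; note, however, that your atomic tail estimate collapses at $p=1$ precisely because $\sum_k\langle k\rangle^{-n}=\infty$, so the endpoint $h^1\hookrightarrow W^{1,2}_{-n/2}$ genuinely needs a different argument (this is what \cite{GWYZ-JFA} supplies).
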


\begin{proof}
The embedding \eqref{emb-hW}
is given in \cite[Theorems 1.1]{CKS-JFA} for $1 < p \le \infty$
and in \cite[Theorem 1.2]{GWYZ-JFA} for $0 < p \le 1$.
The explicit proofs of \eqref{emb-WW} and \eqref{emb-bmoW}
can be found in \cite[Lemma 2.2]{KMT-JFA}.
\end{proof}

\section{Main result}\label{secMainResult}

\subsection{Refined version of the main theorem}
In this subsection, 
we give a slight extension of 
Theorem \ref{main-thm-1}.
To do this, we shall define the following symbol class
which can be wider than the class 
stated in Definition \ref{def-main-thm-1}.
\begin{defn}
Let $N \ge 2$.
For $\vecm = (m_1,\dots,m_N) \in \R^N$,
$\vecs \in [0, \infty)^{N+1}$,
and $t \in (0, \infty]$,
we denote by 
$S^{\vecm}_{0,0} (\vecs, t ; \R^n, N)$
the set of all $\sigma \in L^{\infty} ((\R^n)^{N+1})$
such that the quasi-norm
\begin{align*} 
\|\sigma\|_{ S^{\vecm}_{0,0} (\vecs,t; \R^n, N) } 
=
\bigg\{
\sum_{ \veck \in (\N_0)^{N+1} }
\big( 2^{ \vecs\cdot \veck } \big)^{t} \,
\Big\|
\prod_{j=1}^{N} \langle \xi_j \rangle^{-m_j}
\Delta_{\veck} 
\sigma (x, \vecxi)
\Big\|_{ L^2_{ul,\vecxi} ((\R^{n})^{N}) L^{\infty}_{x} (\R^{n}) } ^{t}
\bigg\}^{1/t}
\end{align*}
is finite,
with a usual modification when $t=\infty$.
\end{defn}

The definition above does not depend on 
the choice of the Littlewood--Paley partition
up to the equivalence of quasi-norms.
Also, the same applies to the class 
given in Definition \ref{def-main-thm-1}.

Notice that 
if $m_1, \dots, m_N \le 0$ and
if $m_1+\dots+m_N = m$,
then 
$S^{m}_{0,0} (\vecs, t ; \R^n, N)
\subset 
S^{\vecm}_{0,0} (\vecs, t ; \R^n, N)$.
Then, we see that
the theorem below induce 
the statement of Theorem \ref{main-thm-1}. 

\begin{thm}\label{main-thm-2}
Let $N\ge 2$,
$0<p, p_1,\dots,p_N \le \infty$, and
$1/p \le 1/p_1 + \dots + 1/p_N$.
\begin{enumerate}
\item
Let $m_1, \dots, m_N \in \R$ satisfy
\begin{equation} \label{criticalvecm1}
- \max \Big\{ \frac{n}{p_j}, \frac{n}{2} \Big\} 
< m_j <
\frac{n}{\,2\,} - \max \Big\{ \frac{n}{\,p_j}, \frac{n}{2} \Big\},  
\quad j=1, \dots, N, 
\end{equation}
and
\begin{equation} \label{criticalvecm2}
m_1 + \dots +m_N = 
\min \Big\{ \frac{n}{p}, \frac{n}{2} \Big\} 
- \sum_{j =1}^{N} 
\max \Big\{ \frac{n}{p_j}, \frac{n}{2} \Big\} .
\end{equation}
\begin{enumerate}
\item
If $0 < p < 2$,
$s_0 = {n}/{2}$, and
$s_j = \max \{ {n}/{p_j}, {n}/{2} \}$,
$j=1,\dots,N$, 
then 
\[
\op\left( S^{\vecm}_{0,0} \big( \vecs,\min\{1,p\}; \R^n, N \big) \right) \subset
B(h^{p_1} \times \dots \times h^{p_N} \to h^{p}) .
\] 

\item
If $2 \le p < \infty$,
$s_0 = {n}/{p}$, and
$s_j = {n}/{2}$,
$j=1,\dots,N$,
then 
\[
\op\left( S^{\vecm}_{0,0} \big( \vecs, 1; \R^n, N \big) \right) \subset
B(h^{p_1} \times \dots \times h^{p_N} \to L^{p}) .
\] 
\end{enumerate}

\item
Let
$m= - \sum_{j =1}^{N} 
\max \{ {n}/{p_j}, {n}/{2} \}$.
If $s_0 = 0$ and $s_j = {n}/{2}$, $j=1,\dots,N$,
then 
\[
\op\left( S^{m}_{0,0} \big( \vecs ,1; \R^n, N \big) \right) \subset
B(h^{p_1} \times \dots \times h^{p_N} \to L^{\infty}) .
\] 
\end{enumerate}
In the above assertions, if some of the $p_j$'s, $j\in \{1, \dots, N\}$, 
are equal to $\infty$, 
then the conclusions  
hold with the corresponding $h^{p_j}$ replaced by $bmo$. 
\end{thm}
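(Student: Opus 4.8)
The plan is to reduce the multilinear boundedness to a bilinear/multilinear square-function estimate on Wiener amalgam spaces, exploiting the embeddings $h^{p_j}\hookrightarrow W^{p_j,2}_{\alpha(p_j)}$ and $bmo\hookrightarrow W^{\infty,2}$ from Lemma \ref{Waembd}, together with the duality $h^p\leftrightarrow W^{p,2}_{\alpha(p)}$ on the target side. The key device is to decompose the symbol dyadically in all $N+1$ variables, writing $\sigma=\sum_{\veck}\Delta_{\veck}\sigma$, and to estimate the operator norm of $T_{\Delta_{\veck}\sigma}$ by a quantity that decays like $2^{-\vecs\cdot\veck}$ times the $L^2_{ul}L^\infty$-size of the piece; then the $\ell^{\min\{1,p\}}$-summability built into the class $S^{\vecm}_{0,0}(\vecs,\min\{1,p\})$ closes the sum. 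Concretely, for each fixed $\veck$ one uses that $\psi_{k_0}(D_x)\psi_{k_1}(D_{\xi_1})\cdots\psi_{k_N}(D_{\xi_N})\sigma$ is, after rescaling, essentially supported in a box of controlled size, so that the kernel of $T_{\Delta_{\veck}\sigma}$ can be analyzed by almost-orthogonality in $x$ and by Plancherel/Schur-type bounds in the frequency variables $\xi_1,\dots,\xi_N$.

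First I would treat case (1)(b), the model subcritical-exponent case $2\le p<\infty$ with target $L^p$, since there $h^{p_j}=L^{p_j}$ or $bmo$ and $L^p$ is a Banach space with an easy dual. Here the strategy is: apply the $W^{p_j,2}$-embeddings to each input, so that $\|f_j\|_{W^{p_j,2}}\lesssim\|f_j\|_{h^{p_j}}$ (note $\alpha(p_j)=0$ when $p_j\ge 2$, matching $s_j=n/2$ being absorbed by the symbol normalization $\langle\xi_j\rangle^{-m_j}$), and then prove the core estimate
\[
\|T_\sigma(f_1,\dots,f_N)\|_{L^p}\lesssim \|\sigma\|_{S^{\vecm}_{0,0}(\vecs,1)}\prod_{j=1}^N\|f_j\|_{W^{p_j,2}}.
\]
This I would do by the standard Wiener-amalgam machinery: expand each $f_j$ via the partition $\kappa(D-\ell_j)f_j$, expand $\sigma$ via $\Delta_{\veck}\sigma$, and observe that $T_{\Delta_{\veck}\sigma}$ applied to such frequency-localized pieces produces output localized near $\ell_1+\dots+\ell_N$ in frequency; summing in $\ell_1,\dots,\ell_N$ against the $\ell^2$ norms of the pieces and using the $L^2_{ul}$ control of $\Delta_{\veck}\sigma$ (which is exactly what lets one run Plancherel in the $\vecxi$ variables locally while keeping $L^\infty$ in $x$) gives the $2^{-\vecs\cdot\veck}$ gain, after which $\sum_{\veck}2^{-\vecs\cdot\veck}(\text{size})\lesssim\|\sigma\|_{S^{\vecm}_{0,0}(\vecs,1)}$. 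Case (2) with target $L^\infty$ is the endpoint $p=\infty$ of this and follows by the same computation with $s_0=0$; the point is that no $x$-smoothness of the symbol is needed to land in $L^\infty$, only $n/2$ in each frequency.

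For case (1)(a), $0<p<2$, the target $h^p$ is only a quasi-Banach space, so duality must be replaced by an atomic or maximal-function argument, and one genuinely needs the smoothness $s_0=n/2$ in the space variable $x$. Here I would dualize against $H^{p'}$-type objects when $1\le p<2$, or use the local Hardy space atomic decomposition and the molecule estimate when $0<p<1$, in either case reducing to showing that $T_{\Delta_{\veck}\sigma}$ maps atoms to objects with the right decay and cancellation — the $2^{-(n/2)k_0}$ factor coming from the $\psi_{k_0}(D_x)$ localization is precisely what provides the needed regularity/decay of the kernel in $x$, while the $\min\{1,p\}$ exponent in the class is what makes the dyadic sum over $k_0$ (and the others) summable in $\ell^{p}$ of the $h^p$-quasi-norm. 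The main obstacle I anticipate is this case $0<p<1$: controlling the $h^p$ quasi-norm of $\sum_{\veck}T_{\Delta_{\veck}\sigma}(f_1,\dots,f_N)$ requires a genuinely multilinear Littlewood–Paley / square-function estimate on the output side (one cannot just sum quasi-norms naively), and reconciling the $L^2_{ul}$-in-$\vecxi$, $L^\infty$-in-$x$ structure of the symbol norm with the $h^p$ maximal characterization of the output — especially handling the $bmo$ substitutions when some $p_j=\infty$ — is where the real work lies. This is presumably the content of Proposition \ref{main-prop}, whose proof occupies Sections \ref{seclemmasforProof}–\ref{secProof}; at the level of this section I would simply set up the reduction and defer the hard kernel estimates to that proposition.
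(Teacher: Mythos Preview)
Your reduction is exactly what the paper does: decompose $\sigma=\sum_{\veck}\Delta_{\veck}\sigma$, observe that $\calF(\Delta_{\veck}\sigma)$ is supported in $B_{R_0}\times\cdots\times B_{R_N}$ with $R_j\approx 2^{k_j}$, apply Proposition \ref{main-prop} to each piece, and sum the resulting operator norms in $\ell^{\min\{1,p\}}$ (respectively $\ell^1$). One correction to your bookkeeping: the operator norm of $T_{\Delta_{\veck}\sigma}$ does \emph{not} decay like $2^{-\vecs\cdot\veck}$ times the $L^2_{ul}L^\infty$ size of the piece; Proposition \ref{main-prop} gives a \emph{loss} of $\prod_j R_j^{s_j}\approx 2^{\vecs\cdot\veck}$, so that $\|T_{\Delta_{\veck}\sigma}\|\lesssim 2^{\vecs\cdot\veck}|\Delta_{\veck}\sigma|$, and the point is that $2^{\vecs\cdot\veck}|\Delta_{\veck}\sigma|$ is precisely the summand defining the $S^{\vecm}_{0,0}(\vecs,t)$ norm.

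Regarding your anticipated proof of Proposition \ref{main-prop}: your plan for (1)(b) and (2) is close to the paper's (duality against $L^{p'}$, amalgam decomposition of the $f_j$ via $\kappa(D-\nu_j)$, then Lemmas \ref{productLweakp'}, \ref{productLweak1}, \ref{MT-lemma24}). For (1)(a), however, the paper does \emph{not} use atomic decompositions or molecular estimates. Instead it controls the $h^p$ norm through Lemma \ref{Amalgam-Hardy}, writing $\|T_\sigma(\dots)\|_{h^p}\lesssim\|V_g[T_\sigma(\dots)](x,\zeta)\|_{L^2_\zeta L^p_x}$, dualizing the inner $L^2_\zeta$, and then running essentially the same amalgam argument as in (1)(b) pointwise in $x$ before taking $L^p_x$. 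This completely sidesteps the quasi-Banach summation and $bmo$-input difficulties you flag; the factor $R_0^{n/2}$ (hence $s_0=n/2$) arises simply from counting how many unit $\zeta$-cubes meet the Fourier support of $V_g[T_{\sigma_{\vecnu}}(\dots)](x,\cdot)$, which is a ball of radius $\approx R_0$.
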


\begin{rem} \label{Remark-class}
The boundedness stated in Theorem \ref{main-thm-2} holds still true
even if the norm
\[
\| f(x,\vecxi) \|_{ L^2_{ul,\vecxi} ((\R^{n})^{N}) L^{\infty}_{x} (\R^{n}) }
\]
of the classes
$S^{\vecm}_{0,0} ( \vecs, t; \R^n, N )$ and 
$S^{m}_{0,0} ( \vecs, t; \R^n, N )$
is replaced by the better one
\[
\sup_{ \nu_0,\nu_1, \dots, \nu_N \in \Z^n }
\left\| f(x+\nu_0, \xi_1+\nu_1,\dots,\xi_N+\nu_N) 
\right\|_{ L^{2}_{(\xi_1,\dots,\xi_N)} (Q^N) L^{\max \{p,2\}}_{x} (Q) } .
\]
This can be seen by a careful following of 
the proof given in Section \ref{secProof},
but since its proof becomes much more complicated,
we leave it to the interested readers.
\end{rem}

\subsection{Key proposition}

Proposition \ref{main-prop} below
plays a crucial role in our argument
and contains the essential part of Theorem \ref{main-thm-2}.
The proof will be given in the succeeding sections.

\begin{prop} \label{main-prop}
Let $N\ge 2$,
$0<p, p_1,\dots, p_N \le \infty$, and
$1/p \le 1/p_1 + \dots + 1/p_N$.
Suppose $\sigma \in L^{\infty} ((\R^n)^{N+1})$ satisfies
$\supp \calF \sigma \subset 
B_{R_0} \times B_{R_1} \times \dots \times B_{R_N}$
for $R_{0}, R_{1}, \dots, R_{N} \in [1, \infty)$.
\begin{enumerate}
\item
Let $m_1, \dots, m_N \in \R$ satisfy 
\eqref{criticalvecm1} and \eqref{criticalvecm2}.
\begin{enumerate}
\item
If $0 < p < 2$,
then 
\begin{equation*}
\| T_{ \sigma } \|_{h^{p_1} \times \dots \times h^{p_N} \to h^{p}}
\lesssim
R_{0}^{n/2}
\prod_{j=1}^N R_{j}^{ \max\{ n/p_j , n/2 \} }
\Big\| \prod_{j=1}^{N}
\langle \xi_j \rangle^{-m_j} \sigma (x,\vecxi) 
\Big\|_{ L^2_{ul,\vecxi} ((\R^{n})^{N}) L^{\infty}_{x} (\R^{n})} .
\end{equation*}

\item
If $2 \le p < \infty$,
then 
\begin{equation*}
\| T_{ \sigma } \|_{h^{p_1} \times \dots \times h^{p_N} \to L^{p}}
\lesssim
R_{0}^{n/p}
\prod_{j=1}^N R_{j}^{ n/2 }
\Big\| \prod_{j=1}^{N}
\langle \xi_j \rangle^{-m_j} \sigma (x,\vecxi) 
\Big\|_{ L^2_{ul,\vecxi} ((\R^{n})^{N}) L^{\infty}_{x} (\R^{n})} .
\end{equation*}
\end{enumerate}

\item
Let 
$m = - \sum_{j =1}^{N} 
\max \{ {n}/{p_j}, {n}/{2} \}$.
Then,
\begin{equation*}
\| T_{ \sigma } \|_{h^{p_1} \times \dots \times h^{p_N} \to L^{\infty}}
\lesssim
\prod_{j=1}^N R_{j}^{n/2}
\| 
\langle \vecxi \rangle^{-m} \sigma (x,\vecxi) 
\|_{ L^2_{ul,\vecxi} ((\R^{n})^{N}) L^{\infty}_{x} (\R^{n})} . 
\end{equation*}
\end{enumerate}
In the above assertions, if some of the $p_j$'s, $j\in \{1, \dots, N\}$, 
are equal to $\infty$, 
then the conclusions  
hold with the corresponding $h^{p_j}$ replaced by $bmo$. 
\end{prop}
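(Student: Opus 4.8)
The plan is to reduce the boundedness on local Hardy spaces to estimates on Wiener amalgam spaces via Lemma 2.x (the embeddings $h^p\hookrightarrow W^{p,2}_{\alpha(p)}$ and $bmo\hookrightarrow W^{\infty,2}$), so that the compact Fourier support of $\sigma$ can be exploited through the lattice structure of the amalgam norm. First I would fix Littlewood--Paley-type pieces adapted to the balls $B_{R_j}$: write $\widehat{f_j}=\sum_{\nu_j}\kappa(D-\nu_j)f_j$ as in the definition of $W^{p_j,2}$, and correspondingly decompose the output of $T_\sigma$ over the lattice $\Z^n$. Because $\supp\calF\sigma$ is contained in a product of balls of radii $R_j$, the frequency of $T_\sigma(f_1,\dots,f_N)$ is controlled by $|\xi_1+\dots+\xi_N|\lesssim R_1+\dots+R_N$ plus the $x$-frequency $R_0$; thus only boundedly many (in terms of the $R_j$) lattice translates interact, and summing them produces the powers $R_0^{n/2}\prod R_j^{\max\{n/p_j,n/2\}}$ (resp.\ $R_0^{n/p}\prod R_j^{n/2}$, resp.\ $\prod R_j^{n/2}$) as the price of passing from $\ell^2$-type to $\ell^1$- or $\ell^\infty$-type summation over the relevant cubes. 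The exponents $m_j$ and the constraints \eqref{criticalvecm1}--\eqref{criticalvecm2} are exactly what is needed so that $\langle\xi_j\rangle^{-m_j}\approx R_j^{-m_j}$ on $B_{R_j}$ and the bookkeeping of all powers of $R_j$ closes.

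Concretely, for part (1)(b) with $2\le p<\infty$ I would estimate $\|T_\sigma(f_1,\dots,f_N)\|_{L^p}$ directly: split the operator as a sum over $\vecnu=(\nu_1,\dots,\nu_N)$ of pieces with $\widehat{f_j}$ replaced by $\kappa(D-\nu_j)f_j$, use that for each $\vecnu$ the symbol $\sigma$ localized in $\xi$-frequency to $B_{R_j}$ gives a kernel whose $L^1_x$-in-the-kernel / Schur-type bounds yield $\|T_\sigma^{\vecnu}(f_1,\dots,f_N)\|_{L^p_x}\lesssim \prod_j\|\kappa(D-\nu_j)f_j\|_{?}$ with a gain, and then sum in $\vecnu$ using $\ell^{p_j}$-summability of $\{\kappa(D-\nu_j)f_j\}$ together with Hölder. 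The uniformly local $L^2$ norm in $\vecxi$ of $\sigma$ enters via Plancherel on each unit cube after the $\kappa$-localization, and the $L^\infty_x$ norm is what survives because we only need an $L^p_x$ (not $h^p_x$) bound on the output, so no cancellation in $x$ is required. For (1)(a) with $0<p<2$ one additionally has to upgrade $L^p$ to $h^p$; here I would compose with a local Hardy-space square function / atomic argument, or equivalently bound the Wiener amalgam norm $W^{p,2}_{\alpha(p)}$ of the output from below by $\|T_\sigma(\cdots)\|_{h^p}$ using the reverse embedding direction available for compactly Fourier-supported outputs, paying $R_0^{n/2}$ instead of $R_0^{n/p}$ because now $\alpha(p)=n/2-n/p\le 0$. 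Part (2) (target $L^\infty$, with $s_0=0$) is the degenerate endpoint: no summation in $\nu_0$ is needed, and one simply estimates $\|T_\sigma(f_1,\dots,f_N)\|_{L^\infty_x}$ by $\sup_x\int|\text{kernel}|$ against $\prod_j\|f_j\|_{h^{p_j}}$, again via the amalgam embeddings and Plancherel, giving $\prod_j R_j^{n/2}$.

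The main obstacle I expect is the passage between $h^p$ and $L^p$ for $0<p<2$ in part (1)(a) while keeping the $R_0$-power at the sharp value $R_0^{n/2}$: one must show that the operator really does land in $h^p$ (not merely in some amalgam space) with a constant that is polynomial in $R_0$ of the right degree. This requires either (i) running the whole argument through the grand maximal function characterization of $h^p$, controlling $\sup_{0<t<1}|\phi_t * T_\sigma(\cdots)|$ uniformly and commuting $\phi_t*$ past the $\kappa$-localizations at a cost bounded by $R_0^{n/2}$, or (ii) invoking a molecular/atomic decomposition of $h^p$ on the input side and producing $h^p$-molecules on the output side. I would carry out (i), since the Fourier support of $T_\sigma(f_1,\dots,f_N)$ in $x$ is contained in a ball of radius $\lesssim R_0+R_1+\dots+R_N$, so $\phi_t * $ acts as a fixed (in $t\in(0,1)$) Fourier multiplier up to harmless error, reducing the local maximal function to a single convolution and making the $R_0^{n/2}$ bound transparent via the $L^2$-based amalgam $W^{p,2}_{\alpha(p)}$. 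The rest is careful but routine bookkeeping of the $R_j$-powers and of the exponents $m_j$ under \eqref{criticalvecm1}--\eqref{criticalvecm2} and Hölder's inequality in the $p_j$.
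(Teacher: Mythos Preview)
Your plan contains a serious misreading of the hypothesis. The condition $\supp\calF\sigma\subset B_{R_0}\times B_{R_1}\times\cdots\times B_{R_N}$ means the \emph{full} Fourier transform of $\sigma$ (in the variables $(x,\xi_1,\dots,\xi_N)$) is supported there; in particular the $\xi_j$-variables themselves are \emph{not} restricted to $B_{R_j}$, so statements like ``$\langle\xi_j\rangle^{-m_j}\approx R_j^{-m_j}$ on $B_{R_j}$'' and ``the frequency of $T_\sigma(f_1,\dots,f_N)$ is controlled by $|\xi_1+\dots+\xi_N|\lesssim R_1+\dots+R_N$'' are simply false. What the condition does give is that $\calF_{1,\dots,N}\sigma_{\vecnu}(x,\cdot)$ is supported in $B_{2R_1}\times\cdots\times B_{2R_N}$ (because the $\chi(\xi_j-\nu_j)$ have compact Fourier support), so the \emph{kernel} $(\calF_{1,\dots,N}\sigma_{\vecnu})(x,\vecz)$ lives on a product of balls of radii $\sim R_j$ in the physical $z_j$-variables. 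The paper exploits this via a pointwise estimate (its Lemma~5.1): covering each $B_{2R_j}$ by unit cubes indexed by $\tau_j\in\Lambda_{R_j}$, applying Cauchy--Schwarz in $\vecz$ and in $\vectau$, and recovering $\|\sigma_{\vecnu}(x,\cdot)\|_{L^2_{\vecxi}}\lesssim W(\vecnu)\,|\sigma|_{\vecm}$ by Plancherel. The $R_j$-powers come from the cardinality of $\Lambda_{R_j}$ together with the embedding $\ell^{\min\{2,p_j\}}\hookrightarrow\ell^2$ on $\Lambda_{R_j}$, not from any frequency localization of the $f_j$.

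The second gap is in the summation over $\vecnu\in(\Z^n)^N$. You propose ``H\"older in the $p_j$,'' but H\"older does not close at the critical values \eqref{criticalvecm1}--\eqref{criticalvecm2}: one needs the multilinear weighted $\ell^2$-to-$\ell^r$ lemmas (the paper's Lemmas~4.2--4.3) of the form
\[
\sum_{\vecnu} A_0(\nu_1+\cdots+\nu_N)\prod_{j}(1+|\nu_j|)^{a_j}A_j(\nu_j)\lesssim \|A_0\|_{\ell^r}\prod_j\|A_j\|_{\ell^2},
\]
with $-n/2<a_j<0$ and $\sum a_j=n/r-Nn/2$. This is exactly where the open conditions on $m_j$ enter, and it is what produces the coupling to a single function $A_0$ of $\nu_1+\cdots+\nu_N$. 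For (1)(a) that function is $\|\varphi((\cdot-\nu)/R_0)h\|_{L^2}$ coming from duality in $L^2_\zeta$ via the short-time Fourier transform bound $\|F\|_{h^p}\lesssim\|V_gF(x,\zeta)\|_{L^2_\zeta L^p_x}$ (Lemma~4.5), and $\|\varphi((\cdot-\nu)/R_0)h\|_{L^2\ell^2_\nu}\lesssim R_0^{n/2}\|h\|_{L^2}$ is the source of $R_0^{n/2}$. For (1)(b) the function $A_0$ is $|\varphi((D+\nu)/R_0)h(x)|$ for $h\in L^{p'}$, and Lemma~4.6 gives $\|\varphi((D-\nu)/R_0)h\|_{\ell^p_\nu L^{p'}_x}\lesssim R_0^{n/p}\|h\|_{L^{p'}}$. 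Your proposed route through the grand maximal function for (1)(a) does not isolate such an $A_0(\nu_1+\cdots+\nu_N)$ and so cannot feed into the summation lemma; without it the bookkeeping does not close at the critical order.
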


\subsection{Proof of Theorem \ref{main-thm-2}} 

Boulkhemair \cite{Boulkhemair} first pointed out that,
in order to investigate the smoothness condition
to assure the $L^2$-boundedness of the linear pseudo-differential operators,
it suffices to consider the boundedness for symbols
whose Fourier supports are compact.
Our strategy relies heavily on his idea.
We shall proceed to the proof of Theorem \ref{main-thm-2}.
We decompose the symbol $\sigma$ into
the sum of
$\Delta_{\veck} \sigma$
over $\veck \in (\N_0)^{N+1}$.
Since the 
support of 
$\calF (\Delta_{\veck} \sigma)$ 
is included in 
$B_{R_0} \times B_{R_1} \times \cdots \times B_{R_N}$
with 
$R_{j}= 2^{ k_{j} +1}$,
$j=0,1,\dots,N$,
we see that Theorem \ref{main-thm-2} follows 
by applying Proposition \ref{main-prop} to 
the decomposed operators $T_{\Delta_{\veck} \sigma}$.

\subsection{Symbols with classical derivatives} 
\label{secClassicalDerivatives} 
The following proposition shows  
that symbols that have classical derivatives 
up to certain order satisfy the conditions of 
Theorem \ref{main-thm-2}. 

\begin{prop} \label{classicalderivative} 
Let $N \ge 2$, 
$m, m_1, \dots, m_N \in \R$,
$s_0, s_1, \dots, s_N \in [0, \infty)$,
and $t \in (0, \infty]$. 
If a bounded measurable 
function $\sigma$ on $(\R^n)^{N+1}$
satisfies
\[
 | \partial_{x}^{ \alpha_{0} }
\partial_{ \xi_{1} }^{ \alpha_{1} } \cdots
\partial_{ \xi_{N} }^{ \alpha_{N} }
\sigma (x, \xi_1, \dots, \xi_N) |
\le 
(1+|\xi_1|+\dots+|\xi_N|)^{m}
\]
or
\[
 | \partial_{x}^{ \alpha_{0} }
\partial_{ \xi_{1} }^{ \alpha_{1} } \cdots
\partial_{ \xi_{N} }^{ \alpha_{N} }
\sigma (x, \xi_1, \dots, \xi_N) |
\le 
\prod_{j=1}^{N} (1+|\xi_j|)^{m_j} 
\]
for
$\alpha_{j}\in (\N_0)^n$ 
with 
$|\alpha_{j}| \le [s_{j}]+1$,
where 
$[s_j]$ is the integer part of $s_j$,
then
$\sigma \in 
S^{m}_{0,0} ( \vecs, t ; \R^n, N)$
or
$\sigma \in 
S^{\vecm}_{0,0} ( \vecs, t ; \R^n, N)$,
respectively.

To be precise, 
the above assumptions should be understood that 
the derivatives of $\sigma$ taken 
in the sense of distribution 
are locally integrable functions on $(\R^{n})^{N+1}$ 
and they satisfies the inequality stated above almost everywhere.  
\end{prop}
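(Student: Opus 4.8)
The plan is to reduce the decay estimate on the Littlewood--Paley pieces $\Delta_{\veck}\sigma$ to repeated integration by parts, exploiting that each frequency-localized multiplier $\psi_{k_j}(D_{\xi_j})$ (for $k_j\ge 1$) has a kernel with a gain of $2^{-k_j(|\alpha_j|)}$ when tested against $|\alpha_j|$ derivatives of $\sigma$. Concretely, write $\psi_{k_j}(D_{\xi_j})\sigma = 2^{k_j n}\check\psi(2^{k_j}\cdot)*_{\xi_j}\sigma$, and since $\psi$ is supported away from the origin, $\check\psi(y) = |y|^{-2L}\check{(\Delta^L\psi)}(y)$ for any $L$, so that after moving $L$ Laplacians onto $\sigma$ one picks up a factor $2^{-2Lk_j}$ times a convolution of $\partial_{\xi_j}^{2L}\sigma$ against an $L^1$ kernel (with uniformly bounded $L^1$ norm, by a change of variables). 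First I would fix, for each $j\in\{1,\dots,N\}$, an integer $L_j$ with $2L_j \geq [s_j]+1$ if $s_j>0$ — actually one should be slightly careful: the growth hypothesis is only assumed for $|\alpha_j|\le [s_j]+1$, so the number of $\xi_j$-derivatives one may spend is at most $[s_j]+1$; I would therefore interpolate between the $L=0$ bound and the bound with $\lfloor([s_j]+1)/2\rfloor$ Laplacians, or more simply use that the kernel of $\psi_{k_j}(D_{\xi_j})$ also satisfies $\int |\check\psi_{k_j}(y)|\,|y|^{a}\,dy \lesssim 2^{-a k_j}$ for any real $a\ge 0$, and apply the estimate $|\check\psi(y)|\lesssim \langle y\rangle^{-M}$ together with the support condition to get, for $k_j\ge 1$,
\[
\big|\psi_{k_j}(D_{\xi_j})\sigma(x,\vecxi)\big|
\lesssim 2^{-k_j s_j}\sup_{|\alpha_j|\le [s_j]+1}\ \big(|\partial_{\xi_j}^{\alpha_j}\sigma|*_{\xi_j}\Phi_{k_j}\big)(x,\vecxi),
\]
where $\Phi_{k_j}$ is an $L^1$-normalized bump at scale $2^{-k_j}$; this interpolation step (handling non-integer $s_j$) is the one requiring a little care. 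For $k_j=0$ one simply uses boundedness of $\psi_0(D_{\xi_j})$ on $L^\infty_{ul}$.

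Next I would apply these one-variable estimates successively in $\xi_1,\dots,\xi_N$ and in $x$ (using $s_0$ and the $\alpha_0$-derivative hypothesis in the same way), so that
\[
\big|\Delta_{\veck}\sigma(x,\vecxi)\big|
\lesssim 2^{-\vecs\cdot\veck}\ \sup_{|\alpha_j|\le[s_j]+1}\ \big(|\partial_x^{\alpha_0}\partial_{\xi_1}^{\alpha_1}\cdots\partial_{\xi_N}^{\alpha_N}\sigma|*\Phi_{\veck}\big)(x,\vecxi),
\]
with $\Phi_{\veck}$ an $L^1$-normalized product bump. Now I invoke the growth hypothesis: in the case $|\partial^{\alpha}\sigma|\le(1+|\xi_1|+\dots+|\xi_N|)^m$, convolving in $x$ and in each $\xi_j$ against an $L^1$-normalized kernel whose bump is at unit-or-smaller scale changes $\langle\vecxi\rangle^{m}$ by at most a fixed constant (since $\langle\vecxi\rangle$ is slowly varying), so $\langle\vecxi\rangle^{-m}\Delta_{\veck}\sigma$ is bounded pointwise by $C\,2^{-\vecs\cdot\veck}$, hence its $L^2_{ul,\vecxi}L^\infty_x$ norm is $\lesssim 2^{-\vecs\cdot\veck}$ (recall $L^\infty\subset L^2_{ul}$). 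Summing the geometric series $\sum_{\veck}(2^{\vecs\cdot\veck})^t 2^{-t\vecs\cdot\veck}$ — wait, that does not converge; instead one must gain a little extra decay. The honest fix is to choose, for each $j$ with $s_j>0$, to spend $[s_j]+1 > s_j$ derivatives, obtaining the strictly better bound $2^{-k_j([s_j]+1)}$ and hence $\langle\vecxi\rangle^{-m}\Delta_{\veck}\sigma \lesssim \prod_j 2^{-k_j([s_j]+1)}$ with $[s_j]+1>s_j$; for indices with $s_j=0$ one has $2^{-k_j\cdot 0}=1$ but $s_j=0$ contributes nothing to $\vecs\cdot\veck$ either, so those terms need no decay. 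Thus $\sum_{\veck}(2^{\vecs\cdot\veck})^t\big(\prod_{j:s_j>0}2^{-k_j([s_j]+1)}\big)^t<\infty$ because on each coordinate with $s_j>0$ the exponent $s_j-([s_j]+1)<0$, and coordinates with $s_j=0$ can be restricted to $k_j=0$ (or handled by the trivial $L^\infty_{ul}$ bound of $\psi_0$, giving a single term). The multi-growth case $\prod_j(1+|\xi_j|)^{m_j}$ is identical, replacing $\langle\vecxi\rangle^{-m}$ by $\prod_j\langle\xi_j\rangle^{-m_j}$ and using that each one-variable convolution changes $\langle\xi_j\rangle^{m_j}$ by a bounded factor.

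The main obstacle is the bookkeeping around \emph{non-integer smoothness} $s_j$ and the constraint that only $[s_j]+1$ derivatives are available: one cannot naively put $2L_j = s_j$ Laplacians on $\sigma$. Two clean ways around it: (i) use the real-parameter kernel moment bound $\int|\check\psi_{k_j}|\,|y|^{s_j}\,dy\lesssim 2^{-s_j k_j}$ directly, then Taylor-expand $\sigma$ in $\xi_j$ to order $[s_j]$ with integral remainder, so the remainder involves exactly the $([s_j]+1)$-th derivative and produces the factor $2^{-s_jk_j}$ after integrating the kernel against $|y|^{s_j}$ (the lower-order Taylor terms are killed by $\int\check\psi_{k_j}(y)y^\beta\,dy=0$ for $|\beta|\le[s_j]$, which holds since $\psi$ vanishes near the origin); or (ii) simply observe one actually has the stronger integer-order bound $2^{-([s_j]+1)k_j}$, which suffices for summability since $[s_j]+1>s_j$ — this is what I would write, as it sidesteps Taylor remainders entirely. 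I would also remark, per the final sentence of the statement, that throughout ``derivative'' means the distributional derivative, assumed locally integrable, so that the convolution identities for $\psi_{k_j}(D_{\xi_j})$ are justified by mollification and dominated convergence. A final small point: for the $x$-variable the ambient norm is $L^\infty_x$, so after the pointwise bound one takes a supremum in $x$ which is harmless since all constants are $x$-independent; for the $\vecxi$-variables the $L^2_{ul}$ norm of a bounded-by-$C\cdot 2^{-\vecs\cdot\veck}$ function over a unit cube is again $\lesssim 2^{-\vecs\cdot\veck}$, completing the estimate.
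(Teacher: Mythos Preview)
Your approach is the standard one and matches what the paper's references do (the paper itself omits the proof, citing \cite{KMT-JPDOA, KMT-JMSJ}; the same Taylor-expansion/moment-cancellation mechanism appears explicitly in the sharpness section, cf.\ \eqref{Taylor-theta0}). The key point---that $\psi_{k_j}(D_{\xi_j})$ annihilates Taylor polynomials of every order because $\psi$ vanishes near the origin, so spending the available $[s_j]+1$ derivatives yields the pointwise bound $|\Delta_{\veck}\sigma(x,\vecxi)|\lesssim \langle\vecxi\rangle^{m}\prod_{j=0}^{N}2^{-k_j([s_j]+1)}$---is exactly right, and the passage from a pointwise bound to the $L^2_{ul,\vecxi}L^\infty_x$ norm via $L^\infty\hookrightarrow L^2_{ul}$ is fine.

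There is one genuine slip. Your treatment of coordinates with $s_j=0$ is incorrect: you write ``those terms need no decay'' and then restrict the product to $\{j:s_j>0\}$, claiming the $s_j=0$ coordinates ``can be restricted to $k_j=0$''. For $t<\infty$ the sum over $k_j\in\N_0$ must still converge, and with no decay it does not. The fix is already contained in your own argument: for $s_j=0$ one still has $[s_j]+1=1$ derivative available, so the same mechanism gives decay $2^{-k_j}$ and hence $s_j-([s_j]+1)=-1<0$ on that coordinate as well. In other words, the product $\prod_{j=0}^{N}2^{-k_j([s_j]+1)}$ should run over \emph{all} $j$, and then $\sum_{\veck}2^{t\sum_j(s_j-([s_j]+1))k_j}<\infty$ because each exponent $s_j-([s_j]+1)$ is strictly negative for every $s_j\ge 0$. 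With that correction the proof is complete.
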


Since statements quite similar to Proposition \ref{classicalderivative} 
are already proved in
\cite[Proposition 4.7]{KMT-JPDOA}
and \cite[Proposition 5.4]{KMT-JMSJ}, 
we omit the proof here.

\section{Lemmas for the proof of Proposition \ref{main-prop}}
\label{seclemmasforProof}

In this section, we
collect some lemmas to prove Proposition \ref{main-prop}. 
The following will be used to decompose symbols,
which was essentially proved in \cite[Lemma 2.2.1]{Sugimoto-JMSJ}.
The explicit proof can be found in \cite[Lemma 4.4]{KMT-JMSJ}.

\begin{lem} \label{unifdecom}
There exist functions $\kappa \in \calS(\R^n)$ and $\chi \in \calS(\R^n)$ 
such that
$\supp \kappa \subset [-1,1]^n$, $\supp \widehat \chi \subset B(0,1)$, 
$| \chi | \geq c > 0$ on $[-1,1]^n$
and
\begin{equation*}
\sum_{\nu\in\Z^n} \kappa (\xi - \nu) \chi (\xi - \nu ) = 1, \quad \xi \in \R^n.
\end{equation*}
\end{lem}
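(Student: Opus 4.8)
\textbf{Proof proposal for Lemma \ref{unifdecom}.}

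The plan is to construct the two functions explicitly from a single auxiliary bump function. First I would fix $\varphi \in \calS(\R^n)$ with $\supp\varphi \subset [-1/2,1/2]^n$ and $\varphi \equiv 1$ on a neighborhood of the origin, say on $[-1/4,1/4]^n$; then the periodization $\Phi(\xi) = \sum_{\nu\in\Z^n}\varphi(\xi-\nu)$ is smooth, $\Z^n$-periodic, and bounded below by a positive constant (since the translates of $\{\varphi\equiv 1\}$ still cover $\R^n$). Setting $\kappa_0(\xi) = \varphi(\xi)/\Phi(\xi)$ gives a Schwartz function supported in $[-1/2,1/2]^n$ with $\sum_{\nu}\kappa_0(\xi-\nu) = 1$. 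This already produces a smooth partition of unity, but it does not yet have the factored form $\kappa\cdot\chi$ with $\widehat\chi$ compactly supported, which is the real content of the lemma.

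The second step is to split off a factor whose Fourier transform is compactly supported. Choose $\chi\in\calS(\R^n)$ with $\supp\widehat\chi \subset B(0,1)$ and $\chi$ real-valued, positive, and bounded below on a large cube containing $[-1,1]^n$ — for instance take $\widehat\chi$ to be a nonnegative bump supported in $B(0,1)$ with $\int\widehat\chi \ne 0$, normalized so that $\chi(0)=1$; since $\chi$ is continuous with $\chi(0)=1$, after a harmless dilation of $\widehat\chi$ inside $B(0,1)$ one arranges $|\chi|\ge c>0$ on $[-1,1]^n$. Now define $\kappa(\xi) = \kappa_0(\xi)/\chi(\xi)$. Because $\kappa_0$ is supported in $[-1/2,1/2]^n\subset[-1,1]^n$, where $\chi$ is bounded away from zero, the quotient is a well-defined Schwartz function supported in $[-1,1]^n$, and by construction
\[
\sum_{\nu\in\Z^n}\kappa(\xi-\nu)\chi(\xi-\nu) = \sum_{\nu\in\Z^n}\kappa_0(\xi-\nu) = 1,\qquad \xi\in\R^n.
\]
This gives all the required properties.

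The only delicate point is ensuring simultaneously that $\chi$ is bounded below on $[-1,1]^n$ (so the division defining $\kappa$ stays in $\calS$) and that $\supp\widehat\chi\subset B(0,1)$; by uncertainty-principle heuristics one cannot make $\chi$ too concentrated, but since we only need a lower bound on the fixed compact set $[-1,1]^n$ and the constant $c$ is allowed to be small, a sufficiently flat choice of $\widehat\chi$ works — e.g. start from any fixed nonnegative $\widehat\chi_1$ supported in $B(0,1/2)$ with $\widehat\chi_1(0)>0$ and replace it by $\widehat\chi_1(\varepsilon\,\cdot)$ for small $\varepsilon$, which makes $\chi$ uniformly close to the constant $\chi_1(0)$ on $[-1,1]^n$ while keeping $\supp\widehat\chi\subset B(0,1)$. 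I expect this balancing to be the main (though still routine) obstacle; everything else is a standard periodization-and-normalization argument. For the explicit details I would simply refer to \cite[Lemma 4.4]{KMT-JMSJ} or \cite[Lemma 2.2.1]{Sugimoto-JMSJ}, as the paper already does.
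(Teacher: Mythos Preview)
The paper does not prove this lemma; it simply cites \cite[Lemma 2.2.1]{Sugimoto-JMSJ} and \cite[Lemma 4.4]{KMT-JMSJ}, exactly as you do at the end. Your sketch is the standard construction and is correct in outline, but two parameter choices are off and would make the argument fail as written.

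First, with $\supp\varphi\subset[-1/2,1/2]^n$ the integer translates of $[-1/4,1/4]^n$ (indeed of $\supp\varphi$ itself) do \emph{not} cover $\R^n$; since a smooth $\varphi$ must vanish to infinite order at the boundary of its support, $\Phi$ vanishes on the cube faces $\xi_j\in\tfrac12+\Z$ and the quotient $\varphi/\Phi$ is undefined there. You need overlapping supports, e.g.\ $\supp\varphi\subset[-1,1]^n$ with $\varphi>0$ on $[-1/2,1/2]^n$; then $\kappa_0$ is still supported in $[-1,1]^n$ as required.

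Second, replacing $\widehat\chi_1$ by $\widehat\chi_1(\varepsilon\,\cdot)$ with \emph{small} $\varepsilon$ enlarges the Fourier support to $B(0,1/(2\varepsilon))\not\subset B(0,1)$, and the resulting $\chi(x)=\varepsilon^{-n}\chi_1(x/\varepsilon)$ is far from constant on $[-1,1]^n$. You want the opposite dilation $\widehat\chi_1(\cdot/\varepsilon)$ with small $\varepsilon$, giving $\supp\widehat\chi\subset B(0,\varepsilon/2)\subset B(0,1)$ and $\chi(x)=\varepsilon^{n}\chi_1(\varepsilon x)$, which is uniformly close to $\varepsilon^n\chi_1(0)>0$ on $[-1,1]^n$. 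With these two fixes your argument is complete and matches what the cited references do.
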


The two lemmas below
play important roles to obtain the boundedness 
for the multilinear H\"ormander class with the critical order
(Theorem B) in \cite{KMT-JFA}.
We will again use them in the present paper.
See \cite[Lemmas 2.4 and 2.5]{KMT-JFA}
for these proofs.

\begin{lem} \label{productLweakp'}
Let $N \ge 2$, $1<r<\infty$, 
and let $a_1, \dots, a_N \in \R$ satisfy
$-n/2< a_j < 0$ and 
$\sum_{j=1}^N a_j=n/r-Nn/2$. 
Then the following holds for all nonnegative functions 
$A_1, \dots, A_N$ on $\Z^n$: 
\[
\sum_{ \nu_1, \, \dots, \, \nu_N \in \Z^n }
\, 
A_0(\nu_1+\dots+ \nu_N)
\,
\prod_{j=1}^N (1+ |\nu_j| )^{a_{j}} A_j(\nu_j)
\lesssim 
\| A_0 \|_{ \ell^{r} (\Z^n) }
\prod_{j=1}^N
\|A_j\|_{\ell^2 (\Z^n) }.  
\]
\end{lem}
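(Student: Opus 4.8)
The plan is to exploit the convolution structure hidden in the sum. Writing the left-hand side as
\[
\sum_{\nu_1,\dots,\nu_N} A_0(\nu_1+\dots+\nu_N)\prod_{j=1}^N B_j(\nu_j),
\qquad B_j(\nu_j):=(1+|\nu_j|)^{a_j}A_j(\nu_j),
\]
I would recognize it as $\sum_{\mu} A_0(\mu)\,(B_1*\cdots*B_N)(\mu)$, hence bounded by $\|A_0\|_{\ell^r}\,\|B_1*\cdots*B_N\|_{\ell^{r'}}$ by H\"older on $\Z^n$. So the task reduces to showing $\|B_1*\cdots*B_N\|_{\ell^{r'}}\lesssim \prod_{j=1}^N\|A_j\|_{\ell^2}$. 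The exponent bookkeeping is exactly right for this: since $1<r<\infty$ we have $0<1/r'<1$, and the condition $\sum a_j = n/r - Nn/2$ rearranges to $1/r' = 1 - n^{-1}\sum(-a_j) \cdot$(something) — more precisely one checks that the Young-type scaling balances because each $B_j$ will be placed in an $\ell^{q_j}$ with $1/q_j = 1/2 + |a_j|/n \in (1/2,1)$, and $\sum_j 1/q_j = N/2 + (Nn/2 - n/r)/n = N - 1/r = (N-1) + 1/r'$, which is precisely the Young convolution relation $\sum 1/q_j = (N-1) + 1/r'$.

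The key step is therefore: (i) apply Young's inequality for $N$-fold convolution, $\|B_1*\cdots*B_N\|_{\ell^{r'}} \le \prod_j \|B_j\|_{\ell^{q_j}}$ with $1/q_j = 1/2 + |a_j|/n$; and (ii) estimate each single factor by $\|B_j\|_{\ell^{q_j}} = \big\|(1+|\nu|)^{a_j}A_j(\nu)\big\|_{\ell^{q_j}} \lesssim \|A_j\|_{\ell^2}$, which is just H\"older on $\Z^n$ with exponents $q_j < 2$ and the dual exponent $\rho_j$ determined by $1/q_j = 1/2 + 1/\rho_j$, i.e. $1/\rho_j = |a_j|/n$, so $\rho_j = n/|a_j|$; the weight $(1+|\nu|)^{a_j}$ lies in $\ell^{\rho_j}(\Z^n)$ exactly when $|a_j|\rho_j = n > n/\!\cdots$ — wait, one needs $\sum_\nu (1+|\nu|)^{a_j\rho_j} = \sum_\nu (1+|\nu|)^{-n}$, which is $\log$-divergent. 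So a naive H\"older fails at the endpoint, and I would instead slightly perturb: since the hypothesis $-n/2<a_j<0$ is \emph{strict}, replace $\rho_j$ by something marginally larger (equivalently $q_j$ marginally smaller), paying a harmless constant, so that $(1+|\nu|)^{a_j}\in\ell^{\rho_j}$ converges. One must then recheck that the perturbed $q_j$ still satisfy $\sum 1/q_j \ge (N-1)+1/r'$ (Young's inequality only needs $\ge$ after adjusting the target exponent, or one keeps equality and perturbs $r'$ slightly larger — either works since all the strict inequalities give room).

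The main obstacle, as the above indicates, is precisely this endpoint/summability issue: the exponents coming from the scaling relation land exactly on the borderline $\ell^{n/|a_j|}$ of the weight $(1+|\nu|)^{a_j}$, so a direct application of Young + H\"older is off by an $\varepsilon$. The fix is to use the strictness of $-n/2<a_j<0$ to trade a tiny bit of decay, but this must be done consistently across all $N$ factors so that the convolution exponents still close up; tracking that all the perturbed exponents remain in the legal ranges for Young's inequality (all $q_j\in(1,\infty)$, target in $(1,\infty)$) is the only genuinely delicate point. Alternatively, and perhaps more cleanly, one can avoid Young entirely and argue by iterated H\"older directly on the $N$-fold sum: bound $\sum_{\nu_1,\dots,\nu_N}$ by peeling off one variable at a time, at each stage using $\sum_{\nu_j}(1+|\nu_j|)^{a_j}A_j(\nu_j)(\cdots) \lesssim \|A_j\|_{\ell^2}\|(\cdots)\|_{\ell^2}$ via Cauchy--Schwarz together with $(1+|\nu_j|)^{a_j}\in\ell^{n/|a_j|}$ after the same $\varepsilon$-perturbation — this reduces everything to scalar Cauchy--Schwarz inequalities and a single convergent weighted sum, at the cost of being slightly more hands-on. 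I would present whichever of the two is shorter to write out carefully.
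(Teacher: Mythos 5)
The paper itself does not prove this lemma; it refers to \cite[Lemmas 2.4 and 2.5]{KMT-JFA}, so there is no in-paper argument to compare against. But your proposal has a genuine gap that the announced $\varepsilon$-perturbation does not close. You correctly diagnose that the exponents are exactly critical: with $1/q_j = 1/2 + |a_j|/n$ and $1/\rho_j = |a_j|/n$, the weight $(1+|\nu|)^{a_j}$ has $\ell^{\rho_j}$-norm $\big(\sum_\nu (1+|\nu|)^{-n}\big)^{1/\rho_j}$, which diverges. But the fix you sketch is inconsistent. You want $\rho_j$ slightly larger so that $(1+|\nu|)^{a_j} \in \ell^{\rho_j}$; however, since $1/q_j = 1/\rho_j + 1/2$, increasing $\rho_j$ \emph{increases} $q_j$ (your parenthetical "equivalently $q_j$ marginally smaller" has the sign reversed). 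Increasing the $q_j$ decreases $\sum_j 1/q_j$ strictly below $(N-1)+1/r'$, and on $\Z^n$ that is the \emph{wrong} direction: Young with slack $\sum 1/q_j \ge (N-1)+1/r'$ is salvageable by nesting, but $\sum 1/q_j < (N-1)+1/r'$ is not, and you cannot compensate by moving $r'$ because that would require bounding $\|A_0\|_{\ell^{r_\varepsilon}}$ with $r_\varepsilon < r$ by $\|A_0\|_{\ell^r}$, which is again the wrong nesting. There is no slack to distribute: the hypothesis $\sum a_j = n/r - Nn/2$ is an equality and $r$ is fixed.

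In fact the intermediate step $\|(1+|\nu|)^{a_j}A_j(\nu)\|_{\ell^{q_j}} \lesssim \|A_j\|_{\ell^2}$ is simply false. Taking $A_j(\nu) = (1+|\nu|)^{-n/2}\big(\log(2+|\nu|)\big)^{-\beta}$ with $1/2 < \beta \le 1/q_j$ (possible since $q_j<2$) gives $A_j\in\ell^2$, while $\|(1+|\nu|)^{a_j}A_j\|_{\ell^{q_j}}^{q_j} = \sum_\nu (1+|\nu|)^{q_j(a_j-n/2)}\big(\log(2+|\nu|)\big)^{-q_j\beta} = \sum_\nu (1+|\nu|)^{-n}\big(\log(2+|\nu|)\big)^{-q_j\beta}$ diverges because $q_j\beta\le 1$. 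So the three-step reduction (H\"older, then $\ell^p$-Young, then $\ell^p$-H\"older on each factor) cannot be made to work no matter how the exponents are tweaked. What actually rescues the argument is the weak-type/Lorentz refinement: one has $(1+|\nu|)^{a_j} \in \ell^{n/|a_j|,\infty}$, so H\"older in Lorentz spaces gives $B_j \in \ell^{q_j,2}$ (not $\ell^{q_j}$), and then O'Neil's convolution inequality (the Lorentz-space form of Young) closes the argument at the endpoint. The same issue, and the same remedy, applies to your "iterated H\"older" alternative, which invokes $(1+|\nu_j|)^{a_j}\in\ell^{n/|a_j|}$ and so fails for the same reason. Your overall structure (view the sum as $\langle A_0, B_1*\cdots*B_N\rangle$ and use a convolution inequality) is the right one; the missing ingredient is the passage to weak $\ell^p$.
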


\begin{lem} \label{productLweak1}
Let $N \ge 2$. 
Then the following holds for all nonnegative functions 
$A_1, \dots, A_N$ on $\Z^n$: 
\begin{equation*}
\sum_{
\nu_1, \,\dots, \,\nu_N \in \Z^n}
(1+ |\nu_1| + \dots + |\nu_N| )^{-Nn/2} 
\prod_{j=1}^N 
A_j(\nu_j)
\lesssim \prod_{j=1}^N
\|A_j\|_{\ell^2 (\Z^n) } . 
\end{equation*}
\end{lem}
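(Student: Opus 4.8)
\textbf{Proof plan for Lemma \ref{productLweak1}.}

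The plan is to reduce the claimed bound to the elementary one-dimensional estimate $\sum_{\nu\in\Z^n}(1+|\nu|)^{-a}A(\nu)\lesssim\|A\|_{\ell^2(\Z^n)}$, valid whenever $a>n/2$ (by Cauchy--Schwarz, since $\sum_{\nu}(1+|\nu|)^{-2a}<\infty$). The difficulty is that the weight $(1+|\nu_1|+\dots+|\nu_N|)^{-Nn/2}$ couples all the variables, and the exponent $Nn/2$ is exactly the sum of $N$ copies of $n/2$, so distributing it naively as $n/2$ per variable is borderline and fails. First I would split the summation region according to which variable carries the largest norm: by symmetry it suffices to bound the contribution of the region where $|\nu_1|=\max_{1\le j\le N}|\nu_j|$ and multiply by $N$. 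On that region one has $1+|\nu_1|+\dots+|\nu_N|\approx 1+|\nu_1|$, so the weight is $\lesssim(1+|\nu_1|)^{-Nn/2}$, and crucially we may also estimate $(1+|\nu_1|)^{-Nn/2}=(1+|\nu_1|)^{-n/2}\prod_{j=2}^{N}(1+|\nu_1|)^{-n/2}\le(1+|\nu_1|)^{-n/2}\prod_{j=2}^{N}(1+|\nu_j|)^{-n/2}$, where the last inequality uses $|\nu_1|\ge|\nu_j|$.

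With this pointwise bound in hand, the sum over the chosen region factors:
\begin{equation*}
\sum_{\substack{\nu_1,\dots,\nu_N\in\Z^n\\ |\nu_1|=\max_j|\nu_j|}}
(1+|\nu_1|+\dots+|\nu_N|)^{-Nn/2}\prod_{j=1}^{N}A_j(\nu_j)
\le
\Big(\sum_{\nu_1}(1+|\nu_1|)^{-n/2}A_1(\nu_1)\Big)
\prod_{j=2}^{N}\Big(\sum_{\nu_j}(1+|\nu_j|)^{-n/2}A_j(\nu_j)\Big).
\end{equation*}
Each factor is then controlled by Cauchy--Schwarz: $\sum_{\nu}(1+|\nu|)^{-n/2}A_j(\nu)\le\big(\sum_\nu(1+|\nu|)^{-n}\big)^{1/2}\|A_j\|_{\ell^2}\lesssim\|A_j\|_{\ell^2}$, since $\sum_{\nu\in\Z^n}(1+|\nu|)^{-n}$ converges. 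This yields the bound $\prod_{j=1}^{N}\|A_j\|_{\ell^2}$ on that region, and summing the $N$ symmetric regions gives the lemma.

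The main obstacle is the borderline nature of the exponent: one cannot afford to lose any power when passing from the joint weight to a product of single-variable weights, which is exactly why the argument must first localize to the region where one variable dominates before redistributing the exponent — there the inequality $|\nu_1|\ge|\nu_j|$ is what makes $(1+|\nu_1|)^{-n/2}\le(1+|\nu_j|)^{-n/2}$ available for free. Once that localization is done, everything else is the routine convergence of $\sum(1+|\nu|)^{-n}$ over $\Z^n$ together with Cauchy--Schwarz, so no further subtlety is expected.
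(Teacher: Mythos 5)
Your localization idea — split the sum by which variable $\nu_j$ has the largest norm, then redistribute the decaying weight using $|\nu_1| \ge |\nu_j|$ — is a reasonable instinct, but the proof as written contains a concrete error that invalidates the conclusion. The final step asserts that $\sum_{\nu \in \Z^n} (1+|\nu|)^{-n}$ converges; it does not. The number of lattice points with $|\nu| \approx 2^k$ is comparable to $2^{kn}$, so the contribution of the $k$-th dyadic shell to that sum is comparable to $2^{kn}\cdot 2^{-kn}=1$, and the sum diverges. The estimate $\sum_{\nu}(1+|\nu|)^{-a}A(\nu)\lesssim\|A\|_{\ell^2(\Z^n)}$ requires strictly $a>n/2$; you invoke it at exactly $a=n/2$, which is the forbidden borderline case you yourself flagged as the obstacle. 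In effect, once you discard the constraint $|\nu_j|\le|\nu_1|$ and factor the sum into $N$ independent single-variable sums each weighted by $(1+|\nu_j|)^{-n/2}$, you have recreated the naive "distribute $n/2$ per variable" attempt that you correctly identified as failing; the localization did not buy you anything because it was immediately thrown away.

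Keeping the constraint $|\nu_j|\le|\nu_1|$ does not rescue this particular distribution of exponents either: applying Cauchy--Schwarz to $\sum_{|\nu_j|\le|\nu_1|}(1+|\nu_j|)^{-n/2}A_j(\nu_j)$ gives a logarithmic factor $(\log(2+|\nu_1|))^{1/2}$, and carrying those logs back into the $\nu_1$-sum still leaves a divergent quantity. What the lemma genuinely requires is a finer, dyadic use of the maximum. Decompose each $\nu_j$ into dyadic shells $|\nu_j|\approx 2^{k_j}$ and set $a_j(k)=\|A_j\,\ichi_{|\cdot|\approx 2^k}\|_{\ell^2(\Z^n)}$, so that $\sum_{|\nu_j|\approx 2^{k_j}}A_j(\nu_j)\le 2^{k_jn/2}a_j(k_j)$ and $\|a_j\|_{\ell^2(\N_0)}=\|A_j\|_{\ell^2(\Z^n)}$. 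On the region where $k_1=\max_jk_j$, the weight contributes $2^{-k_1Nn/2}\prod_j 2^{k_jn/2}=\prod_{j\ge2}2^{(k_j-k_1)n/2}$, and the sum reduces to a one-dimensional convolution-type expression $\sum_{k_1}a_1(k_1)\prod_{j\ge2}(c*a_j)(k_1)$ with $c(k)=2^{-kn/2}\ichi_{k\ge0}\in\ell^1(\Z)$, controlled by H\"older in $k_1$ (with exponents $2,\,2(N-1),\dots,2(N-1)$) and Young's inequality $\|c*a_j\|_{\ell^2}\le\|c\|_{\ell^1}\|a_j\|_{\ell^2}$. The essential gain that is invisible in your argument is that the sum over dyadic \emph{scales} is one-dimensional, so $\sum_k 2^{-kn/2}<\infty$ is summable (not borderline), whereas the sum over $\nu\in\Z^n$ with weight $(1+|\nu|)^{-n}$ is not.
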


For $0 < r < \infty$, 
we denote by $S_r$ the operator 
\begin{equation*}
S_{r} (f) (x) 
= \Big( \int_{\R^n} 
\frac{ |f(x-z)|^{r} }{ \langle z \rangle^{n+1} } 
\, dz \Big)^{1/r}
\end{equation*}
for $f \in \calS(\R^n)$.
Obviously, $S_{r}$ is bounded on $L^p$ for $p \ge r$.
The lemma below was
proved in \cite[Lemma 4.1]{KMT-JMSJ} 
for the case $r=2$. 
We extend it to the general case $0 < r < \infty$.

\begin{lem} \label{unidec->S}
Let $0 < r < \infty$ and 
$\kappa \in \calS(\R^n)$ satisfy
$\supp \kappa \subset [-1,1]^n$.
Then
\begin{equation} \label{est_unidec->S}
\left| \kappa ( D-\nu ) f(x) \right|
\lesssim
S_r ( \kappa ( D-\nu ) f ) (y)
\end{equation}
holds for any $f \in \calS(\R^n)$, 
$\nu \in \Z^n$, and 
$x, y \in \R^n$ satisfying $|x-y|\lesssim1$.
\end{lem}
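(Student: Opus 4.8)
The plan is to deduce Lemma~\ref{unidec->S} for general $0<r<\infty$ from the known case $r=2$ by a routine rescaling/interpolation-free argument, so the bulk of the work is reducing to a pointwise estimate that only uses the localization of the Fourier support. First I would observe that $g:=\kappa(D-\nu)f$ has Fourier transform supported in the unit cube $\nu+[-1,1]^n$, hence $e^{-i\nu\cdot x}g(x)$ has Fourier support in $[-1,1]^n$; modulating by a unimodular factor changes neither $|g(x)|$ nor $S_r(g)(y)$, so without loss of generality I may assume $\widehat g$ is supported in $[-1,1]^n$. For such a band-limited $g$ one has the Nikolskii-type pointwise reproduction: choosing $\Phi\in\calS(\R^n)$ with $\widehat\Phi\equiv1$ on $[-1,1]^n$, we get $g=\Phi*g$ and therefore $|g(x)|\le\int_{\R^n}|\Phi(x-w)|\,|g(w)|\,dw$ for every $x$.

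Next I would upgrade this to an $L^r$-type bound with a weight centered at the comparison point $y$. Fix $x,y$ with $|x-y|\lesssim1$. Split $|g(x)|\le\int|\Phi(x-w)||g(w)|\,dw$ and use H\"older (or, for $0<r<1$, the elementary inequality $(\sum a_i)^r\le\sum a_i^r$ together with the fact that $\Phi$ is Schwartz, hence $|\Phi|^{1-r}$ is integrable against a slowly decaying weight) to pull out $|g(w)|^r$ against the weight $\langle y-w\rangle^{-n-1}$. Concretely, writing $w=y-z$ and using $|x-w|\ge|y-w|-|x-y|\gtrsim|z|-O(1)$ together with the rapid decay of $\Phi$, one bounds $|\Phi(x-w)|\lesssim C_M\langle z\rangle^{-M}$ for any $M$, and then for $M$ large compared to $(n+1)/r$ and $n$ a H\"older split against $\langle z\rangle^{-(n+1)/r}$ gives
\[
|g(x)|\lesssim\Big(\int_{\R^n}\frac{|g(y-z)|^r}{\langle z\rangle^{n+1}}\,dz\Big)^{1/r}\Big(\int_{\R^n}\langle z\rangle^{-M'}\,dz\Big)^{1-1/r}=c\,S_r(g)(y),
\]
with the second factor finite once $M'>n$, which is arranged by taking $M$ large. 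For $0<r<1$ one instead estimates $|g(x)|^r\le\int|\Phi(x-w)|^r|g(w)|^r\,dw$ directly and compares $|\Phi(x-w)|^r$ with $\langle z\rangle^{-n-1}$ via the same shift inequality, so no H\"older step is needed; either way the constants depend only on $n$, $r$, and $\kappa$ (through $\Phi$), uniformly in $\nu$ and in $|x-y|\lesssim1$.

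The only genuinely delicate point is making the dependence on $|x-y|$ uniform: the weight $\langle z\rangle^{n+1}$ in the definition of $S_r$ is centered at $y$, whereas the natural reproducing integral is centered at $x$, so one must absorb the translation $x-y$ into the Schwartz decay of $\Phi$ — this is exactly where the hypothesis $|x-y|\lesssim1$ enters and where one loses a constant depending on that implicit bound. I expect this bookkeeping (choosing $M$ large enough, handling $0<r<1$ versus $r\ge1$ separately, and checking that nothing blows up as $\nu$ varies) to be the main, though entirely routine, obstacle; everything else is the standard band-limited function machinery already used in \cite[Lemma 4.1]{KMT-JMSJ} for $r=2$.
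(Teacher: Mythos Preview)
Your argument for $r\ge 1$ is correct and close in spirit to the paper's: both rely on the reproducing identity $g=\Phi*g$ for the band--limited function $g=\kappa(D-\nu)f$ and the Schwartz decay of $\Phi$. The paper actually organizes things in the opposite order --- it first treats $0<r\le 1$ and then obtains $r>1$ by the trivial H\"older inequality $S_1(g)\lesssim S_r(g)$ --- but your direct H\"older split for $r\ge 1$ is perfectly fine, and your way of absorbing the shift $x-y$ into the decay of $\Phi$ is equivalent to the paper's observation that $S_r(g)(x)\approx S_r(g)(y)$ when $|x-y|\lesssim 1$.

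The genuine gap is in the range $0<r<1$. The step you call ``direct'', namely
\[
|g(x)|^r\le\int_{\R^n}|\Phi(x-w)|^r\,|g(w)|^r\,dw,
\]
does not follow from $g=\Phi*g$: the subadditivity $(\sum a_i)^r\le\sum a_i^r$ is a statement about countable sums and has no integral analogue (for instance, if $\Phi$ is replaced by a narrow bump $\epsilon^{-n}\Phi(\cdot/\epsilon)$, the right--hand side scales like $\epsilon^{n(1-r)}\to 0$ while the left--hand side stays fixed). Any correct argument here must use the band--limitation of $g$ in a nontrivial way, not just the reproducing formula. The paper does exactly this: writing $\kappa(D-\nu)f(x)=\int(\calF^{-1}\varphi)(z)\,\kappa(D-\nu)f(x-z)\,dz$ with $\varphi\equiv 1$ on $[-1,1]^n$ and $\supp\varphi$ compact, it observes that the integrand $z\mapsto(\calF^{-1}\varphi)(z)\,\kappa(D-\nu)f(x-z)$ has Fourier support in a fixed cube (independent of $\nu$), and then invokes Nikol'skij's inequality $\|h\|_{L^1}\lesssim\|h\|_{L^r}$ for such $h$ to pass from the $L^1$ bound to the $L^r$ bound. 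That Nikol'skij step is the missing ingredient in your sketch; once you insert it, the rest of your outline goes through.
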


\begin{proof}
Taking $\varphi \in \calS(\R^n)$ satisfying 
$\varphi=1$ on $[-1,1]^n$ and 
$\supp \varphi \subset [-2,2]^n$,
we have
\begin{align*}&
\kappa ( D-\nu ) f(x)
= 
\varphi ( D-\nu )\kappa ( D-\nu ) f(x) .
\end{align*}
If $0 < r \le1$,
Nikol'skij's inequality gives
\begin{align*}
| \kappa ( D-\nu ) f(x) |
&\le
\left\| 
(\calF^{-1} \varphi) (z) \kappa ( D-\nu ) f(x-z)
\right\|_{L^{1}_{z} (\R^n)} 
\\& \lesssim
\left\| 
(\calF^{-1} \varphi) (z) \kappa ( D-\nu ) f(x-z) 
\right\|_{L^{r}_{z} (\R^n)} 
\lesssim
S_r( \kappa ( D-\nu ) f ) (x).
\end{align*}
The implicit constants above depend
only on $r$, $n$, and 
the diameters of $\supp \varphi$ and $\supp \kappa$.
Since
$S_r( f ) (x) \approx S_r( f ) (y)$
for $|x-y|\lesssim1$,
we obtain \eqref{est_unidec->S} for $0 < r \le 1$.
Using H\"older's inequality and repeating the statement above,
we obtain \eqref{est_unidec->S} for $1 < r < \infty$.
\end{proof}

The lemma below might be well-known
(see \cite[Subsection 2.3]{KMT-JMSJ}).
Also, the readers familiar with Wiener amalgam spaces
may realize that the following inequality is immediately deduced from
the embedding $W^{p,2} \hookrightarrow h^{p}$, $0 < p \le 2$,
proved in \cite[Theorem 1.2]{GWYZ-JFA}.

\begin{lem} \label{Amalgam-Hardy}
Let $0 < p \le 2$.
If $g \in \calS(\R^n)$ satisfies that
$| g | \geq c > 0$ on $[-1,1]^n$
with some positive constant $c$,
then 
\[
\| f \|_{h^p (\R^n) }
\lesssim
\| V_{g} f (x,\xi)
\|_{L^{2}_{\xi} (\R^n) L^{p}_{x} (\R^n)} .
\]
\end{lem}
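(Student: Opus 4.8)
The plan is to reduce the statement to the known embedding $W^{p,2} \hookrightarrow h^{p}$ for $0 < p \le 2$ (cited from \cite[Theorem 1.2]{GWYZ-JFA}), after replacing the specific window $g$ by the standard cube-partition window $\kappa$ used in the definition of $W^{p,2}$. First I would recall that, by the discretization $\R^{n} = \bigsqcup_{\nu \in \Z^{n}} (\nu + Q)$, one has the equivalence $\| V_{g} f (x,\xi) \|_{L^{2}_{\xi} L^{p}_{x}} \approx \| V_{g} f(x+\nu,\xi) \|_{L^{2}_{\xi}(\R^{n}) L^{p}_{x}(Q) \ell^{p}_{\nu}(\Z^{n})}$ for $0 < p \le 2$ (using $\ell^{p} \hookrightarrow \ell^{2}$ to pass $\ell^{p}_{\nu}$ inside $L^{2}_{\xi}$ in the direction we need, or directly the amalgam description $W^{p,2} \approx (L^{2},\ell^{p})$ recalled in Subsection \ref{subsecFunctionSpaces}). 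So it suffices to bound $\| f \|_{W^{p,2}}$ by the right-hand side.

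Next I would compare the frequency-uniform decomposition $\kappa(D-k)f$ with the short-time Fourier transform $V_{g}f$. The standard identity here is that $\kappa(D-k)f(x)$ can be written as an average of modulated translates of $V_{g}f$: choosing an auxiliary $\gamma \in \calS$ with $\widehat{\gamma}$ supported near the unit cube and $\widehat{\gamma} \cdot \overline{\widehat{g}}\,$-type normalization so that $g$ is ``invertible'' against $\gamma$ on the relevant frequency band (this is where the hypothesis $|g| \ge c > 0$ on $[-1,1]^{n}$, hence $|\widehat{\cdot}|$ nonvanishing on a cube after a harmless reduction, is used), one gets a pointwise bound
\[
| \kappa(D-k) f(x) |
\lesssim
\int_{\R^{n}} | V_{g} f (y, \xi_{k}) | \, w(x-y) \, dy
\]
for a fixed Schwartz weight $w$ and a lattice point $\xi_{k}$ near $k$; more precisely one obtains control of the full $L^{2}_{\xi}$-norm of $\kappa(D-k)f$ over a unit $\xi$-cell by a convolution of $\| V_{g} f(y,\xi) \|_{L^{2}_{\xi}(\text{cell})}$ against $w$. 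Since this is exactly the content of the ``well-known'' remark referenced via \cite[Subsection 2.3]{KMT-JMSJ}, I would state it as such and give only the short computation: insert a resolution of the identity built from $g$, move the support/decay estimates around, and collect the Schwartz tails into $w$.

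Finally I would take $L^{p}_{x}$ norms and sum in $k$. Applying Young's (or, for $p \le 1$, the quasi-triangle inequality together with $\|w * h\|_{L^{p}} \lesssim \|h\|_{L^{p}}$ for Schwartz $w$) turns the convolution against $w$ into a bounded operation on $L^{p}$, giving $\| \kappa(D-k)f \|_{L^{p}_{x}} \lesssim \| \, \| V_{g}f(y,\xi)\|_{L^{2}_{\xi}(\text{cell}_{k})} \|_{L^{p}_{y}}$ uniformly in $k$; then taking $\ell^{2}_{k}$ and using that the $\xi$-cells tile $\R^{n}$ reassembles $\| V_{g} f \|_{L^{2}_{\xi} L^{p}_{x}}$ on the right (here one uses $p \le 2$ to exchange $\ell^{2}_{k}$ and $L^{p}_{x}$ by Minkowski's integral inequality). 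Combined with $W^{p,2} \hookrightarrow h^{p}$, this yields the claim. The main obstacle is the bookkeeping in the comparison step: making the passage from $V_{g}f$ (a continuous transform with an arbitrary admissible window) to $\kappa(D-k)f$ (a fixed dyadic-cube decomposition) clean, in particular handling the inversion against $g$ only from the lower bound $|g| \ge c$ on $[-1,1]^{n}$ rather than a global nonvanishing hypothesis — but this is routine time-frequency analysis and, as noted, is already available in the cited references, so I would keep it brief.
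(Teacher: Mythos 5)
Your opening step (reduce to an amalgam-space embedding into $h^p$) is the same as the paper's, but you then go down the wrong branch and introduce a genuine gap. You choose the \emph{frequency-side} representative of the amalgam norm, $\|\kappa(D-k)f\|_{\ell^2_k L^p_x}$, and then try to compare $\kappa(D-k)f$ with $V_g f$. This comparison is inherently a frequency-side change of window: $\kappa(D-k)$ is a Fourier multiplier, whereas in the short-time Fourier transform \eqref{STFT} the window $g$ sits on the \emph{physical} side, so the kernel relating the two goes through $\widehat g$, not $g$. Your parenthetical claim that the hypothesis $|g|\geq c>0$ on $[-1,1]^n$ gives ``$|\widehat{\,\cdot\,}|$ nonvanishing on a cube after a harmless reduction'' is simply false: a Schwartz function bounded below on $[-1,1]^n$ can have a Fourier transform with zeros (and no useful lower bound) at any prescribed frequencies. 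Consequently the ``inversion against $\gamma$'' step you sketch has no support from the hypothesis, and the proof does not close. (It could be repaired by the full change-of-window/reconstruction machinery, which needs only $g\neq 0$; but that is heavier than the lemma warrants, uses the hypothesis differently from what you wrote, and is not what the cited reference [KMT-JMSJ, Section 2.3] provides — that reference is for $(L^2,\ell^p)\hookrightarrow h^p$, not for any $\kappa(D-k)$ vs.\ $V_g$ comparison.)

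The paper avoids all of this by working with the \emph{physical-side} representative of the same amalgam space: $\|f\|_{(L^2,\ell^p)} = \|f(x+\nu)\|_{L^2_x(Q)\,\ell^p_\nu}$ (which equals the $W^{p,2}$ norm up to equivalence). The key observation is that, by Plancherel in $\xi$, $\|V_g f(x,\xi)\|_{L^2_\xi}$ is comparable to $\|\overline{g(t-x)}f(t)\|_{L^2_t(\R^n)}$, so the STFT norm never touches $\widehat g$ at all. Then for $x,y\in Q$ one has $|g(x-y)|\geq c$ by hypothesis, which gives the pointwise bound $\|f(x+\nu)\|_{L^2_x(Q)}\leq c^{-1}\|\overline{g(x-y)}f(x+\nu)\|_{L^2_x(Q)}$ for each fixed $y\in Q$; one then enlarges $L^2_x(Q)$ to $L^2_x(\R^n)$, takes $\ell^p_\nu$, averages over $y\in Q$ in $L^p$, and uses \eqref{cubicdiscretization} to reassemble $L^p_y(\R^n)$ and identify the result with $\|V_g f(y,\xi)\|_{L^2_\xi L^p_y}$. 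This is entirely elementary and uses the hypothesis on $g$ exactly as stated. The conceptual moral: the lower bound on $g$ in physical space naturally matches the physical-side description $(L^2,\ell^p)$ of the amalgam space, whereas your route via $\kappa(D-k)$ would require a lower bound on $\widehat g$, which you do not have.
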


\begin{proof}
Since $(L^2,\ell^p) \hookrightarrow h^p$ for $0 < p \leq 2$
(see \cite[Section 2.3]{KMT-JMSJ}),
it suffices to prove
\begin{equation}
\label{amalgam-Wiener}
\| f \|_{ (L^2, \ell^p) }
\lesssim
\| V_{g} f (x,\xi)
\|_{L^{2}_{\xi} (\R^n) L^{p}_{x} (\R^n)} .
\end{equation}
Since $|g(x-y)| \ge c$ for $x,y \in Q$,
it holds that
\begin{align*}
\| f \|_{ (L^2, \ell^p) } 
=
\| f(x+\nu)
\|_{ L^{2}_{x} (Q) \ell^{p}_{\nu} (\Z^n) }
\lesssim
\| \overline {g (x-y)}
f(x+\nu)
\|_{ L^{2}_{x} (Q) \ell^{p}_{\nu} (\Z^n)}
\end{align*}
for any $y \in Q$,
which implies from 
the embedding $L^{2} (\R^n) \hookrightarrow L^{2} (Q)$
that
\begin{align*}
\| f \|_{ (L^2, \ell^p) } 
\lesssim
\| \overline {g (x-y)}
f(x+\nu)
\|_{ L^{2}_{x} (Q) \ell^{p}_{\nu} (\Z^n) L^{p}_{y}(Q)}
\le
\| \overline {g (x-y)}
f(x+\nu)
\|_{ L^{2}_{x} (\R^n) \ell^{p}_{\nu} (\Z^n) L^{p}_{y}(Q)} .
\end{align*}
By recalling the definition of $V_{g}$ stated in \eqref{STFT},
the last quantity is identical with
\begin{align*}
\| \overline {g (x-y)}
f(x)
\|_{ L^{2}_{x} (\R^n) L^{p}_{y}(\R^n)} 
=
\| V_{g} f (y,\xi)
\|_{L^{2}_{\xi} (\R^n) L^{p}_{y} (\R^n)} .
\end{align*}
This completes the proof of \eqref{amalgam-Wiener}.
Here, note that the opposite inequality of \eqref{amalgam-Wiener} holds.
However, since the equivalence is unnecessary here, we omit the detail.
\end{proof}

The following lemma
was already given in \cite[Lemma 3.2]{MT-AIF} 
for the case $p=2$ and $R=1$.
We extend it to a bit more general form.
Moreover, we remark that
the inequality below implies the embedding 
$L^p \hookrightarrow W^{p, p'}$, $1 \le p \le 2$,
proved in \cite[Theorem 1.1]{CKS-JFA}.

\begin{lem} \label{MT-lemma24}
Let $2 \le p \le \infty$,
$R \ge 1$,
and $\varphi \in \calS(\R^n)$. 
Then 
\[
\Big\| \varphi \Big( \frac{D-\nu}{R} \Big) f(x)
\Big\|_{\ell^{p}_{\nu} (\Z^n) L^{p'}_{x} (\R^n)} 
\lesssim
R^{n/p} \|f\|_{L^{p'} (\R^n) }.
\]
\end{lem}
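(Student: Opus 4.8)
The plan is to reduce everything to the case $R=1$ by rescaling and then prove the $R=1$ statement by duality against $\ell^{p'}$ sequences, using essentially orthogonality of the Fourier supports. First I would note that by the dilation $f \mapsto f(R^{-1}\cdot)$ and the change of variables $\xi \mapsto R\xi$ in the Fourier side, the left-hand side with parameter $R$ transforms into the left-hand side with $R=1$ applied to a rescaled function, and tracking the Jacobians $R^{n}$ produces exactly the claimed factor $R^{n/p}$ against $\|f\|_{L^{p'}}$ (this is the same scaling bookkeeping as in the linear theory; one uses $p' \le 2 \le p$ so the homogeneities match up). So it suffices to prove
\[
\Big\| \varphi(D-\nu) f(x) \Big\|_{\ell^{p}_{\nu}(\Z^n) L^{p'}_x(\R^n)} \lesssim \|f\|_{L^{p'}(\R^n)}, \quad 2 \le p \le \infty .
\]

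For the $R=1$ estimate I would argue by interpolation between the endpoints $p=2$ and $p=\infty$. At $p=\infty$ the claim is simply $\sup_{\nu}\|\varphi(D-\nu)f\|_{L^1} \lesssim \|f\|_{L^1}$, which follows because $\varphi(D-\nu)$ is convolution with the fixed $L^1$ kernel $e^{i\nu\cdot x}\calF^{-1}\varphi(x)$ (modulation does not change the $L^1$ norm of the kernel), so Young's inequality applies. At $p=2$ the claim is $\big(\sum_{\nu}\|\varphi(D-\nu)f\|_{L^2}^2\big)^{1/2} \lesssim \|f\|_{L^2}$; by Plancherel this is $\big(\sum_{\nu}\int |\varphi(\xi-\nu)|^2|\widehat f(\xi)|^2\,d\xi\big)^{1/2} = \big(\int (\sum_{\nu}|\varphi(\xi-\nu)|^2)|\widehat f(\xi)|^2\,d\xi\big)^{1/2}$, and $\sum_{\nu\in\Z^n}|\varphi(\xi-\nu)|^2$ is a bounded $\Z^n$-periodic function since $\varphi$ is Schwartz. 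Interpolating the vector-valued operator $f \mapsto \{\varphi(D-\nu)f\}_{\nu}$ between $L^1 \to \ell^{\infty}(L^1)$ and $L^2 \to \ell^2(L^2)$ (complex interpolation of $\ell^q$-valued $L^p$ spaces, with the exponent pairs $(1,\infty)$ and $(2,2)$ landing on $(p',p)$) gives the desired bound for all $2 \le p \le \infty$.

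The main obstacle, or rather the only point requiring a little care, is the rescaling step: one must check that dilating by $R$ turns the clean $R=1$ inequality into precisely the stated inequality with the factor $R^{n/p}$, in particular that the $\ell^p_\nu$-sum over the lattice $\Z^n$ behaves correctly — after rescaling the natural lattice becomes $R\Z^n$, and passing back to $\Z^n$ (i.e. covering $R$-spaced frequency points by unit-spaced ones, harmless since $R \ge 1$ and $\varphi$ is rapidly decreasing) is what contributes the power of $R$. Everything else is the standard Schur/Plancherel/interpolation argument, and since a special case ($p=2$, $R=1$) is already in \cite{MT-AIF} the extension is routine; I would therefore write the rescaling reduction in full and then be brief about the interpolation endpoints.
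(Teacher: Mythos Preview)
Your interpolation strategy (endpoints $p=2$ via Plancherel and $p=\infty$ via Young, then complex interpolation of the vector-valued operator) is correct and genuinely different from the paper's proof. However, your rescaling reduction is described inaccurately: dilation by $R$ sends the lattice $\Z^n$ to $(1/R)\Z^n$, not $R\Z^n$, and for non-integer $R$ this is not a finite union of integer translates of $\Z^n$, so ``passing back to $\Z^n$'' is not as immediate as rapid decrease of $\varphi$ suggests (one can first reduce to integer $R$ by absorbing a bounded dilation into $\varphi$, then use a coset decomposition, but this is more than you indicate). The cleanest fix is to bypass the rescaling entirely and verify your two endpoints directly for general $R\ge1$: at $p=\infty$ the kernel $R^n(\calF^{-1}\varphi)(R\cdot)e^{i\nu\cdot x}$ has $L^1$-norm independent of $R$ and $\nu$, while at $p=2$ Plancherel gives $\sum_{\nu\in\Z^n}|\varphi((\xi-\nu)/R)|^2\lesssim R^n$ uniformly in $\xi$; interpolation then produces exactly the factor $R^{n/p}$.

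The paper takes a different, single-shot route. Writing the multiplier as convolution against $R^n\Phi(R\cdot)e^{i\nu\cdot}$ with $\Phi=\calF^{-1}\varphi$, it periodizes the convolution integral into an integral over $2\pi Q$ of a $2\pi\Z^n$-periodic function of $y$, recognizes the dependence on $\nu$ as the Fourier coefficients of that periodic function, and applies Hausdorff--Young on the torus to pass from $\ell^p_\nu$ to $L^{p'}_y(2\pi Q)$. The hypothesis $R\ge1$ enters only through the elementary bound $\sum_{\nu'\in\Z^n}|\Phi(R(y+2\pi\nu'))|\lesssim 1$. This handles all $p\in[2,\infty]$ and all $R\ge1$ in one computation, with no interpolation and no lattice comparison. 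Your approach is more modular and makes the role of the two endpoints transparent; the paper's is more direct and avoids the rescaling issue altogether.
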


\begin{proof}
With the notation $\Phi=\calF^{-1} \varphi$,
the expression \eqref{cubicdiscretization} with $\ell = 2\pi$
yields that
\begin{align*}
&
\varphi \Big( \frac{D-\nu}{R} \Big) f(x)
=
R^n \int_{\R^n} 
e^{ i y \cdot \nu } \,
\Phi (Ry) f(x - y) 
\, dy
\\&=
R^n
\int_{ 2\pi Q} e^{ i y \cdot \nu } 
\bigg\{ 
\sum_{\nu' \in \Z^n}
\Phi \big( R (y + 2\pi\nu' ) \big) f( x-y-2\pi\nu' ) 
\bigg\} 
\, dy .
\end{align*}
We realize that the function 
$\sum_{\nu' \in \Z^n} \Phi (R(y+2\pi\nu')) f( x-y-2\pi\nu' ) $
is $2\pi\Z^n$-periodic with respect to the $y$-variable.
Hence, we have by Hausdorff--Young's inequality
\begin{align*}
\Big\| \varphi \Big( \frac{D-\nu}{R} \Big) f(x)
\Big\|_{ \ell^{p}_{\nu} }^{p'}
\lesssim R^{np'} 
\int_{2 \pi Q}
\bigg| 
\sum_{\nu' \in \Z^n} 
\Phi \left( R ( y + 2\pi\nu' ) \right) 
f( x-y-2\pi\nu' ) 
\bigg|^{p'} \,dy .
\end{align*}
Since
$\sum_{\nu' \in \Z^n} | \Phi (R(y+2\pi\nu')) |
\lesssim 1$
for any $y\in\R^n$ and $R \geq 1$,
by applying H\"older's inequality to the sum over $\nu'$, 
the integral of the right hand side is bounded by
\begin{align*}
&
\int_{2 \pi Q} \sum_{\nu' \in \Z^n}
\left| \Phi \left( R ( y+2\pi\nu' ) \right) \right| 
\left| f( x-y-2\pi\nu' ) \right|^{p'} \,dy
= 
\big\| |\Phi (Ry)|
|f(x-y)|^{p'} 
\big\|_{ L^{1}_{y} } ,
\end{align*}
where we again used \eqref{cubicdiscretization} in the identity above.
Therefore, we obtain
\begin{align*}
\Big\| \varphi \Big( \frac{D-\nu}{R} \Big) f(x)
\Big\|_{ \ell^{p}_{\nu} }^{p'}
\lesssim R^{np'} 
\big\|
| \Phi ( R y ) | 
| f( x-y ) |^{p'} 
\big\|_{ L^{1}_{y} } .
\end{align*}
Integrating over $x$, we have
\begin{align*}
\Big\| \varphi \Big( \frac{D-\nu}{R} \Big) f(x)
\Big\|_{ \ell^{p}_{\nu} L^{p'}_{x} }^{p'}
&\lesssim R^{np'} 
\big\| |\Phi (Ry)|
|f(x-y)|^{p'} 
\big\|_{ L^{1}_{y} L^{1}_{x} }
\approx
R^{n(p'-1)} \|f\|_{L^{p'}}^{p'} ,
\end{align*}
which completes the proof.
\end{proof}

\section{Proof of Proposition \ref{main-prop}}
\label{secProof}

In this section, we will use the following notation:
$\vecxi=(\xi_1,\dots,\xi_N) \in (\R^n)^N$,
$\vecnu=(\nu_1,\dots,\nu_N) \in (\Z^n)^N$, and
$d\vecxi = d\xi_1 \dots d\xi_N$.
Also, we remark that, 
for any $p, p_j \in (0, \infty]$ satisfying 
$1/p \le 1/p_1 + \dots + 1/p_N$, 
we can choose
$\widetilde{p}_{j} \in (0, \infty]$, $j = 1, \dots, N$, 
such that
\begin{align} \label{pjs}
\begin{split}
\frac{1}{p} = \frac{1}{ \widetilde{p}_1 } + \dots + \frac{1}{ \widetilde{p}_N }
\quad \textrm{and}\quad
p_j \le \widetilde{p}_j .
\end{split}
\end{align}
In fact, for instance, 
if $p=\infty$, then we can take $\widetilde{p}_{j} = \infty$,
and if $p<\infty$, then we can take
\[
\frac{1}{ \widetilde{p}_j } = 
\frac{1}{p} \, \frac{1}{p_j} 
\Big( \frac{1}{p_1}+\dots+\frac{1}{p_N} \Big)^{-1} .
\]

Now, we shall give a proof of Proposition \ref{main-prop}.
First, we decompose $T_{\sigma}$ as follows.
By Lemma \ref{unifdecom},
the symbol $\sigma$ can be written as
\begin{align*}
\sigma ( x, \vecxi)
=
\sum_{\vecnu\in(\Z^n)^N} 
\sigma ( x, \vecxi ) 
\prod_{j=1}^N \kappa (\xi_{j} - \nu_{j}) \chi (\xi_{j} - \nu_{j} )
=
\sum_{\vecnu\in(\Z^n)^N} 
\sigma_{\vecnu} ( x, \vecxi ) 
\prod_{j=1}^N \kappa (\xi_{j} - \nu_{j})
\end{align*}
with
\begin{equation*}
\sigma_{\vecnu} ( x, \vecxi ) 
=
\sigma ( x, \vecxi ) 
\prod_{j=1}^N \chi (\xi_{j} - \nu_{j} ) .
\end{equation*}
Then, by denoting
the operators $\kappa (D - \nu_{j})$ 
by $\square_{\nu_{j}}$, $j=1,\dots,N$, 
we can write as
\begin{align} \label{decomposedT}
\begin{split}
T_{ \sigma }( f_1, \dots, f_N )(x) &=
\sum_{\vecnu\in(\Z^n)^N} 
T_{ \sigma_{\vecnu} }( \kappa(D-\nu_1)f_1, \dots, \kappa(D-\nu_N) f_N )(x) 
\\&=
\sum_{\vecnu\in(\Z^n)^N} 
T_{ \sigma_{\vecnu} }( \square_{\nu_1}f_1, \dots, \square_{\nu_N}f_N )(x) .
\end{split}
\end{align}
Here we remark that, 
for $0 < p, q \le \infty$ and $s \in \R$,
it holds that
$\| \langle \nu \rangle^{s} \square_{\nu} f \|_{\ell^{q} L^{p}} 
\lesssim \| f \|_{W^{p, q}_{s}}$,
since $\supp \kappa$ is compact.
Now, this 
$T_{ \sigma_{\vecnu } }
( \square_{\nu_1}f_1, \dots, \square_{\nu_N}f_N )$
satisfies the following inequality.

\begin{lem}
\label{main-lemma}
Let $N\ge 2$.
For $j = 1,\dots,N$,
let $m, m_{j} \in (-\infty, 0]$, 
$r_{j} \in (0,\infty)$, and
$R_{0}, R_{j} \in [1, \infty)$.
Suppose $\sigma$ is a bounded continuous function 
on $(\R^n)^{N+1}$ satisfying
$\supp \calF \sigma \subset 
B_{R_0} \times B_{R_1} \times \dots \times B_{R_N}$
and write
$W(\vecxi) = \langle \vecxi \rangle^{m}$
or
$\prod_{j=1}^{N} \langle \xi_j \rangle^{m_j}$.
Then, 
\begin{align*}&
\left| T_{ \sigma_{\vecnu} }
( \square_{\nu_1}f_1, \dots, \square_{\nu_N}f_N )(x) \right|
\\&\lesssim
W(\vecnu)
\| W(\vecxi)^{-1}
\sigma (x,\vecxi) \|_{ L^2_{ul,\vecxi} ((\R^{n})^{N}) L^{\infty}_{x} (\R^{n})}
\prod_{j=1}^{N}
\left\|
S_{r_j}( \square_{\nu_{j}} f_{j} ) (y+\tau_{j}) 
\right\|_{ \ell^{2}_{\tau_{j}} ( \Lambda_{R_{j}} ) } 
\end{align*}
holds for any  $x, y \in \R^n$ satisfying $|x-y|\lesssim1$,
where $\Lambda_{R_{j}} = \Z^n \cap [-2R_{j}-1, 2R_{j}+1]^n$.
\end{lem}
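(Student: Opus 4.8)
The plan is to write $T_{\sigma_{\vecnu}}(\square_{\nu_1}f_1,\dots,\square_{\nu_N}f_N)(x)$ as an integral over $(\R^n)^N$ of the product $\sigma_{\vecnu}(x,\vecxi)\prod_j \widehat{\square_{\nu_j}f_j}(\xi_j)$, and to exploit that, after the cutoffs $\chi(\xi_j-\nu_j)$ and $\kappa(\xi_j-\nu_j)$ have been inserted, the frequency variable $\xi_j$ effectively ranges over a unit neighborhood of $\nu_j$. First I would use the inverse partial Fourier transform $\calF_{1,\dots,N}^{-1}$ to rewrite the operator as a convolution-type expression in the physical variables: writing $g_j = \square_{\nu_j}f_j$ and $G_{\vecnu}(x,\vecy)$ for the partial inverse Fourier transform of $\sigma_{\vecnu}(x,\cdot)$ in the $\vecxi$-variables, one gets
\[
T_{\sigma_{\vecnu}}(g_1,\dots,g_N)(x)
= c\int_{(\R^n)^N} e^{ix\cdot(\nu_1+\dots+\nu_N)} G_{\vecnu}(x,\vecy)\prod_{j=1}^N \big(e^{-i\nu_j\cdot y_j}g_j(x-y_j)\big)\,d\vecy
\]
up to harmless modulation factors coming from recentering each $\chi(\xi_j-\nu_j)$ at the origin; the precise bookkeeping of these phases is routine and does not affect absolute values.

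Next I would estimate $|T_{\sigma_{\vecnu}}(g_1,\dots,g_N)(x)|$ by Cauchy--Schwarz in $\vecy$ against the weight $\prod_j\langle y_j\rangle^{(n+1)/2}$ used to define $S_{r_j}$. This produces a factor $\|\langle\vecy\rangle^{(n+1)/2}G_{\vecnu}(x,\vecy)\|_{L^2_{\vecy}}$ times $\prod_j\big(\int |g_j(x-y_j)|^{?}\langle y_j\rangle^{-(n+1)}\,dy_j\big)^{1/2}$; here one should really run the argument so that the exponent on $g_j$ becomes $r_j$, which is arranged by first passing from $|g_j(x-y_j)|$ to $|g_j(x-y_j)|^{r_j}$ using the band-limitedness of $g_j$ (its Fourier support lies in a ball of radius $O(1)$ around $\nu_j$, so a Nikol'skij/Bernstein-type bound lets one replace $|g_j|$ pointwise by a local average of $|g_j|^{r_j}$, exactly as in Lemma \ref{unidec->S}). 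That step converts the $y_j$-integral into $S_{r_j}(g_j)$ evaluated near $x$; since $|x-y|\lesssim 1$ we may center it at $y$ instead of $x$. The weighted $L^2$ norm of $G_{\vecnu}(x,\cdot)$ is, by Plancherel in $\vecxi$, comparable to a Sobolev-type norm of $\sigma_{\vecnu}(x,\cdot)$; using that $\widehat{\sigma_{\vecnu}}$ (in $\vecxi$) is supported in $\prod_j B_{R_j}$ after multiplication by $\chi(\cdot-\nu_j)$, the derivatives cost only powers of $R_j$, and the $\langle\vecxi\rangle^{m}$ (or $\prod_j\langle\xi_j\rangle^{m_j}$) weight is essentially constant $\approx W(\vecnu)$ on the support. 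This is what yields the factor $W(\vecnu)\,\|W(\vecxi)^{-1}\sigma\|_{L^2_{ul,\vecxi}L^\infty_x}$; one uses $\|\cdot\|_{L^2}\lesssim\|\cdot\|_{L^2_{ul}}$ on the small ($O(1)$-sized after the $\chi$-cutoff) $\vecxi$-region and then takes the sup in $x$.

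Finally I would convert the resulting $L^2_{\vecy}$-type bound on the $g_j$-side into the sum over the lattice $\Lambda_{R_j}=\Z^n\cap[-2R_j-1,2R_j+1]^n$: since $g_j=\square_{\nu_j}f_j$ has Fourier support in $B(0,R_j)+\nu_j$ (here $\nu_j$ may be large, but the relevant spatial decay/locality is dictated by $R_j$ together with the $\langle y_j\rangle^{-(n+1)}$ weight), the continuous integral $\int S_{r_j}(g_j)(y-y_j)^2\langle y_j\rangle^{-(n+1)}dy_j$ is dominated by $\sum_{\tau_j\in\Lambda_{R_j}}|S_{r_j}(g_j)(y+\tau_j)|^2$ up to the tail from $|\tau_j|\gtrsim R_j$, which is absorbed because $S_{r_j}(g_j)$ itself already contains the averaging kernel $\langle z\rangle^{-(n+1)}$ and hence varies slowly on unit scales. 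Assembling the three pieces gives exactly the claimed inequality. The main obstacle I anticipate is the second step: correctly organizing the Cauchy--Schwarz/Plancherel argument so that (i) the exponent on each $g_j$ comes out as the prescribed $r_j$ rather than $2$, which forces the Bernstein-type reduction to $|g_j|^{r_j}$ to be done \emph{before} applying Cauchy--Schwarz, and (ii) the frequency weight $W$ is cleanly pulled out as $W(\vecnu)$ with the remaining symbol norm being the uniformly-local $L^2$ norm rather than a full $L^2$ norm — this requires using the compact support of $\chi$ to localize $\vecxi$ to a unit cube around $\vecnu$ and then summing/taking suprema appropriately. The powers of $R_0$ do not appear here because this lemma isolates the $f_j$-side; the $R_0^{n/2}$ and $R_0^{n/p}$ factors of Proposition \ref{main-prop} will come later from the $x$-side when Lemma \ref{main-lemma} is combined with Lemma \ref{MT-lemma24} and Lemma \ref{Amalgam-Hardy}.
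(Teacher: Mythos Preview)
Your weighted Cauchy--Schwarz step introduces a loss that the lemma does not permit. Putting the weight $\prod_j\langle y_j\rangle^{(n+1)/2}$ on $G_{\vecnu}(x,\vecy)=(\calF_{1,\dots,N}\sigma_{\vecnu})(x,\vecy)$ turns $\|\,\cdot\,\|_{L^2_{\vecy}}$ into a Sobolev norm of $\sigma_{\vecnu}(x,\cdot)$ in $\vecxi$; controlling it by Bernstein on $\supp G_{\vecnu}(x,\cdot)\subset\prod_j B_{2R_j}$ costs $\prod_j R_j^{(n+1)/2}\|\sigma_{\vecnu}(x,\cdot)\|_{L^2_{\vecxi}}$. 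On the $g_j$-side your weighted integral is, up to the Bernstein reduction, just $S_{r_j}(g_j)$ at a single point, an $O(1)$ quantity in $R_j$. The target right-hand side, however, carries only the implicit factor $|\Lambda_{R_j}|^{1/2}\approx R_j^{n/2}$ inside $\|S_{r_j}(g_j)(y+\tau_j)\|_{\ell^2_{\tau_j}(\Lambda_{R_j})}$. Your final ``conversion'' step, bounding an integrable-weight continuous average by the unweighted sum over $\Lambda_{R_j}$, can at best buy back $R_j^{n/2}$, leaving an unrecoverable loss of $R_j^{1/2}$ per factor. Incidentally, $g_j=\square_{\nu_j}f_j$ has Fourier support in the \emph{unit} cube $\nu_j+[-1,1]^n$, not in a ball of radius $R_j$; the scales $R_j$ enter only through the Fourier support of $\sigma$ in the $\xi_j$-variables.

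The paper avoids any weight in the Cauchy--Schwarz step. Because $(\calF_{1,\dots,N}\sigma_{\vecnu})(x,\cdot)$ is compactly supported in $\prod_j B_{2R_j}$ (from $\supp\widehat\chi\subset B_1$ and the hypothesis on $\calF\sigma$), one inserts $\prod_j\ichi_{B_{2R_j}}(z_j)$, tiles each $B_{2R_j}$ by unit cubes $\tau_j+Q$ with $\tau_j\in\Lambda_{R_j}$, and on each cube invokes Lemma~\ref{unidec->S} to replace $|\square_{\nu_j}f_j(x+z_j)|$ \emph{pointwise} by $S_{r_j}(\square_{\nu_j}f_j)(y+\tau_j)$; this is where the exponent $r_j$ enters, and it happens before any inner product is split. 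An \emph{unweighted} Cauchy--Schwarz over $\vecz$ and over the sums in $\tau_j$ then yields directly $\prod_j\|S_{r_j}(\square_{\nu_j}f_j)(y+\tau_j)\|_{\ell^2_{\tau_j}(\Lambda_{R_j})}$ times $\|(\calF_{1,\dots,N}\sigma_{\vecnu})(x,\cdot)\|_{L^2}=\|\sigma_{\vecnu}(x,\cdot)\|_{L^2_{\vecxi}}$, with no $R_j$-powers on the symbol side. Finally, note that $\chi$ is a Schwartz function, not compactly supported, so the estimate $\|\sigma_{\vecnu}(x,\cdot)\|_{L^2_{\vecxi}}\lesssim W(\vecnu)\|W^{-1}\sigma\|_{L^2_{ul,\vecxi}L^\infty_x}$ is not a localization to a single unit cube as you describe; it requires decomposing $\R^{Nn}$ into unit cubes and using the rapid decay of $\chi$ together with $m,m_j\le 0$.
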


\begin{proof}
Since
the support of 
$(\calF_{1,\dots,N} \sigma_{\vecnu} ) (x, \cdot)$
is included in 
$B_{2R_1} \times \dots \times B_{2R_N}$ 
for any $x \in \R^n$,
\begin{align*} 
T&:=
T_{ \sigma_{\vecnu} }
( \square_{\nu_1}f_1, \dots, \square_{\nu_N}f_N )(x) 
= \frac{1}{(2\pi)^{Nn}}
\int_{(\R^{n})^{N}}
\big( \calF_{1,\dots,N} \sigma_{\vecnu} \big) (x, \vecz)
\, \prod_{j=1}^N \square_{\nu_{j}} f_{j} (x+z_{j}) 
\, d\vecz
\\&= \frac{1}{(2\pi)^{Nn}}
\int_{(\R^{n})^{N}}
\big( \calF_{1,\dots,N} \sigma_{\vecnu} \big) (x, \vecz)
\prod_{j=1}^N \ichi_{B_{2R_{j}}}(z_{j}) \;
\square_{\nu_{j}} f_{j} (x+z_{j}) 
\, d\vecz .
\end{align*}
Since 
the ball $B_{2R_j}$ is covered by a disjoint union of the unit cubes
$\tau + Q$, $\tau \in \Lambda_{R_j}$, 
the characteristic function $\ichi_{B_{2R_j}}$ is bounded by
the sum of $\ichi_{Q}(\cdot-\tau)$ over $\tau \in \Lambda_{R_j}$.
This yields
\begin{align} \label{Tsigma1}
\begin{split}&
|T|
\le
\int_{(\R^{n})^{N}}
\left| ( \calF_{1,\dots,N} \sigma_{\vecnu} ) (x, \vecz) \right|
\prod_{j=1}^N \ichi_{B_{2R_{j}}}(z_{j})
\left| \square_{\nu_{j}} f_{j} (x+z_{j}) \right| 
\, d\vecz
\\&\le
\sum_{ \tau_1 \in \Lambda_{R_1} } \dots \sum_{ \tau_N \in \Lambda_{R_N} }
\int_{(\R^{n})^{N}}
\left| ( \calF_{1,\dots,N} \sigma_{\vecnu} ) (x, \vecz) \right|\prod_{j=1}^N \ichi_{Q}(z_{j}-\tau_{j})
\left| \square_{\nu_{j}} f_{j} (x+z_{j}) \right|
\, d\vecz .
\end{split}
\end{align}
Note that $|(x+z_j)-(y+\tau_j)| \lesssim 1$ if $|x-y| \lesssim 1$ and $z_j - \tau_j \in Q$.
Then, by Lemma \ref{unidec->S} and the Cauchy--Schwarz inequality, 
the integral above is estimated by
\begin{align} \label{Tsigma2}
\begin{split}
&
\prod_{j=1}^N 
S_{r_j} ( \square_{\nu_{j}} f_{j} ) (y+\tau_j)
\int_{(\R^{n})^{N}}
\left| ( \calF_{1,\dots,N} \sigma_{\vecnu} ) (x, \vecz) \right|
\prod_{j=1}^N \ichi_{Q}(z_{j}-\tau_{j})
\, d\vecz
\\&\le
\prod_{j=1}^N 
S_{r_j} ( \square_{\nu_{j}} f_{j} ) (y+\tau_j)
\Big\| ( \calF_{1,\dots,N} \sigma_{\vecnu} ) (x, \vecz) 
\prod_{j=1}^N \ichi_{Q}(z_{j}-\tau_{j})
\Big\|_{L^{2}_{\vecz}} .
\end{split}
\end{align}
Combining with \eqref{Tsigma1} and \eqref{Tsigma2}
and then using the Cauchy--Schwarz inequalities to 
the sums with respect to the $\tau_j$'s, $j=1,\dots,N$,
we have
\begin{align} \label{Tsigma3}
\begin{split}
|T|
&\leq
\prod_{j=1}^N 
\left\| S_{r_j} ( \square_{\nu_{j}} f_{j} ) (y+\tau_j) \right\|_{ \ell^{2}_{\tau_j} (\Lambda_{R_{j}}) }
\Big\| ( \calF_{1,\dots,N} \sigma_{\vecnu} ) (x, \vecz) 
\prod_{j=1}^N \ichi_{Q}(z_{j}-\tau_{j})
\Big\|_{ L^2_{\vecz} \ell^{2}_{\vectau} } 
\\&=
\prod_{j=1}^N 
\left\| S_{r_j} ( \square_{\nu_{j}} f_{j} ) (y+\tau_j) 
\right\|_{ \ell^{2}_{\tau_j} (\Lambda_{R_{j}}) }
\| \sigma_{\vecnu} (x, \vecxi) 
\|_{ L^2_{\vecxi} } ,
\end{split}
\end{align}
where, in the identity, 
we applied
\eqref{cubicdiscretization}
to the $L^2_{\vecz} \ell^{2}_{\vectau}$ norm
and then used Plancherel's theorem.
In what follows, we shall prove that
\begin{equation} \label{Tsigma4}
\sup_{x \in \R^n} 
\| \sigma_{\vecnu} (x, \vecxi) 
\|_{ L^2_{\vecxi} ((\R^n)^{N}) }
\lesssim
W(\vecnu)
\| W(\vecxi)^{-1} \sigma (x, \vecxi) 
\|_{ { L^2_{ul, \vecxi} ((\R^n)^{N}) } L^{\infty}_{x} (\R^n) }
\end{equation}
for $\vecnu \in (\Z^n)^N$.
Then, \eqref{Tsigma3} and \eqref{Tsigma4}
give the estimate of this lemma.
By \eqref{cubicdiscretization},
we have
\begin{align*}&
\| \sigma_{\vecnu} (\cdot, \vecxi) 
\|_{ L^2_{\vecxi} } 
= W(\vecnu)
\Big\| W(\vecnu)^{-1} \sigma ( \cdot, \vecxi+\vecmu ) 
\prod_{j=1}^N \chi (\xi_{j} +\mu_{j}- \nu_{j} ) 
\Big\|_{ L^2_{\vecxi} (Q^{N}) \ell^2_{\vecmu} ((\Z^n)^{N}) }
\\&\lesssim
W(\vecnu)
\Big\| W(\vecxi+\vecmu)^{-1} W(\vecxi+\vecmu-\vecnu)^{-1} \;
\sigma ( \cdot, \vecxi+\vecmu ) 
\prod_{j=1}^N \chi (\xi_{j} +\mu_{j}- \nu_{j} ) 
\Big\|_{ L^2_{\vecxi} (Q^{N}) \ell^2_{\vecmu} ((\Z^n)^{N}) } .
\end{align*}
Here the inequality above holds true since $m, m_{j} \in (-\infty,0]$ are assumed.
Since $\chi \in \calS(\R^n)$,
for some sufficiently large number $L>0$,
the $L^2\ell^2$-norm above is bounded by
\begin{align*} 
\Big\| 
W(\vecxi+\vecmu)^{-1}\sigma ( \cdot, \vecxi+\vecmu) \;
W(\vecmu-\vecnu)^{-1} 
\prod_{j=1}^N \langle \mu_{j} - \nu_{j} \rangle^{-L} 
\Big\|_{ L^2_{\vecxi} (Q^{N}) \ell^2_{\vecmu} } 
\lesssim 
\left\| W(\vecxi)^{-1}
\sigma (\cdot, \vecxi) \right\|_{ L^2_{ul, \vecxi} } . 
\end{align*}
Therefore, we obtain \eqref{Tsigma4},
and also complete the proof.\end{proof}


Now, we shall proceed to the estimates of 
the operators considered in Proposition \ref{main-prop}. 
In order to simplify the notations 
appearing in Lemma \ref{main-lemma},
let us denote
\begin{align}
& \label{sigmam}
| \sigma |_{m} =
\| \langle \vecxi \rangle^{-m} \sigma (x,\vecxi) 
\|_{ L^2_{ul,\vecxi} ((\R^{n})^{N}) L^{\infty}_{x} (\R^{n}) } ,
\\ 
& \label{sigmavecm}
| \sigma |_{ \vecm } =
\Big\| 
\prod_{j=1}^{N}
\langle \xi_j \rangle^{-m_j} \sigma (x,\vecxi) 
\Big\|_{ L^2_{ul,\vecxi} ((\R^{n})^{N}) L^{\infty}_{x} (\R^{n}) } ,
\end{align}
and further for $0 < p_j \le \infty$ and $0 < r_j < \infty$
\begin{align}
\label{tildeFnuj}
{F}_{\nu_{j}}^{ p_j, r_j } (x) =
S_{r_j} \big( \langle \nu_j \rangle^{ \alpha(p_j) } \square_{\nu_{j}} f_{j} \big) (x),
\quad \alpha(p_j) = n/2 - \max\{ n/2, n/p_j \} .
\end{align}

\subsection{Proof of Proposition \ref{main-prop} (1)-(a)}

Take a real valued function $g \in \calS(\R^n)$
satisfying 
$|g| \geq c >0$ on $[-1,1]^n$ and
$\supp \widehat g \subset B_1$.
We have by Lemma \ref{Amalgam-Hardy}
and duality
\begin{align} \label{hajime}
\begin{split}&
\| T_{ \sigma }
( f_1, \dots, f_N ) \|_{h^p}
\lesssim
\| V_{g} [ T_{ \sigma } ( f_1, \dots, f_N ) ] (x,\zeta)
\|_{L^{2}_{\zeta} (\R^n) L^{p}_{x} (\R^n)}
\\&=
\bigg\| \sup_{ h \in L^2 (\R^n) }
\Big| \int_{\R^n}
V_{g} [ T_{ \sigma } ( f_1, \dots, f_N ) ] (x,\zeta) \,
h(\zeta) \, d\zeta \Big|
\bigg\|_{L^{p}_{x} (\R^n)} .
\end{split}
\end{align}
Hence, in what follows, we consider
\begin{equation} \label{Idef}
I := \int_{\R^n}
V_{g} [ T_{ \sigma } ( f_1, \dots, f_N )] (x,\zeta) \,
h(\zeta) \, d\zeta
\end{equation}
for $x\in\R^n$ and $h\in L^2(\R^n)$, 
which is decomposed by \eqref{decomposedT} as
\begin{align*} 
I=
\sum_{\vecnu\in(\Z^n)^N} 
\int_{\R^n} 
V_{g} [ T_{ \sigma_{\vecnu} } ( \square_{\nu_1}f_1, \dots, \square_{\nu_N}f_N ) ] (x,\zeta) \,
h(\zeta) \, d\zeta .
\end{align*}
Here, we shall observe that
\begin{equation} \label{suppSTFTTsigma}
\supp 
V_{g} [ T_{ \sigma_{\vecnu} } ( \square_{\nu_1}f_1, \dots, \square_{\nu_N}f_N ) ] (x, \cdot)
\subset 
\big\{ \zeta \in \R^n : 
| \zeta-(\nu_1 +\dots+ \nu_N) | \lesssim R_0 \big\}.
\end{equation}
In fact, since 
$\supp \calF_{0} \sigma_{\vecnu} 
(\cdot, \vecxi ) 
\subset B_{R_0}$
and 
$\supp \kappa(\cdot-\nu_{j}) 
\subset \nu_{j}+[-1,1]^n$,
the identity
\begin{align*}
&
\calF [ T_{ \sigma_{\vecnu} }( \square_{\nu_1}f_1, \dots, \square_{\nu_N}f_N ) ] (\zeta)
\\&= \frac{1}{(2\pi)^{Nn}}
\int_{ (\R^{n})^N }
\big( \calF_0 \sigma_{\vecnu} \big) \big( \zeta - (\xi_1 +\dots+ \xi_N) , \vecxi \big) 
\, \prod_{j=1}^N \kappa (\xi_{j}-\nu_{j}) \widehat{f_{j}}(\xi_{j})
\, d\vecxi 
\end{align*}
implies that
\begin{equation} \label{suppFT}
\supp 
\calF [ T_{ \sigma_{\vecnu} }( \square_{\nu_1}f_1, \dots, \square_{\nu_N}f_N ) ]
\subset 
\big\{ \zeta \in \R^n : | \zeta-(\nu_1 +\dots+ \nu_N) | \lesssim R_0 \big\}.
\end{equation}
Hence, regarding the short-time Fourier transform 
given in \eqref{STFT} as
\[
V_{g} [ T_{ \sigma_{\vecnu} } ( \square_{\nu_1}f_1, \dots, \square_{\nu_N}f_N ) ] (x,\zeta)
=
\calF [ g (\cdot-x) T_{ \sigma_{\vecnu} }( \square_{\nu_1}f_1, \dots, \square_{\nu_N}f_N ) ] (\zeta) ,
\]
we see that \eqref{suppSTFTTsigma} holds.
Now, we take a function 
$\varphi \in \calS(\R^n)$ satisfying 
$\varphi = 1$ on 
$\{ \zeta\in\R^n: |\zeta|\lesssim 1\}$.
Then, the expression $I$ considered in \eqref{Idef} 
can be written as
\begin{align*}&
I
=
\sum_{\vecnu\in(\Z^n)^N} 
\int_{\R^n} 
V_{g} [ T_{ \sigma_{\vecnu} } ( \square_{\nu_1}f_1, \dots, \square_{\nu_N}f_N ) ] (x,\zeta) \,
\varphi \Big( \frac{\zeta - (\nu_1 +\dots+ \nu_N) }{R_0} \Big) 
h(\zeta) 
\, d\zeta 
\\&=
\sum_{\vecnu\in(\Z^n)^N} 
\int_{\R^n} 
g(t) \;
T_{ \sigma_{\vecnu} } ( \square_{\nu_1}f_1, \dots, \square_{\nu_N}f_N ) (x+t) \,
\calF \Big[ \varphi \Big( \frac{\cdot - (\nu_1 +\dots+ \nu_N) }{R_0} \Big) h \Big] (x+t) 
\, dt .
\end{align*}
By \eqref{cubicdiscretization},
we can further rewrite the above as
\begin{align} \label{Iest}
\begin{split}
I =
\sum_{\mu \in \Z^n}
\sum_{\vecnu\in(\Z^n)^N} 
\int_{Q} 
&
g(\mu+t) 
\;
T_{ \sigma_{\vecnu} }
( \square_{\nu_1}f_1, \dots, \square_{\nu_N}f_N ) (x+\mu+t)
\\& \times
\calF \Big[
\varphi \Big( \frac{\cdot - (\nu_1 +\dots+ \nu_N) }{R_0} \Big) 
h \Big] (x+\mu+t)
\, dt .
\end{split}
\end{align}

Now, we shall actually estimate the expression in \eqref{Iest}.
In this subsection, we will use Lemma \ref{main-lemma} with
$W(\vecxi) = \prod_{j=1}^{N} \langle \xi_j \rangle^{m_j}$.
By the fact that, for sufficiently large $L > 0$, 
$|g(\mu+t)| \lesssim \langle \mu \rangle^{-L}$ holds for $t \in Q$,
Lemma \ref{main-lemma} with the notation \eqref{sigmavecm} 
gives that
\begin{align*}
|I| &\lesssim
| \sigma |_{ \vecm }
\sum_{\mu \in \Z^n}
\langle \mu \rangle^{-L} 
\sum_{\vecnu\in(\Z^n)^N} 
\prod_{j=1}^{N}
\langle \nu_j \rangle^{m_j}
\left\|
S_{r_j} ( \square_{\nu_{j}} f_{j} ) (x+\mu+\tau_{j}) 
\right\|_{ \ell^{2}_{\tau_{j}} ( \Lambda_{R_{j}} ) } 
\\&\qquad \times
\int_{Q} 
\Big| \calF \Big[
\varphi \Big( \frac{\cdot - (\nu_1 +\dots+ \nu_N) }{R_0} \Big) 
h \Big] (x+\mu+t) \Big|
\, dt .
\end{align*}
In the right, we use the embedding
$L^2 (\R^n) \hookrightarrow L^1(Q)$
to the integral over $t \in Q$
and also,
by adding
$\langle \nu_j \rangle^{ -\alpha(p_j) } \langle \nu_j \rangle^{ \alpha(p_j) }$,
replace the factor $S_{r_j} ( \square_{\nu_{j}} f_{j} )$
with ${F}_{\nu_{j}}^{p_j,r_j}$ (see \eqref{tildeFnuj}).
Then 
\begin{align*} &
|I| 
\lesssim
| \sigma |_{ \vecm }
\sum_{\mu \in \Z^n}
\langle \mu \rangle^{-L} 
\\& \times
\sum_{\vecnu\in(\Z^n)^N} 
\prod_{j=1}^{N}
\langle \nu_j \rangle^{m_j - \alpha(p_j)}
\big\| {F}_{\nu_{j}}^{p_j,r_j} (x+\mu+\tau_{j}) 
\big\|_{ \ell^{2}_{\tau_{j}} ( \Lambda_{R_{j}} ) }
\Big\|
\varphi \Big( \frac{\cdot - (\nu_1 +\dots+ \nu_N) }{R_0} \Big) 
h \Big\|_{ L^2 (\R^n) } .
\end{align*}
Since \eqref{criticalvecm1} and \eqref{criticalvecm2} imply respectively that
$-n/2 < m_j - \alpha(p_j) < 0$
and $\sum_{j=1}^{N} (m_j - \alpha(p_j)) = n/2 - Nn/2$,
we have
by Lemma \ref{productLweakp'} with $r=2$
\begin{align*}&
|I| \lesssim
| \sigma |_{ \vecm }
\sum_{\mu \in \Z^n}
\langle \mu \rangle^{-L} 
\Big\|
\varphi \Big( \frac{\cdot - \nu }{R_0} \Big) 
h \Big\|_{ L^2 \ell^2_{\nu} } 
\prod_{j=1}^{N}
\big\| {F}_{\nu_{j}}^{p_j,r_j} (x+\mu+\tau_{j}) 
\big\|_{ \ell^{2}_{\tau_{j}} ( \Lambda_{R_{j}} ) \ell^2_{\nu_j} } ,
\end{align*}
and further have
by using that 
$\| \varphi ( ({x-\nu})/{R_0}) \|_{\ell^2_{\nu}} \lesssim R_{0}^{n/2}$
for any $x \in \R^n$
\begin{align} \label{Iketsuron}
|I| \lesssim
R_0^{n/2}
\| h \|_{L^2}
| \sigma |_{ \vecm }
\sum_{\mu \in \Z^n}
\langle \mu \rangle^{-L} 
\prod_{j=1}^{N}
\big\| {F}_{\nu_{j}}^{p_j,r_j} (x+\mu+\tau_{j}) 
\big\|_{ \ell^{2}_{\tau_{j}} ( \Lambda_{R_{j}} ) \ell^2_{\nu_j} } .
\end{align}

Collecting 
\eqref{hajime},
\eqref{Idef}, and
\eqref{Iketsuron},
we obtain
\begin{align*}
\| T_{ \sigma }
( f_1, \dots, f_N ) \|_{h^p}
\lesssim 
R_0^{n/2}
| \sigma |_{ \vecm }
\Big\| 
\sum_{\mu \in \Z^n}
\langle \mu \rangle^{-L}
\prod_{j=1}^{N}
\big\| {F}_{\nu_{j}}^{p_j,r_j} (x+\mu+\tau_{j}) 
\big\|_{ \ell^{2}_{\tau_{j}} ( \Lambda_{R_{j}} ) \ell^2_{\nu_j} }
\Big\|_{L^{p}_{x}}
.
\end{align*}
Apply the embedding
$\ell^{\min\{1,p\}} \hookrightarrow \ell^{1}$
to the sum over $\mu$
and choose $L > n/\min\{1,p\}$.
Then by Minkowski's inequality
the $L^{p}_{x}$ quasi-norm above
is bounded by
\begin{align*}
&
\bigg\| \langle \mu \rangle^{-L} \Big\| 
\prod_{j=1}^{N}
\big\| {F}_{\nu_{j}}^{p_j,r_j} (x+\mu+\tau_{j}) 
\big\|_{ \ell^{2}_{\tau_{j}} ( \Lambda_{R_{j}} ) \ell^2_{\nu_j} }
\Big\|_{L^{p}_{x} }
\bigg\|_{\ell^{\min\{1,p\}}_{\mu} }
\\&
\approx
\Big\| 
\prod_{j=1}^{N}
\big\| {F}_{\nu_{j}}^{p_j,r_j} (x+\tau_{j}) 
\big\|_{ \ell^{2}_{\tau_{j}} ( \Lambda_{R_{j}} ) \ell^2_{\nu_j} }
\Big\|_{L^{p}_{x}}
.
\end{align*}
We take 
$\widetilde{p}_{j} \in (0,\infty]$, $j=1,\dots,N$,
satisfying \eqref{pjs}
and use H\"older's inequality
to have
\begin{align} \label{0-2shiage}
\begin{split}
\| T_{ \sigma }
( f_1, \dots, f_N ) \|_{h^p}
\lesssim 
R_0^{n/2}
| \sigma |_{ \vecm }
\prod_{j=1}^{N}
\big\| {F}_{\nu_{j}}^{p_j,r_j} (x+\tau_{j}) 
\big\|_{ \ell^{2}_{\tau_{j}} ( \Lambda_{R_{j}} ) \ell^2_{\nu_j} L^{ \widetilde{p}_{j} }_{x} }
.
\end{split}
\end{align}
Using that
$\ell^{\min \{ 2, p_j \}}_{\tau_{j}} \hookrightarrow \ell^{2}_{\tau_{j}}$,
since $\min\{2,p_j\} \le \widetilde{p}_j$,
we have by Minkowski's inequality
\begin{align*}
\| {F}_{\nu_{j}}^{p_j,r_j} (x+\tau_{j}) 
\|_{ \ell^{2}_{\tau_{j}} ( \Lambda_{R_{j}} ) \ell^2_{\nu_j} L^{ \widetilde{p}_{j} }_{x} } 
&\le
\| {F}_{\nu_{j}}^{p_j,r_j} (x) 
\|_{ \ell^2_{\nu_j} L^{ \widetilde{p}_{j} }_{x}  \ell^{\min \{ 2, p_j \}}_{\tau_{j}} ( \Lambda_{R_{j}} )} 
\\&
\approx
R_{j}^{\max\{n/2, n/p_j\}}
\| {F}_{\nu_{j}}^{p_j,r_j} 
\|_{ \ell^2_{\nu_j} L^{ \widetilde{p}_{j} } } .
\end{align*}
We now recall that $r_j \in (0,\infty)$ can be chosen arbitrarily
and we take $r_j < \min\{ 1, \widetilde{p}_{j} \}$.
Then
\begin{align*}
\| {F}_{\nu_{j}}^{p_j,r_j} 
\|_{ \ell^2_{\nu_j} L^{ \widetilde{p}_{j} } }
&=
\| S_{r_j} ( \langle \nu_j \rangle^{ \alpha(p_j) } \square_{\nu_{j}} f_{j} )
\|_{ \ell^2_{\nu_j} L^{ \widetilde{p}_{j} } }
\\& 
\lesssim
\| \langle \nu_j \rangle^{ \alpha(p_j) } \square_{\nu_{j}} f_{j}(x)
\|_{ \ell^2_{\nu_j} L^{ \widetilde{p}_{j} }_{x} } 
\lesssim
\| f_{j} \|_{ W^{ \widetilde{p}_{j}, 2 }_{\alpha(p_j)} } .
\end{align*}
To the last quantity, we use \eqref{emb-WW} with $p_j \le \widetilde{p}_j$,
and then use \eqref{emb-hW}-\eqref{emb-bmoW}
to obtain
\[
\| f_{j} \|_{ W^{ \widetilde{p}_{j}, 2 }_{\alpha(p_j)} }
\lesssim
\| f_{j} \|_{ W^{ {p}_{j}, 2 }_{\alpha(p_j)} }
\lesssim
\| f_{j} \|_{ h^{{p}_{j}} } ,
\]
where $h^{p_j}$ can be replaced by $bmo$ when $p_j = \infty$.
Hence,
\begin{align} \label{fjSJ}
\| {F}_{\nu_{j}}^{p_j,r_j} (x+\tau_{j}) 
\|_{ \ell^{2}_{\tau_{j}} ( \Lambda_{R_{j}} ) \ell^2_{\nu_j} L^{ \widetilde{p}_{j} }_{x} } 
\lesssim
R_{j}^{\max\{n/2, n/p_j\}}
\| f_{j} \|_{ h^{{p}_{j}} } .
\end{align}

Lastly, 
substituting \eqref{fjSJ}
into \eqref{0-2shiage},
we obtain
\begin{align*}
\| T_{\sigma} ( f_1, \dots, f_N ) \|_{h^p}
\lesssim 
| \sigma |_{ \vecm }
R_0^{n/2}
\prod_{j=1}^{N}
R_{j}^{\max\{n/2, n/p_j\}}
\| f_{j} \|_{ h^{ {p}_{j} } } ,
\end{align*}
which completes the proof of Proposition \ref{main-prop} (1)-(a).

\subsection{Proof of Proposition \ref{main-prop} (1)-(b)}

We take a function $\varphi \in \calS(\R^n)$
satisfying $\varphi = 1$ on 
$\{ \zeta\in\R^n: |\zeta|\lesssim 1\}$.
By \eqref{decomposedT} and \eqref{suppFT},
we have 
\begin{align} \label{1-bsaisho}
\begin{split}
&
\| T_{ \sigma } ( f_1, \dots, f_N ) \|_{L^p}
=
\sup_{ h \in L^{p'} }
\bigg| \sum_{\vecnu\in(\Z^n)^N} \int_{\R^n}
T_{ \sigma_{\vecnu} }( \square_{\nu_1}f_1, \dots, \square_{\nu_N}f_N )(x) 
h(x) \, dx \bigg|
\\&=
\sup_{ h \in L^{p'} }
\bigg| \sum_{\vecnu\in(\Z^n)^N} \int_{\R^n}
T_{ \sigma_{\vecnu} }( \square_{\nu_1}f_1, \dots, \square_{\nu_N}f_N )(x) 
\;
\varphi \Big( \frac{D+ \nu_1 +\dots+ \nu_N }{R_0} \Big) 
h(x) \, dx \bigg| .
\end{split}
\end{align}
In what follows, we consider
\[
\II := 
\sum_{\vecnu\in(\Z^n)^N} \int_{\R^n}
T_{ \sigma_{\vecnu} }( \square_{\nu_1}f_1, \dots, \square_{\nu_N}f_N )(x) 
\;
\varphi \Big( \frac{D+ \nu_1 +\dots+ \nu_N }{R_0} \Big) 
h(x) \, dx .
\]
By Lemma \ref{main-lemma} with 
$W(\vecxi) = \prod_{j=1}^{N} \langle \xi_j \rangle^{m_j}$ and 
the notations 
\eqref{sigmavecm} and \eqref{tildeFnuj},
we have
\begin{align*}
&
|\II| \lesssim
|\sigma|_{ \vecm }
\sum_{\vecnu\in(\Z^n)^N} 
\int_{\R^n}
\prod_{j=1}^{N}
\langle \nu_j \rangle^{m_j}
\| S_{r_j} ( \square_{\nu_{j}} f_{j} ) (x+\tau_{j}) 
\|_{ \ell^{2}_{\tau_{j}} ( \Lambda_{R_{j}} ) } 
\;
\Big| \varphi \Big( \frac{D+ \nu_1 +\dots+ \nu_N }{R_0} \Big) h(x) \Big|
\, dx 
\\&=
|\sigma|_{ \vecm }
\int_{\R^n}
\sum_{\vecnu\in(\Z^n)^N} 
\prod_{j=1}^{N}
\langle \nu_j \rangle^{m_j-\alpha(p_j)}
\| {F}_{\nu_{j}}^{p_j,r_j} (x+\tau_{j}) 
\|_{ \ell^{2}_{\tau_{j}} ( \Lambda_{R_{j}} ) }
\Big| \varphi \Big( \frac{D+ \nu_1 +\dots+ \nu_N }{R_0} \Big) h(x) \Big|
\, dx .
\end{align*}
Note here that 
\eqref{criticalvecm1} and \eqref{criticalvecm2} 
imply respectively that
$-n/2 < m_j - \alpha(p_j) < 0$
and $\sum_{j=1}^{N} (m_j - \alpha(p_j)) = n/p - Nn/2$.
Then, using Lemma \ref{productLweakp'} with $r=p$
and H\"older's inequality with $\widetilde{p}_j \in (0,\infty]$ satisfying \eqref{pjs},
the integral above is estimated by
\begin{align*}&
\int_{\R^n}
\Big\| \varphi \Big( \frac{D+ \nu }{R_0} \Big) h(x) \Big\|_{ \ell^{p}_{\nu} }
\prod_{j=1}^{N}
\| {F}_{\nu_{j}}^{p_j,r_j} (x+\tau_{j}) 
\|_{ \ell^{2}_{\tau_{j}} ( \Lambda_{R_{j}} ) \ell^2_{\nu_j} }
\, dx 
\\&\le
\Big\| \varphi \Big( \frac{D+ \nu }{R_0} \Big) h \Big\|_{ \ell^{p}_{\nu} L^{p'} }
\prod_{j=1}^{N}
\| {F}_{\nu_{j}}^{p_j,r_j} (x+\tau_{j}) 
\|_{ \ell^{2}_{\tau_{j}} ( \Lambda_{R_{j}} ) \ell^2_{\nu_j} L^{ \widetilde{p}_{j} }_{x} }
 ,
\end{align*}
which implies, 
from Lemma \ref{MT-lemma24}, 
that
\begin{align*} 
|\II| \lesssim
R_0^{n/p} \|h\|_{ L^{p'} } 
|\sigma|_{ \vecm }
\prod_{j=1}^{N}
\| {F}_{\nu_{j}}^{p_j,r_j} (x+\tau_{j}) 
\|_{ \ell^{2}_{\tau_{j}} ( \Lambda_{R_{j}} ) \ell^2_{\nu_j} L^{ \widetilde{p}_{j} }_{x} } .
\end{align*}
We repeat the argument as in \eqref{fjSJ},
where we avoid the use of the embedding
$\ell^{\min \{ 2, p_j \}}_{\tau_{j}} \hookrightarrow \ell^{2}_{\tau_{j}}$
since $2 \le \widetilde{p}_j \leq \infty$ for $2 \le p < \infty$.
Then, 
\begin{align*} 
\| {F}_{\nu_{j}}^{p_j,r_j} (x+\tau_{j}) 
\|_{ \ell^{2}_{\tau_{j}} ( \Lambda_{R_{j}} ) \ell^2_{\nu_j} L^{ \widetilde{p}_{j} }_{x} } 
\lesssim
R_{j}^{n/2}
\| f_{j} \|_{ W^{ \widetilde{p}_{j}, 2 }_{ \alpha(p_j) } } 
\lesssim
R_{j}^{n/2}
\| f_{j} \|_{ W^{ {p}_{j}, 2 }_{ \alpha(p_j) } }
\lesssim
R_{j}^{n/2}
\| f_{j} \|_{ h^{ {p}_{j} } } .
\end{align*}
Gathering the above inequalities, we obtain
\begin{align*}
|\II| \lesssim 
| \sigma |_{ \vecm }
R_0^{n/p}
\prod_{j=1}^{N}
R_{j}^{n/2}
\| f_{j} \|_{ h^{ {p}_{j} } }
\|h\|_{ L^{p'} } ,
\end{align*}
where $h^{p_j}$ can be replaced by $bmo$ when $p_j = \infty$.
This completes the proof of Proposition \ref{main-prop} (1)-(b)
by combining with \eqref{1-bsaisho}.

\subsection{Proof of Proposition \ref{main-prop} (2)}

By using \eqref{decomposedT} and Lemma \ref{main-lemma}
with $W(\vecxi) = \langle \vecxi \rangle^{m}$,
\begin{align*}&
| T_{ \sigma } ( f_1, \dots, f_N ) (x) |
\le
\sum_{\vecnu\in(\Z^n)^N} 
\left|
T_{ \sigma_{\vecnu} }( \square_{\nu_1}f_1, \dots, \square_{\nu_N}f_N )(x) 
\right|
\\&\lesssim
|\sigma|_{m}
\sum_{\vecnu\in(\Z^n)^N} 
\big(1+|\nu_1|+\dots+|\nu_N| \big)^m
\prod_{j=1}^{N}
\left\|
S_{r_j} ( \square_{\nu_{j}} f_{j} ) (x+\tau_{j}) 
\right\|_{ \ell^{2}_{\tau_{j}} ( \Lambda_{R_{j}} ) } ,
\end{align*}
where $|\sigma|_{m}$ is as in \eqref{sigmam}.
Since $m=-Nn/2+\sum_{j=1}^{N} \alpha(p_j)$,
where $\alpha(p_j)$ is as in \eqref{tildeFnuj},
the sum over $\vecnu$ is bounded by
\begin{align*}
\sum_{\vecnu\in(\Z^n)^N} 
\big(1+|\nu_1|+\dots+|\nu_N| \big)^{-Nn/2}
\prod_{j=1}^{N}
\| {F}_{\nu_{j}}^{p_j,r_j} (x+\tau_{j}) 
\|_{ \ell^{2}_{\tau_{j}} ( \Lambda_{R_{j}} ) } .
\end{align*}
By Lemma \ref{productLweak1} and the argument as in \eqref{fjSJ},
the sum above is further estimated by
\begin{align*}
\prod_{j=1}^{N}
\| {F}_{\nu_{j}}^{p_j,r_j} (x+\tau_{j}) 
\|_{ \ell^{2}_{\tau_{j}} ( \Lambda_{R_{j}} ) \ell^2_{\nu_j} } 
&\le
\prod_{j=1}^{N}
\| {F}_{\nu_{j}}^{p_j,r_j} (x+\tau_{j}) 
\|_{ \ell^{2}_{\tau_{j}} ( \Lambda_{R_{j}} ) \ell^2_{\nu_j} L^{ \infty }_{x} } 
\lesssim
\prod_{j=1}^{N} R_j^{n/2}
\| f_{j} \|_{ W^{ \infty, 2 }_{ \alpha(p_j) } } .
\end{align*}
Here, since $p_{j} \le \infty$, $j =1,\dots,N$, 
the embeddings \eqref{emb-WW}-\eqref{emb-bmoW} 
of Lemma \ref{Waembd} yield that
\[
\| f_{j} \|_{ W^{ \infty, 2 }_{ \alpha(p_j) } } 
\lesssim
\| f_{j} \|_{ W^{ p_j, 2 }_{ \alpha(p_j) } } 
\lesssim
\| f_{j} \|_{ h^{p_j} } .
\]
Therefore,
collecting the estimates above, we obtain
\begin{align*}&
\| T_{ \sigma } ( f_1, \dots, f_N ) \|_{ L^{\infty} }
\lesssim
|\sigma|_{m}
\prod_{j=1}^{N}
R_{j}^{ n/2 }
\left\| f_{j} \right\|_{ h^{ {p}_{j} } }
\end{align*}
with $h^{p_j}$ replaced by $bmo$ when $p_j = \infty$,
which completes the proof of Proposition \ref{main-prop} (2).

\section{Sharpness}
\label{secSharp}

In this section, we consider the sharpness of the conditions
of the order $m \in \R$ and the smoothness $\vecs=(s_0,s_1, \dots, s_N) \in [0,\infty)^{N+1}$ 
stated in Theorem \ref{main-thm-1}.

\subsection{Sharpness of $m$ of Theorem \ref{main-thm-1}}
In this subsection, we show the following.
\begin{prop}
Let $N \ge 2$,  
$p, p_1,\dots, p_N \in (0, \infty]$,  
$1/p \le 1/p_1 + \cdots + 1/p_N$,  
$m\in \R$,
$s_0,s_1,\dots,s_N \in [0,\infty)$, and
$t \in (0,\infty]$.
If 
\begin{equation*}
\op ( S^{m}_{0,0} ( \vecs, t ; \R^n, N) ) 
\subset 
B(H^{p_1} \times \cdots \times H^{p_N} \to L^{p}), 
\end{equation*}
with $L^p$ replaced by $BMO$ when $p=\infty$, 
then
\begin{equation}\label{criticalm}
m \le 
\min \Big\{ \frac{n}{p}, \frac{n}{2} \Big\} 
- \sum_{j =1}^{N} 
\max \Big\{ \frac{n}{p_j}, \frac{n}{2} \Big\}.  
\end{equation}
\end{prop}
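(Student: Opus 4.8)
The plan is to prove \eqref{criticalm} by exhibiting, for each linear form whose minimum is the right-hand side, a symbol in the class together with test functions for which the assumed boundedness fails once $m$ exceeds that form. Writing $\epsilon=(\epsilon_0,\epsilon_1,\dots,\epsilon_N)\in\{0,1\}^{N+1}$ and
\[
E_\epsilon=\epsilon_0\frac{n}{p}+(1-\epsilon_0)\frac{n}{2}-\sum_{j=1}^{N}\Big(\epsilon_j\frac{n}{p_j}+(1-\epsilon_j)\frac{n}{2}\Big),
\]
the right-hand side of \eqref{criticalm} equals $\min_{\epsilon}E_\epsilon$, so it suffices to show $m\le E_\epsilon$ for every $\epsilon$. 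A key preliminary simplification is that, by Proposition \ref{classicalderivative}, any symbol obeying the classical order-$m$ bounds $|\partial^{\alpha_0}_x\partial^{\alpha_1}_{\xi_1}\cdots\partial^{\alpha_N}_{\xi_N}\sigma|\lesssim\langle\vecxi\rangle^{m}$ for $|\alpha_j|\le[s_j]+1$ lies in $S^m_{0,0}(\vecs,t;\R^n,N)$, with quasi-norm controlled by those constants; hence it is enough to construct the counterexamples inside the fully smooth class, and the parameters $\vecs,t$ enter only through this inclusion. In this form the statement reduces to the ``only if'' part of Theorem B, with the target $L^p$ (resp.\ $BMO$) in place of $h^p$ (resp.\ $bmo$), and one may follow \cite{MT-IUMJ} for $N=2$ and \cite{KMT-JFA} for $N\ge3$.

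For a fixed $\epsilon$, take a dyadic parameter $2^\ell\to\infty$ and consider symbols of the form
\[
\sigma_\ell(x,\vecxi)=2^{\ell m}\,\Psi(x)\,e^{i\Phi_\epsilon(\vecxi)}\prod_{j=1}^{N}\psi(2^{-\ell}\xi_j),
\]
where $\Psi,\psi\in\calS$ are fixed bumps with $\psi$ supported in $\{1/2\le|\xi|\le2\}$ and $\Phi_\epsilon$ is an oscillatory phase chosen (e.g., positively homogeneous of some degree $\theta\in(0,1)$ in the variables it involves) so that all derivatives of $\langle\vecxi\rangle^{-m}\sigma_\ell$ are bounded — indeed decaying — uniformly in $\ell$ on the support $\{|\xi_j|\approx2^\ell\}$; this forces $\sigma_\ell\in S^m_{0,0}(\vecs,t;\R^n,N)$ with $\|\sigma_\ell\|_{S^m_{0,0}(\vecs,t;\R^n,N)}\lesssim1$ uniformly. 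Pair this with Schwartz test functions $f_j^{(\ell)}$ whose Fourier transforms are supported in $\{|\xi|\approx2^\ell\}$: being frequency-localized away from the origin, they have vanishing moments of all orders and therefore lie genuinely in $H^{p_j}$ (and in $bmo$ when $p_j=\infty$). The profiles of $\Phi_\epsilon$ and of the $f_j^{(\ell)}$ are calibrated by $\epsilon$: a slot with $\epsilon_j=1$ is built so that concentration yields $\|f_j^{(\ell)}\|_{H^{p_j}}\approx2^{\ell(n/p_j-n/2)}$ (the relevant regime when $p_j\le2$), a slot with $\epsilon_j=0$ uses randomized signs and Khinchin's inequality to keep $\|f_j^{(\ell)}\|_{L^{p_j}}$ at the generic size, suitably normalized (the relevant regime when $p_j\ge2$), and $\Phi_\epsilon$ likewise makes the output either frequency-concentrated ($\epsilon_0=1$) or spread ($\epsilon_0=0$). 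A direct computation then produces $\|T_{\sigma_\ell}(f_1^{(\ell)},\dots,f_N^{(\ell)})\|_{L^p}\gtrsim 2^{\ell(m-E_\epsilon)}\prod_{j=1}^{N}\|f_j^{(\ell)}\|_{H^{p_j}}$, and letting $\ell\to\infty$ forces $m\le E_\epsilon$.

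The main obstacle is the joint calibration of the phase $\Phi_\epsilon$, the normalizing factor $2^{\ell m}$, and the test profiles so that a single family realizes the exponent $2^{\ell(m-E_\epsilon)}$ while keeping the classical symbol bounds uniform in $\ell$ — delicate precisely because $S_{0,0}$ symbols carry essentially no decay in $\vecxi$, so the oscillation of $e^{i\Phi_\epsilon}$ must do all the work. The remaining points are routine: the sharp lower bound for the output, obtained pointwise or by duality against $L^{p'}$ when $p<\infty$ and by testing against a normalized $H^1$ atom when $p=\infty$; and the evaluation of the $H^{p_j}$ (and $bmo$) norms of the $f_j^{(\ell)}$ at the endpoints $p_j\in\{1,\infty\}$. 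All of these parallel the computations in \cite{MT-IUMJ,KMT-JFA}; the only genuinely new observations are that Proposition \ref{classicalderivative} renders the smoothness constraint of $S^m_{0,0}(\vecs,t;\R^n,N)$ harmless here, and that the cited constructions survive replacing the $h^p$/$bmo$ targets by $L^p$/$BMO$, since their inputs are frequency-localized Schwartz functions and their output estimates are lower bounds.
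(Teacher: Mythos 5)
Your reduction is precisely the paper's: Proposition \ref{classicalderivative} gives the inclusion $S^{m}_{0,0}(\R^n,N)\subset S^{m}_{0,0}(\vecs,t;\R^n,N)$, so the assumed boundedness for the Besov-type class restricts to the smooth H\"ormander class, and then \cite[Theorem 1.5]{KMT-JFA} yields \eqref{criticalm} directly. Your subsequent sketch of the oscillatory-phase counterexamples is unnecessary — that cited theorem is already stated for $H^{p_1}\times\cdots\times H^{p_N}\to L^p$ (with $BMO$ when $p=\infty$), exactly the form needed — but the opening reduction alone is a complete and correct argument matching the paper's.
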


This is immediately obtained by
the inclusion $S^{m}_{0,0}(\R^n, N) \subset 
S^{m}_{0,0} ( \vecs, t ; \R^n, N)$
stated in Lemma \ref{classicalderivative} and
the following theorem proved in 
\cite[Theorem 1.5]{KMT-JFA}.

\begin{thm} 
Let $N \ge 2$,  
$0 < p, p_1,\dots, p_N \le \infty$,  
$1/p \le 1/p_1 + \cdots + 1/p_N$, and  
$m\in \R$. 
If 
\begin{equation*}
\op ( S^{m}_{0,0}(\R^n, N) ) 
\subset 
B(H^{p_1} \times \cdots \times H^{p_N} \to L^{p}), 
\end{equation*}
with $L^p$ replaced by $BMO$ when $p=\infty$, 
then \eqref{criticalm} holds.
\end{thm}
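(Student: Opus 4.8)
The plan is to prove the contrapositive: assuming $m>\min\{n/p,n/2\}-\sum_{j=1}^{N}\max\{n/p_j,n/2\}$, I would construct, for every large $R\ge1$, an $x$-independent symbol $\sigma_R\in S^m_{0,0}(\R^n,N)$ with uniformly bounded symbol seminorms together with Schwartz functions $f_{1,R},\dots,f_{N,R}$ with $\prod_{j}\|f_{j,R}\|_{H^{p_j}}\lesssim1$, such that $\|T_{\sigma_R}(f_{1,R},\dots,f_{N,R})\|_{L^p}\to\infty$ (resp. $\|\,\cdot\,\|_{BMO}\to\infty$ if $p=\infty$). This contradicts the assumed inclusion, which by a standard uniform boundedness argument yields a quantitative bound $\|T_\sigma\|_{H^{p_1}\times\cdots\times H^{p_N}\to L^p}\lesssim\|\sigma\|$ in terms of finitely many symbol seminorms. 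Since $S^{m}_{0,0}\subset S^{m'}_{0,0}$ when $m\le m'$, it is enough to do this for $m$ only slightly above the critical value, in particular for $m<0$ (the critical value is $\le-n/2$). Restricting to $x$-independent symbols is convenient because then $T_{\sigma_R}$ and its transposes are all Fourier multipliers: the $j$-th transpose has symbol $\sigma_R$ composed with the affine substitution replacing $\xi_j$ by $-(\eta+\sum_{i\ne j}\xi_i)$, and since $m<0$ and $|\xi_j|\le|\eta+\sum_{i\ne j}\xi_i|+\sum_{i\ne j}|\xi_i|$, this substitution maps $S^m_{0,0}(\R^n,N)$ into itself with comparable seminorms. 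Hence it suffices to disprove boundedness for one representative configuration of the exponents and to recover the general inequality by applying transposes.

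As a warm-up, the weaker ``one--term'' inequalities $m\le\min\{n/p,n/2\}-\max\{n/p_j,n/2\}$ come essentially for free: freezing all inputs but the $j$-th to fixed functions whose Fourier transforms are bumps near the origin turns $T_\sigma$ into the linear operator $f_j\mapsto T_{\widetilde\sigma}f_j$ with $\widetilde\sigma(x,\xi_j)=\sigma(x,0,\dots,\xi_j,\dots,0)\in S^m_{0,0}(\R^n,1)$, bounded $H^{p_j}\to L^p$, so the known sharpness of the linear $S_{0,0}$ theory applies. What is genuinely needed here is the full sum $\sum_{j}\max\{n/p_j,n/2\}$, which forces all $N$ inputs to be ``large'' simultaneously; the freezing argument only orients the construction.

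The construction itself is built from superpositions of wave packets together with randomization. For a slot with $p_j\ge2$ (so $\max\{n/p_j,n/2\}=n/2$) I would take $\widehat{f_{j,R}}=\sum_{\nu}\varepsilon^{(j)}_{\nu}\,\theta(\,\cdot-\nu)$ over lattice points $\nu$ in an annulus of radius $\sim R$, with a fixed bump $\theta$ and independent random signs $\varepsilon^{(j)}_{\nu}$; Khintchine's inequality gives, for a suitable choice of signs, $\|f_{j,R}\|_{H^{p_j}}\approx\|f_{j,R}\|_{L^{p_j}}\approx R^{n/2}$. For a slot with $p_j<2$ (so $\max\{n/p_j,n/2\}=n/p_j$) I would instead use a single modulated, concentrated packet so that $f_{j,R}$ is spread over a ball of radius $\sim R$ and $\|f_{j,R}\|_{H^{p_j}}\approx R^{n/p_j}$, the nontrivial Hardy norm reflecting the (controlled) failure of the cancellation condition and being read off from the local $W^{p_j,2}$ description of $h^{p_j}$. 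The symbol realigns the packets and carries the critical growth: schematically $\sigma_R(\xi_1,\dots,\xi_N)=(1+|\xi_1|+\dots+|\xi_N|)^m\prod_{j}\overline{\Theta_{j,R}(\xi_j)}\,\chi(\xi_j/R)$, where $\Theta_{j,R}$ is the phase carried by $\widehat{f_{j,R}}$ ($\equiv1$ when $p_j\ge2$) and $\chi$ is a fixed cutoff to $\{|\xi|\sim1\}$, so that $(1+|\xi_1|+\dots+|\xi_N|)^m\approx R^m$ on the support of $\sigma_R\prod_j\widehat{f_{j,R}}$. Pairing $T_{\sigma_R}(f_{1,R},\dots,f_{N,R})$ against a suitable test function --- via $H^1$--$BMO$ duality against an atom on a small ball where the packets add in phase when $p=\infty$, and via Plancherel or Hausdorff--Young adapted to the fact that the output is Fourier-localized in a ball of radius $\sim R$ when $p<\infty$ --- yields the gain $R^{\min\{n/p,n/2\}}$. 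Collecting the estimates, the assumed boundedness forces $R^{\,m-\min\{n/p,n/2\}+\sum_j\max\{n/p_j,n/2\}}\lesssim1$ for all $R\ge1$, which is the desired inequality.

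The hard part is twofold. First, the symbols $\sigma_R$ must carry the conjugate phases $\overline{\Theta_{j,R}}$ to cancel the modulations of the $f_{j,R}$ with $p_j<2$, and one must verify that these oscillatory symbols still lie in $S^m_{0,0}$ with $R$-independent seminorms; this constrains how fast the phases may vary and is the delicate point of the design. Second, one needs a \emph{matching lower bound} for the $N$-linear output, which is a superposition of $\sim R^{Nn}$ elementary contributions from the $p_j\ge2$ slots: cancellation must be ruled out, which is where a good choice of the random signs enters (Khintchine, together with a Paley--Zygmund argument to ensure a single choice works simultaneously for all inputs and for the output) and where one localizes to the physical-space region in which all packets interfere constructively. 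Coupling the $N$ a priori independent slot mechanisms into one genuinely $N$-linear example, and --- if one wants a clean statement rather than one with an $\varepsilon$-loss --- superposing several dyadic scales $R=2^{k}$, are the remaining bookkeeping issues.
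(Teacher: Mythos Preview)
The paper does not prove this theorem; it is quoted verbatim from \cite[Theorem~1.5]{KMT-JFA} and used as a black box in Section~\ref{secSharp}. So there is no in--paper proof to compare against. The method behind the cited result, however, is visible in the paper's own proof of Proposition~\ref{sharpnessC}: one uses Wainger's deterministic oscillatory sums
\[
f_{a,b,\epsilon}(x)=\sum_{k\ne0}e^{-\epsilon|k|}\,|k|^{-b}\,e^{i|k|^{a}}\,e^{ik\cdot x}\,\varphi(x),\qquad 0<a<1,
\]
of Lemma~\ref{sharpness-lem}, and places the \emph{conjugate} phases $e^{-i|\xi_j|^{a_j}}$ in the symbol. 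Because $|\partial^{\alpha}_{\xi}|\xi|^{a}|\lesssim|\xi|^{a-|\alpha|}\le1$ for $|\xi|\ge1$ and $|\alpha|\ge1$, these phases are honest $S^{0}_{0,0}$ factors, and after cancellation the output of $T_\sigma$ is a \emph{sign--definite} lattice sum whose divergence above the critical $m$ is read off by counting; letting $a_j\to1$ (hence $b_j\to n/2$) yields $\max\{n/p_j,n/2\}$ in each slot with $p_j\ge2$, and slots with $p_j<2$ use \emph{concentrated} packets $2^{An/p_j}(\calF^{-1}\psi)(2^{A}\cdot)$ (not spread ones).

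Your random--sign/Khintchine route is genuinely different, and as written it does not reach the critical threshold. Take the cleanest test case, all $p_j\ge2$ and $2\le p<\infty$. With $\widehat{f_{j,R}}=\sum_{\nu\in D_R}\varepsilon^{(j)}_{\nu}\theta(\cdot-\nu)$ and $\sigma_R\approx R^{m}$ on the joint Fourier support, one has $T_{\sigma_R}(\vec f)\approx R^{m}\prod_j f_{j,R}$; Khintchine (and independence across $j$) gives both $\prod_j\|f_{j,R}\|_{L^{p_j}}\approx R^{Nn/2}$ and $\|\prod_j f_{j,R}\|_{L^{p}}\approx R^{Nn/2}$, so
\[
\frac{\|T_{\sigma_R}(\vec f)\|_{L^p}}{\prod_j\|f_{j,R}\|_{L^{p_j}}}\approx R^{m},
\]
which forces only $m\le0$, not $m\le n/p-Nn/2$. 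The ``gain $R^{\min\{n/p,n/2\}}$'' you invoke from Fourier localization of the output does not materialize: Nikol'skij/Hausdorff--Young inequalities bound $\|T\|_{L^p}$ \emph{from above} by $\|T\|_{L^2}$ on band--limited functions, not from below, and in fact for your random output the $L^p$ and $L^2$ norms are comparable. The same over--counting occurs in your $p_j<2$ slots: a modulated packet spread over $|x|\lesssim R$ has $\|f_{j,R}\|_{H^{p_j}}\approx R^{n/p_j}$, but it contributes the same $R^{n/p_j}$ to the output, so again the ratio is $R^{m}$. The transpose reduction you sketch is also incomplete: multilinear transposition permutes the configuration $(p_1,\dots,p_N;p)$ but does not collapse all configurations to one, and for $p_j\le1$ the dual of $H^{p_j}$ is a Lipschitz space, outside the $H^{p}/L^{p}$ scale. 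The missing idea is precisely what the Wainger phases provide: inputs with $\|f_j\|_{H^{p_j}}=O(1)$ whose contributions to $T_\sigma(\vec f)$ add \emph{without cancellation}, so that the full lattice sum $\sum_{\vecell}\langle\vecell\rangle^{m}\prod_j|\ell_j|^{-b_j}$ appears in the lower bound and diverges exactly above the critical $m$.
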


\subsection{Sharpness of $s_0$ of Theorem \ref{main-thm-1}}
In this subsection, we show the following.

\begin{prop}\label{sharpnessC}
Let $N \ge 2$,  
$p, p_1,\dots, p_N \in (0, \infty]$,
$s_0,s_1,\dots,s_N \in [0,\infty)$,
$t \in (0,\infty]$, and
\begin{equation} \label{mequ}
m=
\min \Big\{ \frac{n}{p}, \frac{n}{2} \Big\} 
- \sum_{j =1}^{N} 
\max \Big\{ \frac{n}{p_j}, \frac{n}{2} \Big\} .
\end{equation}
Suppose that the estimate 
\begin{equation} \label{sharpness-assump-s_0-t}
\begin{split}
\|T_{\sigma} 
\|_{ H^{p_1} \times \dots \times H^{p_N} \to L^p}
\lesssim 
\big\| 2^{\veck\cdot\vecs}
\| \langle \vecxi \rangle^{-m} \Delta_{\veck}\sigma(x,\vecxi)
\|_{L^{\infty}_{x,\vecxi} ((\R^n)^{N+1}) }
\big\|_{\ell^t_{\veck} ( (\N_0)^{N+1} ) }
\end{split}
\end{equation}
holds for all smooth functions $\sigma$ 
with the right hand side finite,
where $L^p$ is replaced by $BMO$ 
for $p=\infty$.
Then $s_0 \ge \min \{ n/p, n/2 \} $. 
\end{prop}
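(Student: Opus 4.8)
The plan is to prove the lower bound $s_0 \geq \min\{n/p, n/2\}$ by constructing, for each $\varepsilon > 0$, a family of symbols that witnesses the failure of \eqref{sharpness-assump-s_0-t} whenever $s_0 < \min\{n/p, n/2\}$. The construction should isolate the role of the space variable $x$: I would take a symbol of the form
\[
\sigma(x,\vecxi) = \langle \vecxi \rangle^{m}\, a(x)\, \prod_{j=1}^N \Psi(2^{-L}\xi_j),
\]
where $\Psi$ is a fixed bump supported in an annulus of size $2^L$ (so that $\langle\vecxi\rangle \approx 2^L$ on the support), and $a = a_L$ is a function oscillating at frequency $\approx 2^L$, for instance $a_L(x) = e^{i 2^L x\cdot e} \theta(x)$ with $\theta$ a fixed Schwartz cutoff, or a suitable modulated bump. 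The point is that $\Delta_{\veck}\sigma$ is then essentially supported on a single frequency block $k_0 \approx L$, $k_j \approx L$, so the right-hand side of \eqref{sharpness-assump-s_0-t} is comparable to $2^{L s_0}$ times $\|\langle\vecxi\rangle^{-m}\Delta_{\veck}\sigma\|_{L^\infty}$, which by the normalization is $\lesssim 2^{L s_0}$ (the $s_j$ contributions from the $\xi_j$-blocks are absorbed into the implied constant since the $\Psi$-factors are fixed once $L$ is chosen, or rather contribute a fixed power that we track).

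Next I would compute, or lower-bound, the operator norm of $T_{\sigma_L}$ from $H^{p_1}\times\cdots\times H^{p_N}$ to $L^p$ (or $BMO$). With the $\xi_j$-factors chosen as above, the inputs $f_j$ can be taken as fixed Schwartz functions with $\widehat{f_j}$ supported near the relevant annulus (rescaled appropriately), and the operator essentially becomes $T_{\sigma_L}(f_1,\dots,f_N)(x) \approx a_L(x)\,\prod_j g_j(x)$ for suitable fixed profiles, possibly after a dilation. The test functions should be normalized so that $\prod_j \|f_j\|_{H^{p_j}}$ stays bounded in $L$; this is where the exponents $\max\{n/p_j,n/2\}$ enter, but since those are already accounted for on the right-hand side of \eqref{sharpness-assump-s_0-t} they will cancel and not affect the conclusion about $s_0$. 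The output $\|a_L \cdot (\text{fixed profile})\|_{L^p}$ will be of size $\approx 2^{L\cdot 0}$ from the oscillation but we need it to beat $2^{L s_0}$ with $s_0 < \min\{n/p,n/2\}$; to manufacture the gain one typically superposes $\sim 2^{Ln}$ translated/modulated copies (a randomized or lacunary sum à la Khintchine) so that the $L^p$ norm of the sum grows like $2^{Ln/p}$ while the symbol norm grows only like $2^{L(s_0 + \text{something})}$. Comparing exponents gives a contradiction precisely when $s_0 < \min\{n/p,n/2\}$; the $n/2$ threshold appears because for $p\ge 2$ the relevant Bessel-potential / Sobolev embedding for the $L^2_{ul}$-type norm in the symbol class caps the gain at $n/2$.

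The main obstacle I anticipate is bookkeeping the interaction between the $x$-oscillation and the $\xi$-localization so that $\Delta_{\veck}\sigma$ genuinely lives on one block and the right-hand side of \eqref{sharpness-assump-s_0-t} is exactly $\approx 2^{Ls_0}$ (times harmless fixed constants), rather than leaking into neighboring blocks in a way that inflates the $\ell^t_{\veck}$-norm. A clean way to handle this is to build $\sigma_L$ directly out of the Littlewood--Paley pieces, i.e.\ set $\Delta_{\veck}\sigma = 0$ except for $\veck$ in a bounded neighborhood of $(L,L,\dots,L)$, by choosing the bumps $\psi_{k}(D_x)$-adapted from the outset. The secondary difficulty is the randomization/superposition step needed to extract the full $L^p$ growth for $p < 2$; this is standard (it mirrors the sharpness arguments in \cite[Section 5]{Miyachi-MN} and \cite[Theorem 1.5]{KMT-JFA}) but must be carried out with the correct normalizations of the $f_j$ so that their $H^{p_j}$-norms and the symbol norm are simultaneously controlled. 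Once the exponent comparison is set up, letting $L\to\infty$ forces $s_0 \ge \min\{n/p,n/2\}$, completing the proof.
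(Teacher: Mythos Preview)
Your proposal has a genuine gap. The single-modulation symbol $\sigma_L(x,\vecxi)=\langle\vecxi\rangle^{m}\,e^{i2^{L}x\cdot e}\theta(x)\prod_j\Psi(2^{-L}\xi_j)$ does place $\Delta_{\veck}\sigma_L$ at $k_0\approx L$, $k_j\approx 0$, so the right-hand side of \eqref{sharpness-assump-s_0-t} is indeed $\approx 2^{Ls_0}$. But when you feed in test functions localized at $|\xi_j|\approx 2^{L}$ and compute $\|T_{\sigma_L}\|_{H^{p_1}\times\cdots\times H^{p_N}\to L^p}$, the exponent count collapses: after normalizing the $f_j$'s you get at best $\|T_{\sigma_L}\|\approx 2^{Lm}$, and since $m<0\le s_0$ the inequality $2^{Lm}\lesssim 2^{Ls_0}$ carries no information. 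You correctly sense this and propose a Khintchine-type superposition of $\sim 2^{Ln}$ modulated copies, but that runs into a real obstruction: the symbol norm in \eqref{sharpness-assump-s_0-t} is an $L^{\infty}_{x,\vecxi}$ norm (not $L^{2}_{ul}$ as you write in the last paragraph), and random-sign sums of $2^{Ln}$ exponentials have $L^{\infty}$ norm of order $2^{Ln}$, not $2^{Ln/2}$. So the randomization does not keep the right-hand side under control, and there is no clear way to extract the threshold $\min\{n/p,n/2\}$ from this construction.

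The paper's argument is built on two ideas you are missing. First, the $x$-oscillation is coupled to $\vecxi$ through the explicit phase $e^{-ix\cdot(\xi_1+\cdots+\xi_N)}$ in the symbol; this simultaneously (i) forces $|\partial_x^{\alpha_0}\sigma_A|\lesssim\langle\vecxi\rangle^{m-s_0+|\alpha_0|}$, which via an interpolation of Taylor estimates yields the bound $2^{\veck\cdot\vecs}\|\langle\vecxi\rangle^{-m}\Delta_{\veck}\sigma_A\|_{L^\infty}\lesssim 1$ uniformly in $A$, and (ii) cancels the kernel phase so that $T_{\sigma_A}(f_1,\dots,f_N)(x)$ collapses to $\varphi(x)$ times an explicit lattice sum over $\ell_1,\dots,\ell_N\in D_A$. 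Second, the test functions are chosen differently according to whether $p_j\ge 2$ or $p_j<2$: for $p_j\ge 2$ one uses Wainger's functions $f_{a_j,b_j,\epsilon}(x)=\sum_{\ell\ne 0}e^{-\epsilon|\ell|}|\ell|^{-b_j}e^{i|\ell|^{a_j}}e^{i\ell\cdot x}\calF^{-1}\varphi(x)$ (bounded in $L^{p_j}$ by Lemma~\ref{sharpness-lem}), while for $p_j<2$ one uses rescaled bumps $2^{An/p_j}(\calF^{-1}\psi)(2^{A}x)$. The resulting lattice sum then grows like $2^{A(Nn+m-s_0)}\prod_{p_j\ge 2}2^{-Ab_j}\prod_{p_j<2}2^{An(1/p_j-1)}$, and sending $A\to\infty$, $a_j\to 1$, $\varepsilon_j\to 0$ forces $s_0\ge \min\{n/p,n/2\}$. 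None of these ingredients appears in your sketch; the phase-cancellation device and the Wainger test functions are the heart of the matter.
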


To show this, 
we will use the following lemma
which was given by Wainger \cite[Theorem 10]{Wainger} and 
by Miyachi and Tomita \cite[Lemma 6.1]{MT-IUMJ}.

\begin{lem} \label{sharpness-lem}
Let $0<a<1$, $0<b<n$,
and $\varphi \in \calS(\R^n)$.
For $\epsilon>0$, set 
\begin{equation*} 
f_{a, b, \epsilon}(x)
=
\sum_{k \in \Z^n \setminus \{0\}}
e^{-\epsilon|k|}|k|^{-b}e^{i|k|^a}e^{i k\cdot x}\varphi(x).
\end{equation*}
If $1\le p \le \infty$ and $b>n-an/2-n/p+an/p$, then
$\sup_{\epsilon>0}\|f_{a,b,\epsilon}\|_{L^p(\R^n)}<\infty$.
\end{lem}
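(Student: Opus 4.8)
The plan is to prove directly that $\sup_{\epsilon>0}\|f_{a,b,\epsilon}\|_{L^p(\R^n)}<\infty$ under the hypotheses $0<a<1$, $0<b<n$, $1\le p\le\infty$, and $b>n-an/2-n/p+an/p = n(1-a)(1-1/p) + an/2$. The starting point is to recognize $f_{a,b,\epsilon}$ as $\varphi$ times a periodization-type sum, and to rewrite the infinite sum as a convolution. Concretely, set $K_{a,b,\epsilon}(x) = \sum_{k\in\Z^n\setminus\{0\}} e^{-\epsilon|k|}|k|^{-b}e^{i|k|^a}e^{ik\cdot x}$, so that $f_{a,b,\epsilon} = \varphi\cdot K_{a,b,\epsilon}$; since $\varphi\in\calS$ and we only need an $L^p$ bound on a fixed compact neighborhood of its support, it suffices to bound $K_{a,b,\epsilon}$ in $L^p$ on the torus $\T^n$ (identified with a fundamental cube), uniformly in $\epsilon$. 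The key is then to compare the discrete sum over $\Z^n$ with the continuous integral $\int_{\R^n} e^{-\epsilon|\xi|}|\xi|^{-b}e^{i|\xi|^a}e^{i\xi\cdot x}\,d\xi$ via Poisson summation, reducing everything to the oscillatory integral $I_{a,b,\epsilon}(y) = \int_{\R^n} e^{-\epsilon|\xi|}|\xi|^{-b}e^{i|\xi|^a}e^{i\xi\cdot y}\,d\xi$ for $y$ ranging over $x + 2\pi\Z^n$.

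The main analytic step is the pointwise estimate on $I_{a,b,\epsilon}(y)$, uniform in $\epsilon>0$. One splits the $\xi$-integral dyadically: $|\xi|\sim 2^j$. On the piece $|\xi|\sim R$ the phase is $\Phi(\xi) = |\xi|^a + \xi\cdot y/R$... more precisely one writes $e^{i|\xi|^a + i\xi\cdot y}$ and performs stationary phase / nonstationary phase analysis in the radial variable. Writing $\xi = \rho\omega$ with $\rho>0$, $\omega\in S^{n-1}$, the radial phase is $\rho^a + \rho\,\omega\cdot y$; its derivative $a\rho^{a-1} + \omega\cdot y$ vanishes only when $\omega\cdot y<0$ and $\rho = (\,-\omega\cdot y/a\,)^{1/(a-1)}$, i.e.\ $\rho\sim |y|^{1/(a-1)}$, which since $a<1$ means $\rho\to 0$ as $|y|\to\infty$. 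Doing the $S^{n-1}$ integral by stationary phase (gaining $\rho^{-(n-1)/2}|y|^{-(n-1)/2}$ from the sphere when $\rho|y|\gtrsim1$) and then the radial integral by stationary phase near the critical radius (gaining a factor $\rho^{(2-a)/2}$ from the second derivative $a(a-1)\rho^{a-2}$), one obtains, after bookkeeping the powers, a bound of the shape $|I_{a,b,\epsilon}(y)| \lesssim |y|^{-(2b-2n+an)/(2-2a)}$ for $|y|$ large, together with $|I_{a,b,\epsilon}(0)|\lesssim 1$ when $b<n$ (absolute convergence of the continuous integral near $0$), all uniformly in $\epsilon$. The exponent here is exactly the one that makes $\sum_{\nu\in\Z^n} |I_{a,b,\epsilon}(x+2\pi\nu)|$, or its $\ell^p$-type version, converge precisely under $b>n-an/2-n/p+an/p$; the $p=\infty$ and $p=1$ endpoints then follow, and interpolation (or direct estimation) covers $1<p<\infty$.

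Assembling: by Poisson summation $K_{a,b,\epsilon}(x) = (2\pi)^n\sum_{\nu\in\Z^n} I_{a,b,\epsilon}(x+2\pi\nu)$ (justified since the summand is Schwartz-decaying in $\epsilon$ for each fixed $\epsilon$, the identity being exact), and then
\[
\|K_{a,b,\epsilon}\|_{L^\infty(\T^n)} \lesssim \sum_{\nu\in\Z^n}\sup_{x\in\T^n}|I_{a,b,\epsilon}(x+2\pi\nu)| \lesssim 1
\]
when $b > n - an/2$ (the $p=\infty$ case of the hypothesis), using the decay $|y|^{-\gamma}$ with $\gamma = (2b-2n+an)/(2-2a) > n$. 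For general $p$, one instead estimates $\|K_{a,b,\epsilon}\|_{L^p(\T^n)}$ by Minkowski against the translates: $\|K_{a,b,\epsilon}\|_{L^p(\T^n)} \lesssim \bigl(\sum_{\nu}\|I_{a,b,\epsilon}(\cdot+2\pi\nu)\|_{L^p(\T^n)}^{\min\{1,p\}}\bigr)^{1/\min\{1,p\}}$ when $p\le 1$ — but actually $p\ge1$ here, so plain Minkowski gives $\sum_\nu\|I_{a,b,\epsilon}(\cdot+2\pi\nu)\|_{L^p(\T^n)}$; combining with the interior bound near $\nu$ such that $x+2\pi\nu\approx 0$ (where one uses $\|\,|\cdot|^{-b}\ichi_{|\cdot|\le C}\|_{L^p}<\infty$ iff $b<n/p$, handled separately via the $|\xi|\sim$ small piece) yields the claim for $1\le p\le\infty$. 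Finally multiply by $\varphi$ and use that $\varphi$ is bounded with compact support to pass from $L^p(\T^n)$ to $L^p(\R^n)$.

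The hard part will be the uniform-in-$\epsilon$ stationary phase estimate for $I_{a,b,\epsilon}(y)$: one must handle the interplay of the two oscillations $|\xi|^a$ and $\xi\cdot y$ across dyadic scales, control the regime $\rho|y|\lesssim 1$ (no sphere gain, must use the radial oscillation or the size estimate directly), verify that the $e^{-\epsilon|\xi|}$ damping never worsens the bounds (it only helps, being a decreasing factor, but differentiating it in integration-by-parts must be shown harmless — its derivatives are $O(\epsilon\cdot(\text{bounded}))$ on the relevant scales), and assemble the exponents correctly so that the borderline $b = n-an/2-n/p+an/p$ is exactly the threshold. This is the classical Wainger-type oscillatory estimate; I would organize it as a self-contained lemma on $I_{a,b,\epsilon}$, prove the three pieces (small $|\xi|$, medium $|\xi||y|$, large-scale stationary phase) separately, and then feed the resulting decay into the Poisson-summation bookkeeping above.
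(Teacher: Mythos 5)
The paper does not actually prove this lemma; it is quoted directly from Wainger (Theorem~10 of his Memoir) and from Miyachi--Tomita \cite[Lemma 6.1]{MT-IUMJ}, so there is no in-paper proof to compare against. Your overall strategy---Poisson summation to convert the lattice sum into translates of the continuous oscillatory integral $I_{a,b,\epsilon}$, then stationary phase---is the classical route and is essentially how Wainger argues, so the plan is sound in spirit.

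However, the key oscillatory estimate as you state it is wrong, and not in a cosmetic way. First, the singular behavior of the periodized kernel is at $y=0$, not at $|y|\to\infty$: the sum runs over $|k|\ge 1$, the radial stationary point is at $\rho_*=(a/|y|)^{1/(1-a)}$, and since $a<1$ this point lies in the domain $\rho\ge 1$ precisely when $|y|$ is \emph{small}; it is the stationary-phase contribution from large $\rho$ that produces the singularity at the origin, while for $|y|\gtrsim 1$ the radial phase has no critical point and one gets rapid decay. Second, the exponent you wrote, $(2b-2n+an)/(2(1-a))$, is the \emph{negative} of the correct one; the stationary-phase computation gives $|I(y)|\lesssim |y|^{-\gamma}$ near $y=0$ with $\gamma=\frac{n(2-a)-2b}{2(1-a)}$, and it is this $\gamma$ that reproduces the stated threshold: $\gamma p<n$ is exactly $b>n-an/2-n/p+an/p$ (sanity check at $p=2$: $\gamma<n/2\iff b>n/2$). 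Feeding your exponent into the $p=\infty$ test $\gamma>n$ produces $b>2n-\tfrac{3}{2}an$ instead of $b>n-\tfrac{1}{2}an$, so the bookkeeping cannot close. Third, the $|\xi|$-small piece does not contribute a $|y|^{-b}$ singularity at the origin: $|\xi|^{-b}\ichi_{|\xi|\le 1}$ is in $L^1$ for $b<n$, so its Fourier transform is bounded near $y=0$; the role of the small-frequency piece is the slow tail of $\check g$ at infinity (which matters for convergence of the Poisson sum and must be handled, e.g., by discarding the finitely many small $k$ as a bounded trigonometric polynomial), not the $L^p(\T^n)$ integrability near $0$. With the regime and the exponent corrected, and with the low-frequency truncation handled as above, the outline can be turned into a proof; as written, the central estimate and the ``$b<n/p$'' claim near the origin are incorrect, so the proposal does not yet establish the lemma.
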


Now, let us begin with the proof of Proposition \ref{sharpnessC}.
See also \cite[Proposition 7.3]{KMT-JMSJ}.

\begin{proof}
In this proof, for $p_j \in (0, \infty]$, 
we define the sets $J$ and $J^{c}$ by
\begin{align*} 
J=
\left\{ 
j \in \{1, \dots, N\} 
: 2 \le p_j \le \infty
\right\},
\quad
J^{c}=
\left\{ 
j \in \{1, \dots, N\} 
: 0<p_j <2
\right\} .
\end{align*}

It is sufficient to show that 
the condition $s_0 \ge \min \{ n/p, n/2 \} $ is deduced 
under the assumption \eqref{sharpness-assump-s_0-t} with $t=\infty$. 
In fact, once this is proved, 
then replacing  
$s_j$ by $s_j+\epsilon$, $\epsilon>0$,
$j=0,1,\dots,N$, 
we see that \eqref{sharpness-assump-s_0-t} with $t\in (0, \infty)$ 
implies 
$s_0+\epsilon \ge \min \{ n/p, n/2 \} $. 
Thus since $\epsilon >0$ is arbitrary, 
we must have $s_0 \ge \min \{ n/p, n/2 \} $.

Suppose 
\eqref{sharpness-assump-s_0-t} holds with $t=\infty$. 
For $\delta_1, \delta_2 > 0$,
we take functions $\varphi, \psi \in \calS(\R^n)$ such that
\begin{align}
&\label{conditionofphi}
\supp \varphi \subset \{ \xi \in \R^n: |\xi| \le \delta_1 \}, 
\quad
\int \varphi \neq 0,
\quad
\int \varphi^2 \neq 0,
\\&\nonumber
\supp \psi \subset \{ \xi \in \R^n: 2^{-1/2-\delta_2} \le |\xi| \le 2^{1/2+\delta_2} \}, 
\\&\nonumber
\psi = 1 \quad\text{on}\quad \{ \xi \in \R^n: 2^{-1/2+\delta_2} \le |\xi| \le 2^{1/2-\delta_2} \}
.
\end{align}
(We may note that $\calF^{-1}{\psi}$ has integral zero.)
For $\delta_3 > 0$ and $A \in \N$,
we set
\begin{align*}
D_{A} =
\{ \ell \in \Z^n:
2^{A-\delta_3} \le |\ell| \le 2^{A+\delta_3} \}. 
\end{align*}
Here, notice that
there exist $\delta_1, \delta_2, \delta_3 > 0$ such that
for any $A \in \N$
\begin{equation} \label{psiphi=phi}
\psi(2^{-A} \cdot) = 1
\quad\text{on}\quad
\supp \varphi(\cdot-\ell)
\;\;\text{with}\;\;
\ell \in D_{A} 
\end{equation}
(for instance, take 
$\delta_1 = 2^{-10}$, $\delta_2 = 2^{-2}$, 
and $\delta_3 = 2^{-3}$).
For $A \in \N$ and $\epsilon>0$
we set
\begin{align*}
&\sigma_{A} (x,\vecxi)
=\varphi(x)
e^{-ix \cdot (\xi_1+\dots+\xi_N)}
\sum_{ \ell_1,\dots, \ell_N \in D_{A} }
\langle \vecell \rangle^{m-s_0}
\Big( \prod_{j \in J} e^{-i|\ell_j|^{a_j}} \Big)
\Big( \prod_{j=1}^N \varphi(\xi_j-\ell_j) \Big),
\\
&f_{a_j,b_j, \epsilon }(x)
=\sum_{\ell_j \in \Z^n \setminus \{0\}} e^{-\epsilon|\ell_j|}
|\ell_j|^{-b_j}e^{i|\ell_j|^{a_j}}e^{i \ell_j \cdot x}
\calF^{-1}{\varphi}(x),
\quad j \in J,
\\
&f_{j,A}(x)= 2^{A n/p_j}
(\calF^{-1}{\psi})(2^{A} x),
\quad j \in J^{c},
\end{align*}
where 
$\vecell=(\ell_1,\dots,\ell_{N}) \in (\Z^n)^N$,
$0<a_j<1$, and
$b_j=n-a_j n/2-n/p_j+a_j n/p_j+\varepsilon_j$ 
with $\varepsilon_j>0$.
Here, we choose sufficiently small $\varepsilon_j>0$ satisfying $0<b_j<n$.

Firstly, we show that
\begin{align} 
& \label{sigma}
\| 2^{\veck\cdot\vecs}
\langle \vecxi \rangle^{-m} 
\Delta_{\veck} \sigma_{A} (x,\vecxi)
\|_{L^{\infty}_{x,\vecxi} \ell^{\infty}_{\veck} } \lesssim 1,
\\
&\label{fjt}
\|f_{a_j,b_j, \epsilon}\|_{H^{p_j}} \lesssim 1, 
\qquad  j \in J,
\\
&\label{fjell}
\|f_{j,A}\|_{H^{p_j}} \lesssim 1, 
\qquad  j \in J^{c} ,
\end{align}
with the implicit constants independent of $A \in \N$ and $\epsilon>0$.
Since $H^{p} = L^{p}$ for $2 \le p \le \infty$, 
\eqref{fjt} follows from Lemma \ref{sharpness-lem}.
Since $\|f_{j,A}\|_{H^{p_j}} = \| \calF^{-1}{\psi} \|_{H^{p_j}}$,
\eqref{fjell} holds.
In what follows, 
we shall consider \eqref{sigma}.
Let $L_j$ be a nonnegative integer
satisfying $L_j \ge s_j$, $j=0,1,\dots,N$.
Since 
\[
|\partial^{\alpha_0}_x
\partial^{\alpha_1}_{\xi_1} \dots \partial^{\alpha_N}_{\xi_N}
\sigma_{A} (x,\vecxi)|
\le
C_{\alpha_0,\alpha_1,\dots,\alpha_N}
\langle \vecxi \rangle^{m-s_0+|\alpha_0|} ,
\]
we see from the Taylor expansion that
\[
|\Delta_{\veck} \sigma_{A} (x,\vecxi)|
\lesssim
\begin{cases}
\langle \vecxi \rangle^{m-s_0}
2^{-k_1 L_1-\dots-k_N L_N},
\\
\langle \vecxi \rangle^{m-s_0+L_0}
2^{-k_0 L_0-k_1 L_1-\dots-k_N L_N}
\end{cases}
\]
(see \cite[Subsection 5.3]{KMT-JPDOA}).
By taking $0 \le \theta_0 \le 1$ satisfying $s_0=L_0\theta_0$,
we have
\begin{align} \label{Taylor-theta0}
\begin{split}&
|\Delta_{\veck} \sigma_{A} (x,\vecxi)|
=|\Delta_{\veck} \sigma_{A} (x,\vecxi)|^{1-\theta_0}
|\Delta_{\veck} \sigma_{A} (x,\vecxi)|^{\theta_0}
\\
&\lesssim
\left(\langle \vecxi \rangle^{m-s_0}
2^{-k_1L_1-\dots-k_NL_N}\right)^{1-\theta_0}
\left(\langle \vecxi \rangle^{m-s_0+L_0}
2^{-k_0L_0-k_1L_1-\dots-k_NL_N}\right)^{\theta_0}
\\
&=\langle \vecxi \rangle^{m}
2^{-k_0s_0-k_1L_1-\dots-k_NL_N} 
\end{split}
\end{align}
for any $\veck \in (\N_0)^{N+1}$.
Thus, we obtain \eqref{sigma}
with the implicit constant independent of $A \in \N$.

Choosing $\delta_1, \delta_2, \delta_3 > 0$ such that \eqref{psiphi=phi},
we have by the condition \eqref{conditionofphi}
\begin{align*}
T_{\sigma_{A}}(f_{\dots},\cdots,f_{\dots})(x)
&=
(2\pi )^{-Nn} 
\varphi(x)
\sum_{ \ell_1,\dots,\ell_N\in D_{A} }
\langle \vecell \rangle^{m-s_0}
\prod_{j \in J}
e^{ -\epsilon |\ell_{j}| }
|\ell_j|^{-b_j}
\int_{\R^n}\varphi(\xi_j-\ell_j)^2\, d\xi_j
\\& \qquad \qquad \times
\prod_{j \in J^{c}}
2^{A n(1/p_j-1)}
\int_{\R^n}
{\psi}(2^{- A} \xi_{j})
\varphi(\xi_j-\ell_j) \, d\xi_j
\\&=
C\varphi(x)
\sum_{ \ell_1,\dots,\ell_N \in D_{A} }
\langle \vecell \rangle^{m-s_0}
\prod_{j \in J} e^{ -\epsilon |\ell_{j}| } |\ell_j|^{-b_j}
\prod_{j \in J^{c}} 2^{A n(1/p_j-1)} .
\end{align*}
Hence,
collecting \eqref{sigma}, \eqref{fjt}, \eqref{fjell},
and the assumption \eqref{sharpness-assump-s_0-t} with $t=\infty$,
we see that 
\begin{equation*}
\sum_{ \ell_1,\dots,\ell_N \in D_{A} }
\langle \vecell \rangle^{m-s_0}
\prod_{j \in J} e^{-\epsilon|\ell_{j}|} |\ell_j|^{-b_j}
\prod_{j \in J^{c}} 2^{A n(1/p_j-1)} 
\lesssim 1
\end{equation*}
with the implicit constant independent of $\epsilon > 0$,
where we used that 
$\|\lambda f\|_{BMO} = |\lambda| \|f\|_{BMO}$,
$\lambda \in \R$,
when $p = \infty$.
Then, a limiting argument gives that
\begin{equation*}
\sum_{ \ell_1,\dots,\ell_N \in D_{A} }
\langle \vecell \rangle^{m-s_0}
\prod_{j \in J} |\ell_j|^{-b_j}
\prod_{j \in J^{c}} 2^{A n(1/p_j-1)} 
\lesssim 1,
\end{equation*}
and thus,
\begin{equation*} 
2^{ANn}
2^{A(m-s_0)}
\prod_{j \in J} 2^{-Ab_j}
\prod_{j \in J^{c}} 2^{A n(1/p_j-1)} 
\lesssim 1.
\end{equation*}
Since this holds for arbitrarily large $A \in \N$, we have
\[
Nn +(m-s_0) -\sum_{j \in J} b_j + \sum_{j \in J^{c}} \Big( \frac{n}{p_j} - n \Big) \le 0.
\]
Since $b_j \to n/2$ by taking the limits as $a_j \to 1$ and $\varepsilon_j \to 0$, 
we have
\[
s_0 \ge m +\sum_{j \in J} \frac{n}{2} + \sum_{j \in J^{c}} \frac{n}{p_j} ,
\]
which implies from \eqref{mequ} that $s_0 \ge \min \{ n/p, n/2 \} $.
This completes the proof.
\end{proof}

\subsection{Sharpness of $s_1,\dots,s_N$ of Theorem \ref{main-thm-1}}
We show that 
the conditions on $s_1,\dots,s_N$
stated in Theorems \ref{main-thm-1} are sharp.
See also 
\cite[Proposition 5.2]{KMT-JPDOA}
and \cite[Proposition 7.4]{KMT-JMSJ}.

\begin{lem}\label{sharp-smoothness-s_j-1}
Let $N \ge 2$,  
$p, p_1,\dots, p_N \in (0, \infty]$,  
$1/p = 1/p_1 + \cdots + 1/p_N$,  
$s_0,s_1,\dots,s_N \in [0,\infty)$,
$t \in (0,\infty]$, and $m \in \R$.
Suppose that the estimate 
\begin{equation}\label{sharpness-assump-s_j-t-n/pj}
\begin{split}
\|T_{\sigma} 
\|_{ H^{p_1} \times \dots \times H^{p_N} \to L^p}
\lesssim 
\big\|
2^{\veck\cdot\vecs}
\| \langle \vecxi \rangle^{-m} 
\Delta_{\veck}\sigma(x,\vecxi)
\|_{L^{\infty}_{x,\vecxi} ((\R^n)^{N+1}) }
\big\|_{\ell^t_{\veck} ((\N_0)^{N+1}) }
\end{split}
\end{equation}
holds for all smooth functions $\sigma$ 
with the right hand side finite,
where $L^p$ is replaced by $BMO$ 
for $p=\infty$.
Then $s_j \ge n/p_j$, $j = 1,\dots,N$. 
\end{lem}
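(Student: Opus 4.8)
The plan is to prove necessity by a counterexample: for each $\epsilon>0$ I would produce a family of smooth symbols whose $S^{m}_{0,0}(\vecs,t;\R^n,N)$-quasi-norms stay bounded, but whose operators violate \eqref{sharpness-assump-s_j-t-n/pj} unless $s_j\ge n/p_j-\epsilon$, and then let $\epsilon\to0$. Two preliminary reductions are in order. First, it suffices to treat $j=1$: both the class $S^{m}_{0,0}(\vecs,t;\R^n,N)$ and the family of operators $T_\sigma$ are invariant under a permutation of $(\xi_1,\dots,\xi_N)$ accompanied by the corresponding permutation of the inputs $(f_1,\dots,f_N)$ (and of the exponents $p_1,\dots,p_N$), which permutes $(s_1,\dots,s_N)$; so the case of a general $j$ follows by relabelling. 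Second, exactly as in the proof of Proposition \ref{sharpnessC}, one may assume $t=\infty$: if $s_1\ge n/p_1$ is known whenever \eqref{sharpness-assump-s_j-t-n/pj} holds with $t=\infty$, then, given \eqref{sharpness-assump-s_j-t-n/pj} for a finite $t$, replacing each $s_i$ by $s_i+\epsilon$ only enlarges the right-hand side to a comparable $\ell^{\infty}_{\veck}$ expression, whence $s_1+\epsilon\ge n/p_1$ for every $\epsilon>0$.

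For the main construction I would follow the schemes of \cite[Proposition 5.2]{KMT-JPDOA} and \cite[Proposition 7.4]{KMT-JMSJ}, adapted to the present norm in the same way that Proposition \ref{sharpnessC} adapts the $s_0$-construction. Fix a large $A\in\N$ together with auxiliary parameters $a_1\in(0,1)$, $\varepsilon_1>0$, $\epsilon>0$. One builds a smooth symbol $\sigma_A$ whose frequency content in the $\xi_1$-variable is concentrated at scale $2^A$: schematically, a sum over $\ell_1\in D_A:=\Z^n\cap\{2^{A-\delta}\le|\ell_1|\le2^{A+\delta}\}$ (and, as in Proposition \ref{sharpnessC}, over $\ell_j\in D_A$ for $j\ge2$) of narrow pieces in $\xi_1$, carrying a Wainger-type phase $e^{-i|\ell_1|^{a_1}}$ precisely when $p_1\ge2$ (and similarly $e^{-i|\ell_j|^{a_j}}$ for those $j\ge2$ with $p_j\ge2$), times a normalizing amplitude so that the single Littlewood--Paley block at $k_1\sim A$ carries unit weight; the remaining $x$- and $\xi_j$-dependence ($j\ge2$) is through unit-scale bumps. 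This makes $\Delta_{\veck}\sigma_A$ supported at $k_1\sim A$ with $k_0,k_2,\dots,k_N=O(1)$, and a Bernstein--Taylor estimate of the type used for \eqref{Taylor-theta0} gives $\|\sigma_A\|_{S^{m}_{0,0}(\vecs,\infty;\R^n,N)}\lesssim1$ uniformly in $A$. As inputs one takes the extremal $H^{p_1}$-function — the $L^{p_1}$-normalized rescaled bump $2^{An/p_1}(\mathcal F^{-1}\psi)(2^A\cdot)$ when $p_1<2$, and the Wainger function $f_{a_1,b_1,\epsilon}$ of Lemma \ref{sharpness-lem} with $b_1$ chosen just above the threshold $b_1>n-a_1n/2-n/p_1+a_1n/p_1$ when $p_1\ge2$ — together with fixed Schwartz bumps $f_j$, $j=2,\dots,N$, having small compact Fourier support and $\|f_j\|_{H^{p_j}}\approx1$; each of these has the required $H^{p_j}$-bound uniformly in $A$ (by Lemma \ref{sharpness-lem} when a Wainger function is used — note $p\ge2\Rightarrow H^p=L^p$ — and by the dilation/localization properties of $H^{p_j}$ otherwise).

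Then one evaluates $T_{\sigma_A}(f_1,\dots,f_N)$ explicitly. By construction the Wainger phases built into $\sigma_A$ cancel those of the test functions, so that the multiple integral collapses to a coherent sum over $D_A^N$ (no cancellation), whose $L^p$-norm — respectively $BMO$-norm when $p=\infty$, where one uses the homogeneity $\|\lambda f\|_{BMO}=|\lambda|\,\|f\|_{BMO}$ as in the proof of Proposition \ref{sharpnessC} — is, after passing to the limits $\epsilon\to0$, $a_1\to1$, $\varepsilon_1\to0$, bounded below by a constant times $2^{A(n/p_1-s_1)}$. Feeding this into \eqref{sharpness-assump-s_j-t-n/pj} (with $t=\infty$) and using $\|\sigma_A\|_{S^{m}_{0,0}(\vecs,\infty)}\lesssim1$ and $\prod_j\|f_j\|_{H^{p_j}}\lesssim1$ yields $2^{A(n/p_1-s_1)}\lesssim1$ for all large $A$, hence $s_1\ge n/p_1$.

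The step I expect to be the crux is the middle one: arranging $\sigma_A$ and the inputs so that, simultaneously, the $\xi_1$-roughness of $\sigma_A$ genuinely costs a full factor $2^{As_1}$ in the $S^{m}_{0,0}(\vecs,\infty)$-quasi-norm, the pairing of this rough symbol against the extremal $f_1$ does not degenerate, and the accumulated gain from the coherent $D_A^N$-sum — together with the $\langle\vecxi\rangle^{-m}$ weight — combines to leave exactly the exponent $n/p_1$ rather than some weaker value such as $n/p_1-n/2$. Getting every power of $2^A$ to balance, in particular choosing $b_1$ precisely at the Wainger threshold so that $\|f_1\|_{H^{p_1}}\lesssim1$ while the resonant sum is as large as possible, is the delicate part, and is the reason the argument must be organized around Lemma \ref{sharpness-lem}; the split according to whether $p_1\ge2$ or $p_1<2$ (and likewise for the $f_j$, $j\ge2$) is forced exactly as in Proposition \ref{sharpnessC}.
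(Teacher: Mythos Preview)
Your construction has a genuine gap: it does not produce any $\xi_1$-roughness. You describe the symbol as a sum over $\ell_1\in D_A$ of ``narrow pieces in $\xi_1$'' such as $\varphi(\xi_1-\ell_1)$, and assert that $\Delta_{\veck}\sigma_A$ is then supported at $k_1\sim A$. But the index $k_1$ measures the Littlewood--Paley scale of the \emph{$\xi_1$-Fourier transform} of the symbol, not the location of its $\xi_1$-support. For any sum $\sum_{\ell_1}c_{\ell_1}\varphi(\xi_1-\ell_1)$ one has
\[
\calF_{1}\Big[\sum_{\ell_1}c_{\ell_1}\varphi(\cdot-\ell_1)\Big](z_1)=\widehat\varphi(z_1)\sum_{\ell_1}c_{\ell_1}e^{-iz_1\cdot\ell_1},
\]
which is rapidly decreasing in $z_1$ since $\widehat\varphi\in\calS$; hence $\psi_{k_1}(D_{\xi_1})\sigma_A$ decays fast in $k_1$ \emph{uniformly in $A$}, and the $S^m_{0,0}(\vecs,\infty)$-norm carries no factor $2^{As_1}$. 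Localizing near $|\xi_1|\sim 2^A$ is not the same as making the symbol rough in $\xi_1$; you have conflated the two. Without that factor the inequality you feed into \eqref{sharpness-assump-s_j-t-n/pj} cannot force $s_1\ge n/p_1$. (The construction of Proposition~\ref{sharpnessC} you are trying to transplant really tests $s_0$: there it is the factor $e^{-ix\cdot\vecxi}$ with $|\vecxi|\sim 2^A$ that generates $x$-roughness.)

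The paper's argument is different and much simpler. It takes
\[
\sigma_A(x,\vecxi)=2^{-As_1}\,\psi(2^{-A}x)\,e^{-ix\cdot\xi_1}\,\varphi(\xi_1)\cdots\varphi(\xi_N),
\]
with $\psi$ supported in an annulus so that $|x|\sim 2^A$ on $\supp\psi(2^{-A}\cdot)$. The factor $e^{-ix\cdot\xi_1}$ is the whole point: each $\partial_{\xi_1}$ brings down $|x|\sim 2^A$, giving $|\partial_x^{\alpha_0}\partial_{\xi_1}^{\alpha_1}\cdots\partial_{\xi_N}^{\alpha_N}\sigma_A|\lesssim\langle x\rangle^{-s_1+|\alpha_1|}\langle\vecxi\rangle^m$, and interpolation as in \eqref{Taylor-theta0} then yields $\|\sigma_A\|_{S^m_{0,0}(\vecs,\infty)}\lesssim1$ uniformly in $A$. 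Testing against $f_1=\calF^{-1}\psi$ and $f_{j,A}=2^{-An/p_j}(\calF^{-1}\psi)(2^{-A}\cdot)$ for $j\ge2$ (each of $H^{p_j}$-norm $\approx1$), the operator evaluates explicitly to a nonzero multiple of $2^{-As_1}2^{-An(1/p_2+\cdots+1/p_N)}\psi(2^{-A}x)\{(\calF^{-1}\psi)(2^{-A}x)\}^{N-1}$, whose $L^p$-norm (respectively $BMO$-norm when $p=\infty$) is $\approx 2^{-A(s_1-n/p_1)}$. No Wainger functions and no case split on the $p_j$'s are needed.
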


\begin{proof}
We only prove $s_1 \ge n/p_1$
and the rest parts for $s_2,\dots,s_N$ follow by symmetry.

As stated in the proof of Proposition \ref{sharpnessC},
we may assume the assumption 
\eqref{sharpness-assump-s_j-t-n/pj} with $t=\infty$.
Take nonnegative functions $\varphi, \psi \in \calS(\R^n)$
satisfying $\int \varphi^2 \neq 0$ and
\begin{align} \begin{split} \label{psiphi} &
\supp \varphi \subset \{ \xi \in \R^n: |\xi| \le 2 \},
\quad
\varphi =1 \;\;\text{on}\;\; \{ \xi \in \R^n: |\xi| \le 1 \}, 
\\&
\supp \psi \subset \{ x \in \R^n: 1/2 \le |x| \le 2 \}.
\end{split} \end{align}
We set for $A \in \N$
\begin{align*}
&\sigma_{A} (x,\vecxi)
=2^{-As_1} \psi (2^{-A} x)
e^{-ix\cdot \xi_1}
\varphi(\xi_1)\dots\varphi(\xi_N),
\\&
{f_{1}}(x) =(\calF^{-1} \psi)(x),
\\&
{f_{j,A}}(x) =2^{-An/p_j} (\calF^{-1} \psi) (2^{-A} x),
\quad j = 2,\dots,N.
\end{align*}

Firstly we shall prove 
\begin{align}
&\label{sharpness-proof-s_j-1}
\| 2^{\veck\cdot\vecs}
\langle \vecxi \rangle^{-m}
\Delta_{\veck}\sigma_{A} (x,\vecxi)\|_{ L^{\infty}_{x,\vecxi} \ell^{\infty}_{\veck} }
\lesssim 1,
\\
&\label{sharpness-proof-s_j-2}
\| f_{1} \|_{H^{p_1}} 
\approx \| f_{j,A} \|_{H^{p_j}} 
\approx 1,
\quad
j=2,\dots,N,
\end{align}
for $A \in \N$.
By a scaling property of Hardy spaces
\eqref{sharpness-proof-s_j-2} obviously follows.
Let $L_j$ be a nonnegative integer
satisfying $L_j \ge s_j$ for $j=0,1,\dots,N$.
Observing that
\[
|\partial^{\alpha_0}_{x}\partial^{\alpha_1}_{\xi_1}
\dots
\partial^{\alpha_N}_{\xi_N}
\sigma_A(x,\vecxi)|
\le C_{\alpha_0, \alpha_1,\dots,\alpha_N}
\langle x \rangle^{-s_1+|\alpha_1|}
\langle \vecxi \rangle^{m},
\]
we see from the Taylor expansion that
\[
|\Delta_{\veck}\sigma_{A} (x,\vecxi)|
\lesssim
\begin{cases}
\langle x \rangle^{-s_1}
\langle \vecxi \rangle^{m} \; 
2^{-k_0L_0 -k_2L_2 - \dots - k_NL_N} ,
\\
\langle x \rangle^{-s_1+L_1}
\langle \vecxi \rangle^{m} \;
2^{-k_0L_0 -k_1L_1 -k_2L_2 - \dots -k_NL_N} ,
\end{cases}
\]
for any $\veck \in (\N_0)^{N+1}$.
As was done in \eqref{Taylor-theta0},
by taking $0\le \theta_1 \le 1$ such that $s_1=L_1\theta_1$,
\[
|\Delta_{\veck}\sigma_{A} (x,\vecxi)|
\lesssim
\langle \vecxi \rangle^{m}
2^{-k_0L_0 -k_1s_1 -k_2L_2 - \dots - k_NL_N},
\]
and thus, \eqref{sharpness-proof-s_j-1} follows
with the implicit constant independent of $A \in \N$.

From the condition of $\varphi$,
since 
$\psi(2^{A}\cdot)\varphi
=\psi(2^{A}\cdot)$ for $A\in\N$,
we have
\[
T_{\sigma_{A}}(f_1,f_{2,A},\dots,f_{N,A})(x)
=
C \,
2^{-As_1} \psi (2^{-A} x) \;
2^{-An(1/p_2 +\dots +1/p_N)}
\{ (\calF^{-1} \psi) (2^{-A} x) \}^{N-1}
\]
for $A \in \N$,
which implies, with the assumption $1/p=1/p_1+\dots+1/p_N$, that
\begin{align} \label{sharpness-proof-s_j-3}
\begin{split}
\| T_{\sigma_{A}}(f_1,f_{2,A},\dots,f_{N,A}) \|_{L^p}
\approx
2^{-As_1} 
2^{-An(1/p_2 +\dots +1/p_N)}
2^{An/p}
= 2^{-A(s_1-n/p_1)} ,
\end{split}
\end{align}
where we should use that $BMO$ is scaling invariant when $p=\infty$.

Thus, collecting 
\eqref{sharpness-proof-s_j-1}, 
\eqref{sharpness-proof-s_j-2},
\eqref{sharpness-proof-s_j-3},
and \eqref{sharpness-assump-s_j-t-n/pj} with $t=\infty$,
we see that
$2^{-A(s_1-n/p_1)} \lesssim 1$.
Since this holds for all $A \in \N$,
we obtain
$s_1 \ge n/p_1$,
which completes the proof.
\end{proof}

\begin{lem}\label{sharp-smoothness-s_j-2}
Let $N \ge 2$,  
$p, p_1,\dots, p_N \in (0, \infty]$,  
$1/p = 1/p_1 + \cdots + 1/p_N$,  
$s_0,s_1,\dots,s_N \in [0,\infty)$,
$t \in (0,\infty]$, and $m \in \R$.
Suppose that the estimate 
\begin{equation} \label{sharpness-assump-s_j-t-n/2}
\|T_{\sigma} 
\|_{ H^{p_1} \times \dots \times H^{p_N} \to L^p}
\lesssim 
\|\sigma\|_{ S^{m}_{0,0} (\vecs,t; \R^n, N) } 
\end{equation}
holds for all smooth functions $\sigma$ 
with the right hand side finite,
where $L^p$ is replaced by $BMO$ 
for $p=\infty$.
Then $s_j \ge n/2$, $j = 1,\dots,N$. 
\end{lem}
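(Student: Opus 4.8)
We sketch the approach, which follows the scheme of the proofs of Proposition~\ref{sharpnessC} and Lemma~\ref{sharp-smoothness-s_j-1}; closely related statements can be found in \cite{KMT-JPDOA, KMT-JMSJ}. We prove $s_1\ge n/2$, the assertions for $s_2,\dots,s_N$ following by symmetry, and, exactly as in those arguments, it is enough to treat the hypothesis \eqref{sharpness-assump-s_j-t-n/2} with $t=\infty$: once this case is settled, replacing $s_j$ by $s_j+\varepsilon$ for $j=0,1,\dots,N$ and letting $\varepsilon\downarrow 0$ yields the conclusion for every $t\in(0,\infty]$.

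The plan is then to construct, for each large $A\in\N$, a smooth symbol $\sigma_A$ and Schwartz functions $f_1,\dots,f_N$ with
\[
\|\sigma_A\|_{S^m_{0,0}(\vecs,\infty;\R^n,N)}\lesssim 1,\qquad
\|f_j\|_{H^{p_j}}\lesssim 1,\qquad
\|T_{\sigma_A}(f_1,\dots,f_N)\|_{L^p}\gtrsim 2^{A(n/2-s_1)-A\varepsilon}
\]
for every $\varepsilon>0$, with constants independent of $A$ (and with $H^{p_j}$, $L^p$ read as $bmo$, $BMO$ when an exponent equals $\infty$). Granting this, \eqref{sharpness-assump-s_j-t-n/2} with $t=\infty$ forces $2^{A(n/2-s_1-\varepsilon)}\lesssim 1$ for all $A$ and all $\varepsilon>0$, hence $s_1\ge n/2$.

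The symbol $\sigma_A$ is designed to exploit the fact that the frequency variables are measured only in $L^2_{ul}$ in the definition of $S^m_{0,0}(\vecs,t;\R^n,N)$: a ``spike'' of width $\sim 2^{-A}$ in the $\xi_1$-variable is charged only a factor $\sim 2^{-An/2}$ by $\|\cdot\|_{L^2_{ul,\vecxi}}$, while it forces the $\psi_{k_1}(D_{\xi_1})$-component with $k_1\sim A$ and hence contributes the full factor $2^{As_1}$; thus one can afford $\sigma_A$ of amplitude $\sim 2^{A(n/2-s_1)}$ and still have unit norm. Concretely I would take $\sigma_A(x,\vecxi)$ equal to such an amplitude times a fixed bump $\phi(x)$ in the $x$-variable (so that $k_0$ stays $O(1)$ and no $s_0$-contribution is incurred), times a sum over the lattice set $D_A=\{\ell\in\Z^n:2^{A-\delta}\le|\ell|\le 2^{A+\delta}\}$ of rescaled $\xi_1$-spikes carrying oscillating coefficients, times fixed bumps in $\xi_2,\dots,\xi_N$; the estimate $\|\sigma_A\|_{S^m_{0,0}(\vecs,\infty)}\lesssim 1$ would then follow from the Taylor-expansion argument already used in \eqref{Taylor-theta0} to bound $\Delta_{\veck}\sigma_A$ for the remaining $\veck$, together with the mutual disjointness of the spikes. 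The input $f_1$ would be a suitably rescaled Wainger-type function provided by Lemma~\ref{sharpness-lem}, whose Fourier transform matches the $\xi_1$-spikes of $\sigma_A$, and $f_2,\dots,f_N$ would be fixed bumps; evaluating $T_{\sigma_A}(f_1,\dots,f_N)$ and performing a periodicity/averaging argument would reduce the desired lower bound to a lower bound for the $L^p(\T^n)$-norm of a flat trigonometric polynomial formed from the oscillating coefficients, which is $\gtrsim |D_A|^{1/2}=2^{An/2-A\varepsilon}$ by Lemma~\ref{sharpness-lem} in the limit where the Wainger exponent tends to $1$ (this is precisely the mechanism by which the ``$n/2$''s enter in Proposition~\ref{sharpnessC}).

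The principal difficulty is the scale bookkeeping in this last step: the widths of the $\xi_1$-spikes, the profile of the coefficients, the (de)localization of $f_1,\dots,f_N$, and the Wainger exponent must be chosen so that $k_1\sim A$ while $k_0=O(1)$, so that the supports of $f_1,\dots,f_N$ genuinely resonate with $\sigma_A$ in the definition of $T_{\sigma_A}$, and so that all the spurious powers of $2^A$ generated by the rescalings and by the weight $\langle\vecxi\rangle^{\pm m}$ cancel, leaving exactly $2^{A(n/2-s_1)-A\varepsilon}$ — the same type of computation as in Proposition~\ref{sharpnessC}, now with the critical exponent $n/2$ routed through the $\xi_1$-slot rather than the $x$-slot. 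A secondary point is the lower bound for the $L^p(\T^n)$-norm of the flat trigonometric polynomial when $p<2$, where one cannot merely use $\|\cdot\|_{L^p}\ge\|\cdot\|_{L^2}$; this is supplied by Wainger's construction underlying Lemma~\ref{sharpness-lem}.
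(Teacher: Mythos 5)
Your central insight is correct: a $\xi_1$-spike of width $\sim 2^{-A}$ forces $k_1 \sim A$ in the Littlewood--Paley decomposition while its $L^2_{ul}$ cost is only $\sim 2^{An/2}$, and this mismatch is exactly what produces the threshold $s_1 \ge n/2$. However, the concrete construction you sketch is modeled on Proposition~\ref{sharpnessC} (sharpness of $s_0$), and that template is both unnecessary and noticeably harder to push through here. The paper's proof is much simpler: it takes a \emph{single} $\xi_1$-spike at the origin,
\[
\sigma_A(\vecxi)=\calF^{-1}[\psi(2^{-A}\cdot)](\xi_1)\,\varphi(\xi_2)\cdots\varphi(\xi_N),
\qquad
f_{j,A}(x)=(\calF^{-1}\psi)(2^{-A}x),
\]
so that $\|\sigma_A\|_{S^m_{0,0}(\vecs,\infty)}\lesssim 2^{A(s_1+n/2)}$, $\|f_{j,A}\|_{H^{p_j}}\approx 2^{An/p_j}$, and $T_{\sigma_A}(f_{1,A},\dots,f_{N,A})$ is computed \emph{exactly} as $2^{An}$ times a fixed dilated Schwartz profile, giving $\|T_{\sigma_A}(\cdots)\|_{L^p}\approx 2^{An+An/p}$; comparing exponents and using $1/p=\sum 1/p_j$ gives $s_1\ge n/2$ immediately. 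No lattice sum $D_A$, no oscillating Wainger coefficients, no Lemma~\ref{sharpness-lem}, and no averaging/periodicity argument is required. In particular, placing the spikes at frequency $|\ell_1|\sim 2^A$ as you propose would force the weight $\langle\vecxi\rangle^{-m}$ into the bookkeeping (the paper avoids this by keeping all the $\xi$-support near the origin), and the step you flag as the ``secondary point'' — a \emph{lower} bound for the $L^p$-norm of a flat trigonometric polynomial when $p<2$ — is not supplied by Lemma~\ref{sharpness-lem}, which only gives an \emph{upper} bound on the Wainger functions. In the paper's proof (as in Proposition~\ref{sharpnessC}) the $L^p$ lower bound on the output is trivial because $T_{\sigma_A}(\cdots)$ is an explicit constant multiple of a fixed nonzero Schwartz function (dilated), so $\|\cdot\|_{L^p}$ is a known power of $2^A$ for all $0<p\le\infty$. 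In short: the Wainger machinery is the right tool when the exponent $n/2$ must be extracted from a cardinality $|D_A|^{1/2}$ (as for $s_0$), but here it enters purely as the $L^2$-normalization of a single dilated bump, and a direct scaling computation suffices.
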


\begin{proof}
We only prove $s_1 \ge n/2$
and the rest parts for $s_2,\dots,s_N$ follow by symmetry.

We may assume the assumption 
\eqref{sharpness-assump-s_j-t-n/2} with $t=\infty$.
Let $\{ {\psi}_k^{\ast} \}_{k \in \N_0}$ be 
a radial Littlewood-Paley partition of unity on $\R^n$.
We take radial functions $\varphi, \psi \in \calS(\R^n)$
satisfying \eqref{psiphi}
and set for $A \in \N$
\begin{align*}&
\sigma_{A}(x,\vecxi)
=\sigma_{A}(\vecxi)
=\calF^{-1}[\psi (2^{-A} \cdot )](\xi_1)
\varphi(\xi_2) \dots \varphi(\xi_N), 
\\&
f_{j,A}(x) =
(\calF^{-1}\psi) (2^{-A}x),
\quad
j = 1,\dots,N.
\end{align*}

For these functions, the following hold:
\begin{align}
&\label{sharpness-proof-s_j-n/2-1}
\| \sigma_{A} \|_{ S^{m}_{0,0} (\vecs,\infty; \R^n, N) } 
\lesssim 2^{A(s_1+n/2)} ,
\\
&\label{sharpness-proof-s_j-n/2-2}
\| f_{j,A} \|_{H^{p_j}} 
\approx 2^{An/p_j},
\quad
j=1,\dots,N,
\end{align}
for $A \in \N$.
As in the previous proof
\eqref{sharpness-proof-s_j-n/2-2} is obvious.
Using the fact that
$\psi_{k_0}^{\ast}(D_x)[1]$ is equal to $1$ if $k_0=0$
and to $0$ if $k_0 \ge 1$, 
we have
\[
\Delta_{\veck}\sigma_{A} (x,\vecxi)
=
\calF^{-1}[ \psi_{k_1}^{\ast} \psi (2^{-A} \cdot )] (\xi_1)
\psi_{k_2}^{\ast}(D) \varphi (\xi_2)
\dots \psi_{k_N}^{\ast}(D) \varphi (\xi_N) .
\]
Here,
$\calF^{-1}[ \psi_{k_1}^{\ast} \psi (2^{-A} \cdot )]$ 
vanishes unless $|k_1 - A| \le 1$.
We have for sufficiently large $L_1>0$
\begin{align*}
| \calF^{-1}[ \psi_{k_1}^{\ast} \psi (2^{-A} \cdot )] (\xi_1) |
\lesssim 
2^{An} \langle 2^A \xi_1 \rangle^{-L_1} .
\end{align*}
Since $\varphi \in \calS(\R^n)$,
we have for sufficient large $M_{j} > 0 $ and $L_{j}>0$
\[
| \psi_{k_j}^{\ast}(D) \varphi (\xi_j) |
\lesssim
2^{-k_j M_j} \; \langle \xi_j \rangle^{-L_{j}},
\quad
j = 2,\dots,N .
\]
Hence, by the embedding $L^{2} \hookrightarrow L^{2}_{ul}$ and the inequality
$\langle \vecxi \rangle^{m} \lesssim \prod_{j=1}^{N} \langle \xi_j \rangle^{|m|}$, 
we see that
\begin{align*} &
\| \sigma_{A} \|_{ S^{m}_{0,0} (\vecs,\infty; \R^n, N) } 
\\&\lesssim
\sup_{ |k_1 - A| \le 1 } 
2^{ s_1 k_1} 
\| \langle \xi_1 \rangle^{|m|}
2^{An} \langle 2^{A} \xi_1 \rangle^{-L_1}
\|_{ L^2_{\xi_1} }
\prod_{j=2}^N
\sup_{k_j \in \N_0}
2^{ s_j k_j }
\| \langle \xi_j \rangle^{|m|}
2^{ -M_j k_j } \langle \xi_j \rangle^{-L_j}
\|_{ L^2_{\xi_j} }
\\&\lesssim
2^{A(s_1+n/2)} ,
\end{align*}
which gives \eqref{sharpness-proof-s_j-n/2-1}.
Moreover, since the conditions of $\varphi,\psi$ imply
$\psi(2^{A} \cdot) \varphi
=\psi(2^{A} \cdot)$, $A \in \N$,
\begin{align*}&
T_{\sigma_{A}}(f_{1,A},\dots,f_{N,A})(x)
=
2^{An} (\psi \ast \widehat\psi)(2^{-A}x)
\{ \widehat\psi(2^{-A}x) \}^{N-1} ,
\end{align*}
which implies that 
\begin{align} \label{sharpness-proof-s_j-n/2-3}
\| T_{\sigma_{A}}(f_{1,A},\dots,f_{N,A}) \|_{L^p}
\approx 
2^{An} \, 2^{An/p} ,
\end{align}
where we should use that $BMO$ is scaling invariant when $p=\infty$.

Thus, collecting 
\eqref{sharpness-proof-s_j-n/2-1}, 
\eqref{sharpness-proof-s_j-n/2-2},
\eqref{sharpness-proof-s_j-n/2-3},
and \eqref{sharpness-assump-s_j-t-n/2} with $t=\infty$
and using the assumption $1/p=1/p_1+\dots+1/p_N$,
we obtain
$s_1 \ge n/2$.
This completes the proof.
\end{proof}

The following immediately follows from 
Lemmas \ref{sharp-smoothness-s_j-1} and \ref{sharp-smoothness-s_j-2}.

\begin{cor} \label{sharp-smoothness-s_j}
Let $N \ge 2$,  
$p, p_1,\dots, p_N \in (0, \infty]$,  
$1/p = 1/p_1 + \cdots + 1/p_N$,  
$s_0,s_1,\dots,s_N \in [0,\infty)$,
$t \in (0,\infty]$, and $m \in \R$.
Suppose that the estimate 
\begin{equation*}
\|T_{\sigma} 
\|_{ H^{p_1} \times \dots \times H^{p_N} \to L^p}
\lesssim 
\|\sigma\|_{ S^{m}_{0,0} (\vecs,t; \R^n, N) } 
\end{equation*}
holds for all smooth functions $\sigma$ 
with the right hand side finite,
where $L^p$ is replaced by $BMO$ 
for $p=\infty$.
Then $s_j \ge \max\{n/p_j, n/2\}$, $j = 1,\dots,N$. 
\end{cor}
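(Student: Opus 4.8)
The plan is to deduce the two inequalities $s_j \ge n/p_j$ and $s_j \ge n/2$ separately, the first from Lemma \ref{sharp-smoothness-s_j-1} and the second from Lemma \ref{sharp-smoothness-s_j-2}, and then to combine them by taking the maximum. Fix $j \in \{1, \dots, N\}$.

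First I would record the elementary embedding $L^{\infty} \hookrightarrow L^{2}_{ul}$ with embedding constant at most $1$: on each translate of the unit cube the $L^{2}$-norm of a function does not exceed its $L^{\infty}$-norm. Inserting this into the quasi-norm of Definition \ref{def-main-thm-1}, one gets, for every smooth $\sigma$,
\begin{equation*}
\|\sigma\|_{ S^{m}_{0,0} (\vecs, t; \R^n, N) }
\le
\Big\| 2^{ \veck \cdot \vecs }\,
\big\| \langle \vecxi \rangle^{-m} \Delta_{\veck} \sigma(x,\vecxi) \big\|_{ L^{\infty}_{x,\vecxi} ((\R^n)^{N+1}) }
\Big\|_{ \ell^{t}_{\veck} ((\N_0)^{N+1}) } .
\end{equation*}
In particular, whenever the right-hand side of \eqref{sharpness-assump-s_j-t-n/pj} is finite, so is $\|\sigma\|_{ S^{m}_{0,0} (\vecs, t; \R^n, N) }$, and chaining the hypothesis of the corollary with the displayed inequality yields exactly the estimate \eqref{sharpness-assump-s_j-t-n/pj} assumed in Lemma \ref{sharp-smoothness-s_j-1}. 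That lemma then gives $s_j \ge n/p_j$.

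Second, the hypothesis of the corollary is literally the estimate \eqref{sharpness-assump-s_j-t-n/2} assumed in Lemma \ref{sharp-smoothness-s_j-2}, so that lemma applies verbatim and gives $s_j \ge n/2$. Combining the two bounds yields $s_j \ge \max\{ n/p_j, n/2 \}$, and since $j$ was arbitrary the corollary follows.

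There is essentially no obstacle at this stage: all the real content sits in the test-symbol and test-function constructions carried out in the proofs of Lemmas \ref{sharp-smoothness-s_j-1} and \ref{sharp-smoothness-s_j-2}. The only point deserving a word is the compatibility of the two sets of hypotheses, namely that the $L^{2}_{ul}$-based quasi-norm of Definition \ref{def-main-thm-1} is dominated by the crude $L^{\infty}$-based quantity figuring in Lemma \ref{sharp-smoothness-s_j-1}; this is precisely the embedding $L^{\infty} \hookrightarrow L^{2}_{ul}$ recorded above, and it guarantees that the corollary's hypothesis is at least as strong as each of the two lemmas' hypotheses.
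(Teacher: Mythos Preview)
Your proposal is correct and follows the paper's own approach exactly: the paper simply states that the corollary ``immediately follows from Lemmas \ref{sharp-smoothness-s_j-1} and \ref{sharp-smoothness-s_j-2}'', and you have spelled out the one routine observation (the embedding $L^{\infty} \hookrightarrow L^{2}_{ul}$) needed to see that the corollary's hypothesis implies that of Lemma \ref{sharp-smoothness-s_j-1}.
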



\section*{Acknowledgments}
The author sincerely expresses deep thanks 
to Prof. A. Miyachi and Prof. N. Tomita.
Although Proposition \ref{sharpnessC} in the first draft 
stated only the case $p_{j} \ge 1$,
Prof. Miyachi gave him ideas to develop it to the whole range $p_{j} > 0$.
Prof. Tomita pointed out to him that 
Theorem \ref{main-thm-2} holds for more improved symbol classes
as stated in Remark \ref{Remark-class}.



\end{document}